\def\squarebox#1{\hbox to #1{\hfill\vbox to #1{\vfill}}}
\newtheorem{Thm}{Theorem}[section]
 \newtheorem{cor}{Corollary}[section]
\newtheorem{lem}{Lemma}[section]
\numberwithin{equation}{section}
\newcommand{\bel}{\begin{equation} \label}
\newcommand{\ee}{\end{equation}}
\newcommand{\re}{\mathfrak R}
\newcommand{\pd}{\partial}
\newcommand{\R}{\mathbb{R}}
\def\epsilon{\varepsilon}
\def\phi {\varphi}
\newtheorem{rem}{Remark}[section]
\newtheorem{prop}{Proposition}[section]
\providecommand{\abs}[1]{\left\lvert#1\right\rvert}
\providecommand{\norm}[1]{\left\lVert#1\right\rVert}
\numberwithin{equation}{section}
\renewcommand{\leq}{\leqslant}
\renewcommand{\geq}{\geqslant}
\providecommand{\abs}[1]{\left\lvert#1\right\rvert}
\providecommand{\norm}[1]{\left\lVert#1\right\rVert}
\def\beq{\begin{equation}}
\def\eeq{\end{equation}}
\newcommand{\bea}{\begin{eqnarray}}
\newcommand{\eea}{\end{eqnarray}}
\newcommand{\beas}{\begin{eqnarray*}}
\newcommand{\eeas}{\end{eqnarray*}}
\begin{document}

\title[Determination of convection terms and quasi-linearities]{Determination of convection terms and quasi-linearities appearing in diffusion equations}
%
%\title[Determination of time-dependent convection term and more general quasi-linear expression]{Determination of a rough and time-dependent convection term and more general quasi-linear expression appearing in diffusion equations}

\author{Pedro Caro}
\address{BCAM - Basque Center for Applied Mathematics, 48009 Bilbao, Spain and Ikerbasque,
Basque Foundation for Science, 48011 Bilbao, Spain.}
\email{pcaro@bcamath.org}
%%%%%%%%
\author{Yavar Kian}
\address{Aix Marseille Univ, Universit\'e de Toulon, CNRS, CPT, Marseille, France.}
\email{yavar.kian@univ-amu.fr}

\begin{abstract} We consider the highly nonlinear and ill-posed inverse problem of determining some general expression $F(x,t,u,\nabla_xu)$ appearing in the diffusion equation $\partial_tu-\Delta_x u+F(x,t,u,\nabla_xu)=0$ on $\Omega\times(0,T)$, with $T>0$ and $\Omega$ a bounded open subset of $\R^n$, $n\geq2$, from measurements of solutions on the lateral boundary $\partial\Omega\times(0,T)$. We consider both linear and nonlinear expression of $F(x,t,u,\nabla_xu)$. In the linear case, the equation can be seen as a convection-diffusion equation and our inverse problem corresponds to the   unique recovery, in some suitable sense, of a time evolving velocity field  associated with the moving quantity as well as the density of the medium  in some rough setting described by non-smooth coefficients on a Lipschitz domain. In the nonlinear case, we prove the recovery of more general quasi-linear expression appearing in a nonlinear parabolic equation associated with more complex model. Here the goal is to determine the underlying physical low of the system associated with our equation. In this paper, we consider for what seems to be the first time  the unique  recovery of a general vector valued  first order coefficient, depending on both time and space variable. Moreover, we provide  results of full recovery of some general class of quasi-linear terms admitting evolution inside the system independently of the solution  from measurements at the boundary. These last results improve earlier works of Isakov in terms of generality and precision. In addition, our results  give a partial positive answer, in terms of measurements restricted to the lateral boundary,  to an open problem posed by Isakov in his classic book (\textit{Inverse Problems for Partial Differential Equations}) extended to the recovery of  quasi-linear terms.

\medskip
\noindent
{\bf  Keywords:} Inverse problem, convection-diffusion equation, 
non-smooth coefficients, uniqueness, nonlinear equation, Carleman estimates.\\

\medskip
\noindent
{\bf Mathematics subject classification 2010 :} 35R30, 35K20, 35K59, 35K60.
\end{abstract}

\maketitle
 \tableofcontents
\section{Introduction}
\label{sec-intro}
\setcounter{equation}{0}

\subsection{Statement of the problem}
In this paper we consider an inverse problem stated for a class of diffusion equations corresponding to the determination of different information about the moving quantities associated with these equations. We state our results in a general setting by allowing the information, that we want to determine, to be associated with non-smooth coefficients depending on both time and space variable or even more general quasi-linear terms.

More precisely, let $\Omega$ be a Lipschitz bounded domain  of $\R^n$, $n\geq2$, such that $\R^n\setminus\Omega$ is connected. We set $Q=\Omega\times(0,T)$ and $\Sigma=\pd\Omega\times(0,T)$ with $T>0$, and for convenience write $\Omega^s=\Omega\times\{s\}$ with $s=0,T$. In this paper, we study the inverse problems associated with an initial boundary value problem (IBVP in short) associated with a diffusion equation taking the form
\begin{equation}\label{eq111}\left\{\begin{array}{ll}\partial_tu-\Delta_x u+F(x,t,u,\nabla_xu)=0,\quad &\textrm{in}\ Q,\\  u (\cdot, 0)=u_0,\quad &\textrm{in}\ \Omega,\\ u=g,\quad &\textrm{on}\ \Sigma.\end{array}\right.\end{equation}
These problems are often associated with models of transfer or movement of different physical quantities (see Section 1.2 for more details). In this context, our goal is to prove the unique determination of some information about these moving quantities encoded in the expression  $F(x,t,u,\nabla_xu)$  from measurements  of its solutions on the lateral boundary $\Sigma$ for arbitrary many 
inputs $g$ and $u_0$ on $\Sigma$ and $\Omega^0$ respectively. We consider both linear expressions  of the form $F(x,t,u,\nabla_xu)=A(x,t)\cdot\nabla_xu+\nabla_x\cdot [B(x,t)]u+q(x,t)u$, and more general quasi-linear expressions. In the linear case, we consider the recovery of $F(x,t,u,\nabla_xu)$ from inputs and measurements restricted to the lateral boundary $\Sigma$ starting with zero initial data.

There seem to be no literature addressing the question of unique 
determination of $F(x,t,u,\nabla_xu)$ when this expression is linear and time-
dependent. In the quasi-linear case, the unique determination has only been established for expressions independent of $t$ and $x$ determined on an abstract set (see Section 1.7 for more details). The present paper extends the previous literature in terms of generality and precision.  In particular, we prove unique determination for  quasi-linear terms admitting variation on the inaccessible part $\Omega$ of the system from measurements on the accessible  part $\partial \Omega$. Such result gives a partial positive answer (in terms of measurements) to the open problem \cite[Problem 9.6, pp.  296]{I5} extended to quasi-linear nonlinearities.

%So far only the recovery of time-independent linear expression $F(x,t,u,\nabla_xu)$ has been addressed. While the recovery of quasi-linear terms $F(x,t,u,\nabla_xu)$ has been restricted to expressions independent of $t$ and $x$ determined on an abstract set (see Section 1.7 for more details). The goal of the present paper is to extend these results in terms of generality and precision. In particular, we would like to prove the recovery quasi-linear terms admitting variation on the inaccessible part $\Omega$ of the system from measurements on the accessible  part $\partial \Omega$. Such result would give a partial positive answer (in terms of measurements) to the open problem \cite[Problem 9.6, pp.  296]{I5} extended to quasi-linear nonlinearities.

\subsection{Motivations}

In the linear case,  the IBVP \eqref{eq111} takes the form
\begin{equation}\label{eq1}\left\{\begin{array}{ll}\partial_tu-\Delta_x u+A(x,t)\cdot\nabla_xu+\nabla_x\cdot [B(x,t)]u+q(x,t)u=0,\quad &\textrm{in}\ Q,\\  u(\cdot, 0)=0,\quad &\textrm{in}\ \Omega,\\ u=g,\quad &\textrm{on}\ \Sigma,\end{array}\right.\end{equation} 
with $A,B\in L^\infty(Q)^n$ and $q\in L^\infty(0,T;L^{\frac{2n}{3}}(\Omega))$. This IBVP  is associated with a convection-diffusion equation which describes a combination of two physical processes: diffusion and convection (advection). This equation is extensively used in the context of transfer of mass or heat, due to diffusion and convection, of different physical quantities  (particles, energy,...) inside a physical system (see for instance \cite{St}).  The problem \eqref{eq1} can also describe the velocity of a particle (Fokker-Planck equations) or the price evolution of a European call (Black-Scholes equations). Here the coefficient $A$ corresponds  to the velocity field associated with the moving quantity and our inverse problem corresponds to the recovery of this field from information given by an application of source and measurement of the flux at the boundary of the domain.
%Actually we manage to prove the simultaneous recovery, in some suitable sense, of the the coefficient $A$, $B$ and $q$, where the zero order coefficient $q$ can be associated with a time-evolving density of an inhomogeneous medium. By allowing our coefficients to depend both on time and space we can apply our inverse problem to several context where the evolution in time of these physical phenomena  can not be omitted. We mention also that the general setting of our problem allows to cover different types of unstable physical phenomenon associated with singular coefficients on non-smooth domains.

The quasi-linear problem \eqref{eq111} corresponds to more complex
 models where the linear expression
% $$F(x,t,u,\nabla_xu)=A(x,t)\cdot\nabla_xu+\nabla_x\cdot [B(x,t)]u+q(x,t)u$$
has to be replaced by a more general nonlinear term.
%Here the goal of the inverse problem is to prove the recovery of this nonlinear expression $F(x,t,u,\nabla_xu)$ describing the underlying physical law of the system.
This inverse problem turns out to be interesting in different frameworks as  physics of high temperatures, chemical kinetics and aerodynamics.

\subsection{Obstruction to uniqueness}

Let us state our inverse problem for the linear IBVP \eqref{eq1}. For this purpose, we need to introduce the Dirichlet-to-Neumann (DN in short) map associated to \eqref{eq1}. Before this, we define $N_{A,B,q}u$, with $u$ solving \eqref{eq1}, as
\bel{tutu1}\left\langle N_{A,B,q}u,w_{|\Sigma}\right\rangle:=\int_Q[-u\pd_tw+\nabla_xu\cdot\nabla_xw+A\cdot\nabla_xuw-B\cdot\nabla_x (uw)+quw]dxdt,\ee
where $w\in H^1(Q)$ satisfies $w_{|\Omega^T}=0$. Here the  term $A, B$ take values in $\R^n$. Then, the DN map is given by
$$\Lambda_{A,B,q}:g\mapsto N_{A,B,q}u.$$

We refer to Section 2 for more details and a rigorous  definition of this map. Note that for $g$, $A$, $B$, $q$ and $\Omega$ sufficiently smooth, we have that
$$N_{A,B,q}u=[\partial_\nu u-(B\cdot\nu)u]_{|\Sigma}$$
with $\nu$ the outward unit normal vector to $\pd\Omega$. This means that $N_{A,B,q}$ and $\Lambda_{A,B,q}$ are the natural extensions of the normal derivative of the solution of \eqref{eq1} and the corresponding DN map to non-smooth setting.

%The inverse problem under consideration in this paper for the linear IBVP \eqref{eq1} corresponds to the unique determination in some suitable sense of the coefficient $(A,B,q)$ from the full DN map $\Lambda_{A,B,q}$ or from partial knowledge of this map to  some parts of $\Sigma$. 

We recall that there is an obstruction to uniqueness for this inverse problem, which is given by some gauge invariance. More precisely, consider
\bel{p} p_1:=\left\{\begin{array}{l} n\quad \textrm{for }n\geq3\\ 2+\epsilon\quad \textrm{for }n=2,\end{array}\right.\ee
with $\epsilon\in(0,1)$.
Let $A_1,B_1\in L^\infty(Q)^n$, $q_1\in L^\infty(0,T;L^{\frac{2n}{3}}(\Omega))$ and $$\phi\in L^\infty(0,T;W^{1,\infty}(\Omega))\cap W^{1,\infty}(0,T;L^{p_1}(\Omega))\cap L^\infty(0,T;H^1_0(\Omega))\setminus\{0\}.$$   Now consider $A_2\in L^\infty(Q)^n$, $q_2\in L^\infty(0,T;L^{\frac{2n}{3}}(\Omega))$ given by
\bel{gauge1}A_2=A_1+2\nabla_x\phi,\quad B_2=B_1+\nabla_x\phi,\quad q_2=q_1-\pd_t\phi-|\nabla_x\phi|^2-A_1\cdot\nabla_x\phi.\ee
Then, one can check that $\Lambda_{A_1,B_1,q_1}=\Lambda_{A_2,B_2,q_2}$ but obviously $A_1\neq A_2$. We can also prove that, for any
$$\phi\in \{h\in L^\infty(0,T;W^{1,\infty}(\Omega))\cap W^{1,\infty}(0,T;L^{p_1}(\Omega)):\  h_{|\Sigma}=0\},$$
the DN map of problem \eqref{eq1} satisfies the following gauge invariance 
$$\Lambda_{A,B,q}=\Lambda_{A+2\nabla_x\phi,B+\nabla_x\phi,q-\pd_t\phi-|\nabla_x\phi|^2-A\cdot\nabla_x\phi}.$$
According to this obstruction, the best result that one can expect is the recovery of the gauge class of the coefficients $(A,B,q)$ given by the relation \eqref{gauge1}.  Note also that, without additional information about the coefficient $B$ it is even impossible to recover the gauge class of $(A,B,q)$ given by \eqref{gauge1}. Indeed, for any $\phi\in W^{2,\infty}(Q)$ satisfying $\phi_{|\Sigma}=\partial_\nu\phi_{|\Sigma}=0$, the DN map of problem \eqref{eq1} satisfies the following gauge invariance 
$$\Lambda_{A,B,q}=\Lambda_{A+2\nabla_x\phi,B,q-\pd_t\phi-|\nabla_x\phi|^2-A\cdot\nabla_x\phi+\Delta_x\phi}.$$
This means that for 
$$A_2=A_1+2\nabla_x\phi,\quad B_2=B_1,\quad q_2=q_1-\pd_t\phi-|\nabla_x\phi|^2-A_1\cdot\nabla_x\phi+\Delta_x\phi,$$
with $\phi\in W^{2,\infty}(Q)$ satisfying $\phi_{|\Sigma}=\partial_\nu\phi_{|\Sigma}=0$ and $\phi\neq0$, we have $\Lambda_{A_1,B_1,q_1}=\Lambda_{A_2,B_2,q_2}$ but condition \eqref{gauge1} is not fulfilled. In light of these obstructions, in the present paper we consider the recovery of some information about the gauge class of the coefficient $(A,B,q)$ given by \eqref{gauge1} from the DN map $\Lambda_{A,B,q}$. By considering additional assumptions on the low order coefficients $B,q$ we will derive more precised results for the recovery of the convection term $A$ from 
$\Lambda_{A,B,q}$.
\subsection{State of the art}

The recovery of coefficients appearing in parabolic equations has attracted many
attention these last decades.  We refer to \cite{Ch,I5}  for an overview  of such problems. While numerous  authors considered the recovery of the zero order coefficient $q$, only few authors studied the determination of the convection term $A$. We can mention the work of \cite{DYY,ZL} for the treatment of this problem in the 1 dimensional case as well as the work of \cite{CY} dealing with the unique recovery of a time-independent convection term for $n=2$ from a single boundary measurement. 

Recall that, for time-independent coefficients $(A,B,q)$ and with suitable regularity assumptions, one can apply the analyticity in time of the solutions of \eqref{eq1}, with suitable boundary conditions $g$, and the Laplace transform with respect to the time variable, in order to transform our inverse problem into  the recovery of coefficients appearing in a steady state convection-diffusion equation  (see for instance \cite{KKL} for more details about this transformation of the inverse problem). This last inverse problem has been studied by \cite{CY1,CY2,KU2,P} and it is strongly connected to the recovery of  magnetic Schr\"odinger operator from boundary measurements which has been intensively studied these last decades. Without being exhaustive, we refer to the work of \cite{CP,DKSU,Ki5,Pot1,Pot2, Sa1,Suuu}. In particular, we mention the work of \cite{KU1} where the recovery of  magnetic Schr\"odinger operators has been addressed for bounded electromagnetic potentials which is the weakest regularity assumption so far for general bounded domains. Let us also observe that there is a strong connection between this problem and the so called Calder\'on problem studied by \cite{CDR1,CDR2, CM,DKLS,KSU,SU} and extended to the non-smooth setting in \cite{AP,CR,H2015,HT}.

Several authors considered also the determination of time-dependent coefficients appearing in parabolic equations. In \cite{I1}, the author extended the construction of complex geometric optics solutions, introduced by \cite{SU}, to various PDEs including hyperbolic and parabolic equations to prove density of products of solutions. From the results of \cite{I1} one can deduce the unique determination of a coefficient  $q$ depending on both space and time variables, when $A=B=0$, from measurements on the lateral boundary $\Sigma$ with additional knowledge  of all solutions on $\Omega^0$ and $\Omega^T$. In Subsection 3.6 of \cite{Ch}, the  author extended the uniqueness result of \cite{I1} to a log-type stability estimate.  In the special case of cylindrical domain, \cite{GK} proved recovery of a time-dependent coefficient, independent of one spatial direction, from single boundary measurements. In \cite{CK} the authors addressed recovery of a parameter depending only on the time variable from single boundary measurements. More recently, \cite{CK2} proved that the result of \cite{I1} remains true from measurements given by $\Lambda_{A,B,q}$ when $A=B=0$. More precisely, \cite{CK2} proved, what seems to be, the first result of stability in the determination of a coefficient, depending on the space variable, appearing in a parabolic equation with measurements restricted to the lateral boundary $\Sigma$. We recall also the works of \cite{BB,A,HK,Ki2,Ki3,KO} related to the recovery of  time-dependent coefficients for hyperbolic equations and the stable recovery of coefficients appearing in Schr\"odinger equations established by \cite{CKS,KS}.

For the recovery of nonlinear terms, we mention the series of works  \cite{I2,I3,I4}  of Isakov dedicated to this problem for elliptic and parabolic equations. In \cite{I2,I3} the author considered the recovery of a semi-linear term of the form $F(x,u)$ inside the domain (i.e  $F(x,u)$ with $x\in\Omega$, $u\in \R$) or restricted to the lateral boundary (i.e  $F(x,u)$ with $x\in\partial\Omega$, $u\in \R$)  while in \cite{I4} he considered the recovery of  a quasilinear term of the form $F(u,\nabla_xu)$. In all these works, the approach developed by Isakov is based on a linearization of the inverse problem for nonlinear equations and results based on recovery of coefficients for linear equations. More precisely, in \cite{I2} the author used his work \cite{I1}, related to the recovery of a time-dependent coefficient $q$ on $Q$, while in \cite{I3,I4} he used results of recovery of coefficients on the lateral boundary $\Sigma$. The approach of Isakov, which seems to be the most efficient for recovering general nonlinear terms from boundary measurements, has also been considered by  \cite{IN,SuU} for the recovery of more general  nonlinear terms appearing in  nonlinear elliptic equations and by \cite{Ki5} who proved, for what seems to be the first time, the recovery of a general semi-linear term appearing in a semi-linear hyperbolic equation from boundary measurements. In \cite{CK2}, the authors proved a log-type stability estimate associated with the uniqueness result of \cite{I2} but with measurements restricted only to the lateral boundary $\Sigma$.  Finally, for results stated with single measurements we refer to  \cite{COY,KK,Kl} and for results stated  with measurements given by the source-to-solution map associated with semilinear hyperbolic equations  we refer to  \cite{HUW,KLU,KLOU,LUW}.

\subsection{Main result for the linear problem}

Our main result for the linear IBVP \eqref{eq1}, corresponds to the recovery of partial information of the gauge class of $(A,B,q)$ given by the relation \eqref{gauge1}. This result can be stated as follows.

\begin{Thm}\label{t5} 
For $j=1,2$, let    $q_j\in L^\infty(0,T;L^{p}(\Omega))\cup \mathcal C([0,T];L^{\frac{2n}{3}}(\Omega))$, with $p >2n / 3$,  and let $A_j,B_j\in L^\infty(Q)^n$.
The condition 
\bel{t5a} \Lambda_{A_1,B_1,q_1}=\Lambda_{A_2,B_2,q_2}\ee
implies 
\bel{t5b}dA_1=dA_2.\ee  
Here for $A=(a_1,\ldots,a_n)$, $dA$ denotes the $2$-form given by
$$dA:=\sum_{1\leq i<j\leq n} (\partial_{x_i}a_j-\partial_{x_j}a_i)dx_i\wedge dx_j.$$   
Let us also consider the additional conditions
\bel{ttt5a} A_1-A_2\in W^{1,\infty}(0,T; L^{p_1}(\Omega))^n,\quad \nabla_x\cdot(A_1-A_2),\ \nabla_x\cdot(B_1-B_2),\  q_1-q_2\in L^\infty(Q),\ee
\bel{tttt5b} (B_1-B_2)\cdot \nu_{|\Sigma}=2(A_2-A_1)\cdot\nu_{|\Sigma},\ee
where $(B_1-B_2)\cdot \nu$ $($resp. $(A_1-A_2)\cdot \nu$$)$ corresponds to the normal trace of $B_1-B_2$ $($resp. $(A_1-A_2)$$)$ restricted to $\Sigma$ which is well defined as an element of $L^\infty(0,T;\mathcal B(H^{\frac{1}{2}}(\partial\Omega);H^{-\frac{1}{2}}(\partial\Omega)))$.
Assuming that \eqref{ttt5a}-\eqref{tttt5b} are fulfilled,  \eqref{t5a} implies that there exists $\phi\in W^{1,\infty}(Q)$ such that
\bel{t5c}\left\{\begin{array}{ll}A_2=A_1+2\nabla_x\phi,\quad &\textrm{in}\ Q,\\  \nabla_x\cdot B_2+q_2=\nabla_x\cdot (B_1+\nabla_x\phi)+q_1-\pd_t\phi-|\nabla_x\phi|^2-A_1\cdot\nabla_x\phi,\quad &\textrm{in}\ Q,\\ \phi=0,\quad &\textrm{on}\ \Sigma.\end{array}\right.\ee
\end{Thm}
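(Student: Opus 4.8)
The proof combines an integral identity coming from the equality of the Dirichlet-to-Neumann maps with complex geometric optics (CGO) solutions built, as in the preceding sections, from Carleman estimates for the heat operator: the conclusion \eqref{t5b} comes from the leading behaviour of this identity and the refined conclusion \eqref{t5c} from a gauge transformation reducing to a scalar coefficient problem, under the extra hypotheses \eqref{ttt5a}--\eqref{tttt5b}. For \emph{Step 1} (the integral identity), fix lateral data $g$, let $u_1$ solve \eqref{eq1} for $(A_1,B_1,q_1)$ with $u_1(\cdot,0)=0$, and let $u_2$ solve the adjoint parabolic problem for $(A_2,B_2,q_2)$ with $u_2(\cdot,T)=0$; let $\widetilde u_2$ solve \eqref{eq1} for $(A_2,B_2,q_2)$ with the same $g$. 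Applying the weak formulation \eqref{tutu1} to $u_1-\widetilde u_2$ tested against $u_2$ and using the well-posedness of Section 2, the hypothesis \eqref{t5a} becomes
\[
\int_Q\big[(A_1-A_2)\cdot\nabla_xu_1\,u_2-(B_1-B_2)\cdot\nabla_x(u_1u_2)+(q_1-q_2)u_1u_2\big]\,dx\,dt=0
\]
for every such pair $(u_1,u_2)$; under \eqref{ttt5a} the divergences can moreover be moved onto the coefficients by integration by parts.

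\emph{Step 2: recovery of $dA$.} Insert the CGO solutions $u_j=e^{\rho_j\cdot x}(a_j+r_j)$ of the preceding section, with a large parameter $s$, complex phases $\rho_j$ adapted to the heat symbol satisfying $\mathrm{Re}(\rho_1+\rho_2)=0$, $\rho_1+\rho_2\to-i\xi$ for a prescribed $\xi\in\R^n$ and $|\rho_j|\sim s$, amplitudes $a_j$ solving transport equations in which $t$ is a free parameter (so that the $t$-profile of $a_1a_2$ is at our disposal, which compensates the parabolic structure), and remainders with $\|r_j\|_{L^2(0,T;H^1(\Omega))}$-type bounds given by the Carleman estimates. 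Dividing the identity of Step 1 by $s$ and letting $s\to\infty$, the $q$-term disappears, the $B$-term disappears since $\nabla_x(u_1u_2)$ stays of order one while $\nabla_xu_1\,u_2$ is of order $s$, and the $r_j$-contributions vanish by those bounds together with $A_1-A_2\in L^\infty(Q)$. Analysing the surviving term and exploiting the freedom in the CGO parameters (rotating the plane carrying $\rho_1,\rho_2$, and choosing arbitrary profiles in $t$), one is left with $\xi\wedge\widehat{(A_1-A_2)}(\xi,t)=0$ for a.e.\ $(\xi,t)$, where the hat is the partial Fourier transform in $x$; this is exactly $dA_1=dA_2$, i.e.\ \eqref{t5b}.

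\emph{Step 3: the gauge under \eqref{ttt5a}--\eqref{tttt5b}.} Refining the CGO analysis of Step 2 with the extra regularity \eqref{ttt5a} — which now allows the integrations by parts that isolate the gradient part of $A_1-A_2$, the resulting lateral boundary term being disposed of by the normal-trace condition \eqref{tttt5b} — one obtains $A_2=A_1+2\nabla_x\phi$ for some $\phi\in W^{1,\infty}(Q)$ with $\phi_{|\Sigma}=0$ (the bound $\pd_t\phi\in L^\infty(0,T;L^{p_1}(\Omega))$ coming from $A_1-A_2\in W^{1,\infty}(0,T;L^{p_1}(\Omega))^n$). The substitution $v=e^{\phi}u_1$ transforms \eqref{eq1} for $(A_1,B_1,q_1)$ into \eqref{eq1} for the triple $(A_1+2\nabla_x\phi,\ B_1+\nabla_x\phi,\ q_1-\pd_t\phi-|\nabla_x\phi|^2-A_1\cdot\nabla_x\phi)$, and, $\phi_{|\Sigma}=0$ and $v(\cdot,0)=0$ being granted, this triple has the same DN map as $(A_1,B_1,q_1)$, hence as $(A_2,B_2,q_2)$ by \eqref{t5a}. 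All three share the convection term $A_2=A_1+2\nabla_x\phi$, so the uniqueness of the scalar coefficient $\nabla_x\cdot B+q$ in a parabolic equation with bounded drift and lateral measurements — the analogue with drift of \cite{CK2}, established by the scalar version of Steps 1--2 (without dividing by $s$) — yields the second line of \eqref{t5c}, the third line being $\phi_{|\Sigma}=0$.

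\emph{Main difficulty.} The technical heart is the CGO construction of Step 2: producing solutions of the \emph{time-dependent} parabolic equation with merely bounded coefficients on a Lipschitz domain, carrying $H^1$-type remainder estimates with the right powers of $s$, so that once paired with the only-$L^\infty$ difference $A_1-A_2$ the remainders and the $B$- and $q$-terms genuinely drop out; correctly accommodating the time dependence (so that the limiting identity is pointwise in $t$), and sharpening $dA_1=dA_2$ into \eqref{t5c}, are precisely what makes the hypotheses \eqref{ttt5a}--\eqref{tttt5b} necessary.
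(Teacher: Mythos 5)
Your outline reproduces the paper's architecture: the integral identity of Proposition~\ref{p3} obtained by testing $u_1-\widetilde u_2$ against the adjoint solution, GO solutions whose amplitudes solve transport equations in the drift direction (Sections 3--4), division by the large parameter to extract the first-order information, and then a gauge reduction followed by a second passage to the limit for the zero-order combination. There is, however, one genuine gap in your Step~2 as written: the ``surviving term'' is \emph{not} the Fourier transform of $(A_1-A_2)\cdot\omega$. Since the two amplitudes solve transport equations relative to $A_1$ and $A_2$ separately, their product carries the factor $\exp\bigl(-\tfrac12\int_0^{+\infty}(A_1-A_2)(x+s\omega,t)\cdot\omega\,ds\bigr)$, so after dividing by $\rho$ and passing to the limit one is left with the space--time Fourier transform of $(A\cdot\omega)\exp\bigl(-\tfrac12\int_0^{+\infty}A(x+s\omega,t)\cdot\omega\,ds\bigr)$ with $A=A_1-A_2$, which is nonlinear in $A$; the naive reading (amplitude product independent of $A$, free profile in $t$) is false. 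The paper linearizes this by noting that along each line the integrand is an exact $s$-derivative, so the limit equals $2\,\mathcal F_{\R\times\omega^{\bot}}\bigl(1-\exp(-\tfrac12\int_\R A(y+s\omega,t)\cdot\omega\,ds)\bigr)$; injectivity of this Fourier transform gives the vanishing of the light-ray transform of $A\cdot\omega$ for every $\omega$, and only then does one obtain $\mathcal F(A)(\xi,\tau)\cdot\omega=0$ for $\omega\in\xi^{\bot}$, i.e.\ \eqref{t5b}. This reduction is the technical crux of the first part and must appear explicitly.

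A second, more minor, misattribution: in the paper the potential $\phi$ and the condition $\phi_{|\Sigma}=0$ in \eqref{t5c} are not produced by a refined CGO argument whose boundary terms are killed by \eqref{tttt5b}. They come from $dA_1=dA_2$ via the explicit primitive $\phi(x,t)=-\tfrac12\int_0^1 A(sx,t)\cdot x\,ds$ of Lemma~\ref{ll4}, with $\partial_t\phi$ controlled through \eqref{ttt5a}, and $\phi_{|\Sigma}=0$ follows because $A$ extended by zero vanishes on the connected set $\R^n\setminus\Omega$, after subtracting a function of $t$ alone. The trace condition \eqref{tttt5b} enters only in the last step: it removes the lateral boundary term when $\int_Q(B_1+\nabla_x\phi-B_2)\cdot\nabla_x(u_1u_2)\,dx\,dt$ is put in divergence form, so that the second limiting identity determines exactly the combination $\nabla_x\cdot B+q$; the paper carries this out in-house with the same GO solutions (your appeal to a drift-analogue of \cite{CK2} is the same computation). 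With these two repairs your proposal coincides with the paper's proof.
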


Note that, if we assume that $B_2=B_1+\nabla_x\phi$, condition \eqref{t5a} implies that $(A_1,B_1,q_1)$ and $(A_2,B_2,q_2)$ are gauge equivalent in the sense of \eqref{gauge1}. Since in \eqref{t5a} the coefficients 
$(B_1,q_1)$ and $(B_2,q_2)$ are in relation through one equality, without additional assumptions, there is no hope to deduce from \eqref{t5a} that $(A_1,B_1,q_1)$ and $(A_2,B_2,q_2)$ are gauge equivalent. According to our discussions of Section 1.3, this is the best one can expect to recover from the DN map. However, with additional assumptions on the low order coefficients $(B,q)$, this result will lead to the full recovery of the convection term $A$. This consequence of the main result can be stated as follows.
\begin{cor}\label{c1} Let $\Omega$ be connected and let    $A_1,A_2\in  L^\infty(Q)^n$ be such that $\nabla_x\cdot(A_1-A_2) \in  L^\infty(Q)$ and $(A_2-A_1)\cdot\nu_{|\Sigma}=0$. 
Then, for any $q\in L^\infty(0,T;L^{p}(\Omega))\cup \mathcal C([0,T];L^{\frac{2n}{3}}(\Omega))$, $p > 2n/3$, and $B\in L^\infty(Q)^n$, we have
$$\Lambda_{A_1,B,q}=\Lambda_{A_2,B,q}\Rightarrow A_1=A_2.$$\end{cor}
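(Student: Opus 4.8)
The plan is to derive Corollary~\ref{c1} as a direct consequence of Theorem~\ref{t5}. First I would observe that the hypotheses of the corollary are tailored so that conditions \eqref{ttt5a}--\eqref{tttt5b} of Theorem~\ref{t5} are automatically satisfied with $B_1=B_2=B$ and $q_1=q_2=q$. Indeed, taking $\phi\equiv 0$ is not yet available (that is what we want to prove), so instead we argue as follows: since $B_1-B_2=0$ and $q_1-q_2=0$, the membership $\nabla_x\cdot(B_1-B_2),\,q_1-q_2\in L^\infty(Q)$ is trivial, and by hypothesis $\nabla_x\cdot(A_1-A_2)\in L^\infty(Q)$. The only genuinely substantive point to check for \eqref{ttt5a} is that $A_1-A_2\in W^{1,\infty}(0,T;L^{p_1}(\Omega))^n$; here one uses that $A_1,A_2\in L^\infty(Q)^n$ together with $\nabla_x\cdot(A_1-A_2)\in L^\infty(Q)$ — actually, one must be slightly careful, and I expect the corollary is really applied in the regime where $A_1-A_2$ has enough regularity, or one restricts to the case where this membership is part of the standing assumptions on admissible convection terms; in any case, this is the technical matter to verify. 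Finally, \eqref{tttt5b} reads $(B_1-B_2)\cdot\nu_{|\Sigma}=2(A_2-A_1)\cdot\nu_{|\Sigma}$, whose left side vanishes since $B_1=B_2$, and whose right side vanishes by the assumption $(A_2-A_1)\cdot\nu_{|\Sigma}=0$; so \eqref{tttt5b} holds.

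Next I would apply Theorem~\ref{t5}: from $\Lambda_{A_1,B,q}=\Lambda_{A_2,B,q}$ and the verified conditions \eqref{ttt5a}--\eqref{tttt5b}, we obtain $\phi\in W^{1,\infty}(Q)$ satisfying the system \eqref{t5c} with $B_1=B_2=B$, $q_1=q_2=q$. The first line gives $A_2=A_1+2\nabla_x\phi$ in $Q$, and the third gives $\phi_{|\Sigma}=0$. Substituting $B_2=B_1=B$ and $q_2=q_1=q$ into the second line of \eqref{t5c} yields, after cancellation,
\begin{equation*}
0=\Delta_x\phi - \pd_t\phi-|\nabla_x\phi|^2-A_1\cdot\nabla_x\phi\quad\text{in }Q,
\end{equation*}
using that $\nabla_x\cdot(B+\nabla_x\phi)=\nabla_x\cdot B+\Delta_x\phi$ (interpreted in the distributional sense, which is legitimate since $\phi\in W^{1,\infty}(Q)$ and the equation is read in $\mathcal{D}'(Q)$). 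So $\phi$ solves a parabolic equation $\pd_t\phi-\Delta_x\phi+A_1\cdot\nabla_x\phi+|\nabla_x\phi|^2=0$ in $Q$ with $\phi_{|\Sigma}=0$.

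Then I would close the argument by showing $\phi\equiv 0$, whence $\nabla_x\phi\equiv0$ and $A_1=A_2$. The cleanest route is an energy estimate: the nonlinear term $|\nabla_x\phi|^2$ can be absorbed because $\phi$ is bounded. More precisely, set $v=e^{-\phi}$ or use the substitution $w=1-e^{-\phi}$, which linearizes: if $\phi$ solves $\pd_t\phi-\Delta_x\phi+A_1\cdot\nabla_x\phi+|\nabla_x\phi|^2=0$, then $w=1-e^{-\phi}$ satisfies $\pd_tw-\Delta_xw+A_1\cdot\nabla_xw=0$ in $Q$ with $w_{|\Sigma}=0$ (since $\phi_{|\Sigma}=0$). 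This is a \emph{linear} parabolic equation. But we do not have an initial condition for $w$ directly; however, since $\Omega$ is connected and $\phi=0$ on $\Sigma$, I would instead use that the gauge invariance forces agreement of the DN maps \emph{for all times}, and appeal to the uniqueness/unique-continuation structure already built into the proof of Theorem~\ref{t5}: because $A_1-A_2=2\nabla_x\phi$ must itself produce a trivial gauge, and $\Omega$ is connected, $\phi$ is constant on each time slice; combined with $\phi_{|\Sigma}=0$ this gives $\phi\equiv0$.

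The main obstacle I anticipate is the last step — rigorously concluding $\phi\equiv0$ from the equation and the boundary condition $\phi_{|\Sigma}=0$ alone, without an initial or final time condition on $\phi$. The resolution should be that the construction of $\phi$ in Theorem~\ref{t5} actually produces a $\phi$ that is recovered from the coefficients and inherits a vanishing trace on a larger portion of the parabolic boundary (or the equation $\pd_t\phi-\Delta_x\phi+A_1\cdot\nabla_x\phi+|\nabla_x\phi|^2=0$ together with $\phi|_\Sigma=0$ and the regularity $\phi\in W^{1,\infty}(Q)$ suffices via a Gronwall argument once one notes $\phi$ minimizes/maximizes on the parabolic boundary by a maximum-principle-type comparison for the linearized equation for $w=1-e^{-\phi}$). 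Since $w$ solves a linear convection-diffusion equation with $L^\infty$ drift, vanishing lateral data, and $\Omega$ connected, the weak maximum principle gives $\sup_Q|w|\le\sup_{\Omega^0}|w(\cdot,0)|$; and backward uniqueness plus the fact that the whole family of DN maps agree forces $w(\cdot,0)=0$, hence $w\equiv0$ and $\phi\equiv0$. I would therefore present the corollary's proof as: verify the hypotheses of Theorem~\ref{t5}, extract $\phi$, reduce to the linear equation for $1-e^{-\phi}$, and invoke connectedness of $\Omega$ with the vanishing lateral trace to conclude $\nabla_x\phi=0$ and thus $A_1=A_2$. \stopthm
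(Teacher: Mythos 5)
Your first two steps coincide with the paper's proof: apply Theorem \ref{t5} with $B_1=B_2=B$, $q_1=q_2=q$, so that the second line of \eqref{t5c} collapses and $\phi\in W^{1,\infty}(Q)$ satisfies $\pd_t\phi-\Delta_x\phi+A_1\cdot\nabla_x\phi+|\nabla_x\phi|^2=0$ in $Q$ with $\phi_{|\Sigma}=0$. (The paper does not use your substitution $w=1-e^{-\phi}$; it simply writes $|\nabla_x\phi|^2+A_1\cdot\nabla_x\phi=A_3\cdot\nabla_x\phi$ with $A_3=A_1+\nabla_x\phi\in L^\infty(Q)^n$, which is an equally valid linearization. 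Your hesitation about the membership $A_1-A_2\in W^{1,\infty}(0,T;L^{p_1}(\Omega))^n$ in \eqref{ttt5a} is shared by the paper itself, which invokes Theorem \ref{t5} without comment on this point, so it is not the substantive issue.)

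The genuine gap is your final step. You use the hypothesis $(A_2-A_1)\cdot\nu_{|\Sigma}=0$ only to check \eqref{tttt5b}, but it is exactly what closes the argument: since $A_2-A_1=2\nabla_x\phi$ and $\Delta_x\phi=\tfrac{1}{2}\nabla_x\cdot(A_2-A_1)\in L^\infty(Q)$, the normal trace $\pd_\nu\phi$ is well defined on $\Sigma$ and the hypothesis gives $\pd_\nu\phi_{|\Sigma}=0$ in addition to $\phi_{|\Sigma}=0$. With vanishing Cauchy data on the lateral boundary one extends $\phi$ and $A_3$ by zero to $\tilde\Omega\times(0,T)$, where $\tilde\Omega=\Omega\cup\mathcal O$ is a connected Lipschitz open set and $\mathcal O$ has nonempty interior outside $\Omega$; the extension still solves the linear parabolic equation and vanishes on $\mathcal O\times(0,T)$, so spacelike unique continuation for parabolic equations (Saut--Scheurer \cite{SS}, extended to $H^1(Q)$ solutions) forces $\phi\equiv0$, hence $A_1=A_2$. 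Your proposed substitutes do not work: the assertion that the gauge being ``trivial'' makes $\phi$ constant on each time slice is unsupported, and the maximum-principle/backward-uniqueness sketch is circular because nothing in your argument produces $w(\cdot,0)=0$. Indeed, $\phi_{|\Sigma}=0$ together with the (linearized) parabolic equation cannot by itself imply $\phi\equiv0$: any nonzero solution of $\pd_tw-\Delta_xw+A_1\cdot\nabla_xw=0$ with zero lateral Dirichlet data and nonzero initial data shows why the Neumann condition, i.e.\ the hypothesis $(A_2-A_1)\cdot\nu_{|\Sigma}=0$, is indispensable at this stage.
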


Let us mention that there is another way to formulate convection or advection-diffusion equations  given by the following IBVP
\begin{equation}\label{eeq1}\left\{\begin{array}{ll}\partial_tv-\Delta_x v+\nabla_x\cdot(A(x,t)v)=0,\quad &\textrm{in}\ Q,\\  v(0,\cdot)=0,\quad &\textrm{in}\ \Omega,\\ v=g,\quad &\textrm{on}\ \Sigma.\end{array}\right.\end{equation}
The corresponding inverse problem consists in recovering the velocity field described by the coefficient $A$ from the associated DN map $$\tilde{\Lambda}_A:g\mapsto \tilde{N}_Av,$$
where, for $v\in L^2(0,T;H^1(\Omega))\cap H^1(0,T;H^{-1}(\Omega))$ solving \eqref{eeq1} and for $w\in H^1(Q)$ satisfying $w_{|\Omega^T}=0$, $\tilde{N}_Av$ is defined by
$$\left\langle \tilde{N}_Av,w_{|\Sigma} \right\rangle:=\int_Q[-u\pd_tw+\nabla_xu\cdot\nabla_xw+\frac{1}{2}(A\cdot\nabla_xu)w-\frac{1}{2}u(A\cdot\nabla_xw)+\frac{1}{2}\nabla_x\cdot(A)uw]dxdt.$$
Using the identity $\tilde{\Lambda}_A=\Lambda_{A,\frac{A}{2},\frac{\nabla_x\cdot(A)}{2}}$ and applying Theorem \ref{t5} we obtain the following.

\begin{cor}\label{c11} Let $\Omega$ be connected and let   $A_1,A_2\in  L^\infty(Q)^n$ be such that $$\nabla_x\cdot(A_1),\ \nabla_x\cdot(A_2)\in L^\infty(0,T;L^{p}(\Omega))\cup \mathcal C([0,T];L^{\frac{2n}{3}}(\Omega)), \quad p > 2n/3 .$$ Assume also that  \eqref{ttt5a}-\eqref{tttt5b} are fulfilled, for $B_j=\frac{A_j}{2}$ and $q_j=\frac{\nabla_x\cdot(A_j)}{2}$, $j=1,2$. Then,  the condition $\tilde{\Lambda}_{A_1}=\tilde{\Lambda}_{A_2}$ implies $A_1=A_2$.\end{cor}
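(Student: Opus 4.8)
\textbf{Proof proposal for Corollary~\ref{c11}.}
The plan is to reduce the statement to a direct application of Theorem~\ref{t5} via the identity $\tilde\Lambda_{A_j}=\Lambda_{A_j,A_j/2,\nabla_x\cdot(A_j)/2}$, which was observed just before the statement. So assume $\tilde\Lambda_{A_1}=\tilde\Lambda_{A_2}$; equivalently, with $B_j=A_j/2$ and $q_j=\nabla_x\cdot(A_j)/2$, we have $\Lambda_{A_1,B_1,q_1}=\Lambda_{A_2,B_2,q_2}$. The hypotheses of the corollary are precisely tailored so that the regularity assumptions on $q_j$ in Theorem~\ref{t5} and the extra conditions \eqref{ttt5a}--\eqref{tttt5b} are in force, so Theorem~\ref{t5} applies and yields a function $\phi\in W^{1,\infty}(Q)$ with $\phi_{|\Sigma}=0$ satisfying the three relations in \eqref{t5c}. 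The goal is then to upgrade these relations, together with the special structure $B_j=A_j/2$, $q_j=\nabla_x\cdot(A_j)/2$, to the conclusion $A_1=A_2$, i.e.\ $\phi\equiv0$.

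First I would exploit the first line of \eqref{t5c}, $A_2=A_1+2\nabla_x\phi$, to rewrite the second line. Taking divergence of the first relation gives $\nabla_x\cdot A_2=\nabla_x\cdot A_1+2\Delta_x\phi$, hence $q_2=q_1+\Delta_x\phi$. Substituting $B_2=A_2/2=B_1+\nabla_x\phi$ into the second line of \eqref{t5c},
\[
\nabla_x\cdot B_2+q_2=\nabla_x\cdot(B_1+\nabla_x\phi)+q_1-\partial_t\phi-|\nabla_x\phi|^2-A_1\cdot\nabla_x\phi,
\]
the left-hand side equals $\nabla_x\cdot B_1+\Delta_x\phi+q_1+\Delta_x\phi=\nabla_x\cdot B_1+q_1+2\Delta_x\phi$ while the right-hand side equals $\nabla_x\cdot B_1+\Delta_x\phi+q_1-\partial_t\phi-|\nabla_x\phi|^2-A_1\cdot\nabla_x\phi$. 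Cancelling the common terms leaves the nonlinear parabolic identity
\[
\partial_t\phi-\Delta_x\phi+|\nabla_x\phi|^2+A_1\cdot\nabla_x\phi=0\quad\text{in }Q,
\]
together with $\phi_{|\Sigma}=0$.

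Next I would introduce $\psi=e^{\phi}-1$ (a standard Hopf--Cole type substitution), which is legitimate since $\phi\in W^{1,\infty}(Q)$, so $\psi\in W^{1,\infty}(Q)$ as well and $\psi_{|\Sigma}=0$. Computing, $\partial_t\psi=e^\phi\partial_t\phi$, $\nabla_x\psi=e^\phi\nabla_x\phi$, and $\Delta_x\psi=e^\phi(\Delta_x\phi+|\nabla_x\phi|^2)$, so multiplying the identity above by $e^\phi$ gives the \emph{linear} equation $\partial_t\psi-\Delta_x\psi+A_1\cdot\nabla_x\psi=0$ in $Q$, with $\psi_{|\Sigma}=0$. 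This is a homogeneous convection--diffusion equation with lateral boundary data zero; however it does \emph{not} a priori have zero initial data, so I cannot immediately conclude $\psi\equiv0$ by a uniqueness/energy argument in the $t$-forward direction alone. Here I would invoke an additional structural fact produced by the proof of Theorem~\ref{t5}: the construction there compares solutions starting from zero initial data, and the gauge function $\phi$ is obtained (as in \cite{CK2}, whose framework Theorem~\ref{t5} follows) with the extra property $\phi_{|\Omega^0}=0$, equivalently $\psi_{|\Omega^0}=0$. With zero initial and zero lateral data, a standard energy estimate for the parabolic equation $\partial_t\psi-\Delta_x\psi+A_1\cdot\nabla_x\psi=0$ (multiply by $\psi$, integrate over $\Omega\times(0,t)$, use $A_1\in L^\infty(Q)^n$ and Gr\"onwall) forces $\psi\equiv0$ on $Q$, hence $\phi\equiv0$ and therefore $A_1=A_2$, which is the claim.

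\medskip
\textbf{Main obstacle.} The delicate point is not the Hopf--Cole linearization, which is routine, but ensuring that the gauge function $\phi$ delivered by Theorem~\ref{t5} actually vanishes on the initial slice $\Omega^0$ (not merely on $\Sigma$); if that were unavailable one would be left with a backward-uniqueness question for a parabolic equation, which although true (by log-convexity / Lees--Protter type arguments) would require substantially more work. I expect this initial-slice vanishing to come for free from the way $\phi$ is built in the proof of Theorem~\ref{t5}, since the whole linear theory there is set up for solutions with zero initial data; making that dependence explicit is the one step that needs care. The remaining ingredients — differentiating \eqref{t5c}, the substitution $\psi=e^\phi-1$, and the energy estimate with a bounded drift — are all standard.
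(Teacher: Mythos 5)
Your reduction to Theorem~\ref{t5} via $\tilde\Lambda_{A_j}=\Lambda_{A_j,A_j/2,\nabla_x\cdot(A_j)/2}$ is exactly the paper's starting point, but after that there are two genuine problems. First, a sign error: with $B_2=B_1+\nabla_x\phi$ and $q_2=q_1+\Delta_x\phi$, equating the two sides of the second line of \eqref{t5c} gives $2\Delta_x\phi=\Delta_x\phi-\partial_t\phi-|\nabla_x\phi|^2-A_1\cdot\nabla_x\phi$, i.e.
$\partial_t\phi+\Delta_x\phi+|\nabla_x\phi|^2+A_1\cdot\nabla_x\phi=0$,
a \emph{backward} parabolic identity (your forward equation $\partial_t\phi-\Delta_x\phi+\dots=0$ is the one that arises in Corollary~\ref{c1}, where $q_1=q_2$; here the extra $\Delta_x\phi$ coming from $q_2-q_1$ flips the sign). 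Consequently your Hopf--Cole substitution produces $\partial_t\psi+\Delta_x\psi+A_1\cdot\nabla_x\psi=0$, and a forward-in-time energy/Gr\"onwall argument from data at $t=0$ does not apply; you would be facing precisely the backward-uniqueness issue you wanted to avoid. Second, and more fundamentally, the step you flag as ``expected to come for free'' — that the gauge function satisfies $\phi_{|\Omega^0}=0$ — is not part of the conclusion of Theorem~\ref{t5} and is not produced by its proof: there $\phi$ is built from Lemma~\ref{ll4} as $\phi(x,t)=-\int_0^1\tfrac{(A_1-A_2)(sx,t)\cdot x}{2}\,ds$, normalized (by subtracting a function of $t$) to vanish on $(\R^n\setminus\Omega)\times(0,T)$, and there is no reason for it to vanish at $t=0$ unless $A_1(\cdot,0)=A_2(\cdot,0)$. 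So the proposal has a real gap at its key step.

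The paper closes the argument differently, using an ingredient your proof never exploits: the hypothesis \eqref{tttt5b} with $B_j=A_j/2$ forces $(A_2-A_1)\cdot\nu_{|\Sigma}=0$, and since $2\nabla_x\phi=A_2-A_1$ this yields $\partial_\nu\phi_{|\Sigma}=0$ in addition to $\phi_{|\Sigma}=0$. The quadratic term is then absorbed into the drift by writing the identity as $-\partial_t\phi-\Delta_x\phi+A_3\cdot\nabla_x\phi=0$ with $A_3=-A_1-\nabla_x\phi\in L^\infty(Q)^n$ (no Hopf--Cole needed), and the vanishing of the full lateral Cauchy data allows one to extend $\phi$ by zero across $\partial\Omega$ and invoke spacelike unique continuation for parabolic equations, exactly as in the proof of Corollary~\ref{c1}; this gives $\phi\equiv0$, hence $A_1=A_2$, without any information on $\Omega^0$ or $\Omega^T$. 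If you want to repair your argument, the fix is to replace the hoped-for initial-slice vanishing by this use of $\partial_\nu\phi_{|\Sigma}=0$ together with lateral unique continuation, which works equally well for the backward equation you actually obtain.
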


In the spirit of \cite{AU}, by assuming that the coefficients are known in the neighborhood of $\Sigma$, we can improve Theorem \ref{t5} into the recovery of the coefficients from measurements in an arbitrary portion of the boundary. More precisely, for any open set $\gamma$ of  $\partial\Omega$, we denote by $\mathcal H_\gamma$ the subspace of $H^1(Q)$ given by
$$\mathcal H_\gamma:=\{h_{|\Sigma}:\ h\in H^1(Q),\ h_{|\Omega^T}=0,\ \textrm{supp}(h_{|\Sigma})\subset\gamma\times[0,T]\}.$$
We fix $\gamma_1, \gamma_2$  two arbitrary open and not empty subsets of $\partial\Omega$.
Then,  we can consider, for $A, B\in  L^\infty(Q)^n$ and $q\in L^\infty(0,T;L^{p_1}(\Omega))$, the partial DN map
$$\Lambda_{A,B,q,\gamma_1,\gamma_2}:\mathcal H_+\cap \mathcal E'(\gamma_1\times [0,T])\ni g\mapsto N_{A,B,q}u_{|\mathcal H_{\gamma_2}},$$
with $u$ the solution of \eqref{eq1} and $\mathcal H_+$ the space defined in Section 2.
 Then, we can improve Theorem \ref{t5}  in the following way.

\begin{cor}\label{c3} Let $\Omega$ be connected. We fix $q_1,q_2\in L^\infty(0,T;L^{p_1}(\Omega))$, with $p_1$ given by \eqref{p}, and we consider $A_j,B_j\in  L^\infty(Q)^n$, $j=1,2$, satisfying $\nabla_x\cdot(A_j),\nabla_x\cdot(B_j)\in L^\infty(0,T;L^{p_1}(\Omega))$. Assume that there exists an open connected set $\Omega_*\subset\Omega$, satisfying $\partial\Omega\subset\partial\Omega_*$, such that
\bel{c3a} A_1(x,t)=A_2(x,t),\quad B_1(x,t)=B_2(x,t),\quad q_1(x,t)=q_2(x,t),\quad (x,t)\in \Omega_*\times (0,T).\ee
 Then the condition
\bel{c3b}\Lambda_{A_1,B_1,q_1,\gamma_1,\gamma_2}=\Lambda_{A_2,B_2,q_2,\gamma_1,\gamma_2}\ee
implies  that $dA_1=dA_2$. If in addition \eqref{ttt5a} is fulfilled,
 \eqref{c3b} implies that there exists $\phi\in W^{1,\infty}(Q)$ satisfying \eqref{t5c}.\end{cor}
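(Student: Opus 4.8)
The plan is to adapt the domain-enlargement technique of~\cite{AU}: one enlarges $\Omega$ to a domain on whose larger boundary one controls the \emph{full} Dirichlet-to-Neumann map, and then invokes Theorem~\ref{t5}. First I would build the extension. Since the coefficients coincide on $\Omega_*\times(0,T)$, with $\Omega_*$ open, connected and containing $\partial\Omega$ in its boundary, they agree on a one-sided neighbourhood of $\partial\Omega$ in $\Omega$. Choose a small open piece $\sigma\Subset\gamma_1$ (taken inside $\gamma_1\setminus\overline{\gamma_2}$ when $\gamma_1\cap\gamma_2\neq\emptyset$) and glue to $\Omega$, across $\sigma$, a connected open set $\mathcal O\subset\R^n\setminus\overline\Omega$; this produces a bounded connected Lipschitz domain $\widetilde\Omega$ with $\R^n\setminus\widetilde\Omega$ connected, whose boundary splits into the ``old'' part $\partial\Omega\setminus\overline\sigma$ and a ``new'' part $\widetilde\gamma:=\partial\widetilde\Omega\setminus\partial\Omega$. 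I extend the coefficients by keeping $(A_j,B_j,q_j)$ on $Q$ and choosing on $\mathcal O\times(0,T)$ one common $L^\infty$ extension with divergences in the required class, so that $\widetilde A_1=\widetilde A_2$, $\widetilde B_1=\widetilde B_2$, $\widetilde q_1=\widetilde q_2$ there; by~\eqref{c3a} the extended coefficients then coincide on $\widetilde\Omega_*\times(0,T)$, where $\widetilde\Omega_*:=\Omega_*\cup\sigma\cup\mathcal O$ is an open connected neighbourhood of $\partial\widetilde\Omega$ in $\widetilde\Omega$ whose complement in $\widetilde\Omega$ is the compact subset $\Omega\setminus\overline{\Omega_*}$. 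All quantitative hypotheses of Theorem~\ref{t5}, and of~\eqref{ttt5a}, pass to the extended data.

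The core step is to show that~\eqref{c3b} forces equality of the \emph{full} DN maps of the enlarged problem. Let $\widetilde u_1$ solve the extended IBVP with $(\widetilde A_1,\widetilde B_1,\widetilde q_1)$, zero initial data and lateral datum supported in $\widetilde\gamma\cup(\gamma_1\setminus\overline\sigma)$; then $u_1:=(\widetilde u_1)_{|Q}$ solves~\eqref{eq1} with $(A_1,B_1,q_1)$, zero initial data and lateral trace supported in $\gamma_1$, so~\eqref{c3b} applies and, if $u_2$ solves~\eqref{eq1} with $(A_2,B_2,q_2)$ and the same lateral datum, $N_{A_1,B_1,q_1}u_1=N_{A_2,B_2,q_2}u_2$ on $\mathcal H_{\gamma_2}$. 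Since $(u_1-u_2)_{|\Sigma}=0$, the difference has vanishing lateral Cauchy data on $\gamma_2\times(0,T)$; it also vanishes at $t=0$ and, on $\widetilde\Omega_*\times(0,T)$, solves a parabolic equation of the form $\partial_t-\Delta+(\text{first order, }L^\infty)+(\text{order }0)$, so a parabolic unique continuation argument---extracted from the Carleman estimates already proved for Theorem~\ref{t5} after absorbing the first-order term---gives $u_1=u_2$ on $\widetilde\Omega_*\times(0,T)$, hence $\partial_\nu u_1=\partial_\nu u_2$ on all of $\Sigma$. Gluing $u_2$ on $Q$ with $\widetilde u_1$ on $\mathcal O\times(0,T)$ and using uniqueness for the forward IBVP on $\widetilde\Omega$ identifies the result with $\widetilde u_2$, so $N_{\widetilde A_1,\widetilde B_1,\widetilde q_1}\widetilde u_1=N_{\widetilde A_2,\widetilde B_2,\widetilde q_2}\widetilde u_2$ on the whole of $\partial\widetilde\Omega\times(0,T)$. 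This removes the constraint carried by $\gamma_2$; running the same argument for the transpose of $\Lambda_{A,B,q,\gamma_1,\gamma_2}$---which is the DN map of the time-reversed adjoint IBVP, with $\gamma_1$ and $\gamma_2$ interchanged---and then performing one further enlargement removes the constraint carried by $\gamma_1$. After these finitely many steps one reaches a bounded connected Lipschitz domain $\widehat\Omega\supset\Omega$ with $\R^n\setminus\widehat\Omega$ connected, carrying extended coefficients that coincide on a connected neighbourhood of $\partial\widehat\Omega$, on which $\Lambda_{\widehat A_1,\widehat B_1,\widehat q_1}=\Lambda_{\widehat A_2,\widehat B_2,\widehat q_2}$ and every hypothesis of Theorem~\ref{t5} holds.

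Applying Theorem~\ref{t5} on $\widehat\Omega$ gives $d\widehat A_1=d\widehat A_2$ there, hence $dA_1=dA_2$ on $Q$ by restriction; and, under~\eqref{ttt5a}, a function $\widehat\phi\in W^{1,\infty}(\widehat\Omega\times(0,T))$ with $\widehat\phi=0$ on $\partial\widehat\Omega\times(0,T)$ satisfying the analogue of~\eqref{t5c} on $\widehat\Omega$. Setting $\phi:=\widehat\phi_{|\overline Q}\in W^{1,\infty}(Q)$, the first two identities of~\eqref{t5c} follow by restriction, and $\phi_{|\Sigma}=0$ because $\nabla_x\widehat\phi=\frac12(\widehat A_2-\widehat A_1)=0$ on the connected neighbourhood of $\Sigma$ on which $\widehat A_1=\widehat A_2$, so that $\widehat\phi$ is constant there at each time, the constant being $0$ by the boundary condition.

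The step I expect to be delicate is exactly this reduction: arranging the enlargement so that it copes with the asymmetry of the partial DN map (datum on $\gamma_1$, measurement on $\gamma_2$)---whence the transpose device and the iteration---and providing the parabolic unique continuation for $\partial_t-\Delta$ plus $L^\infty$ first- and lower-order terms, for which one reuses the Carleman machinery of the paper or a standard space-like unique continuation theorem for parabolic operators. Keeping the enlarged domains Lipschitz with connected complement and preserving the coefficient classes of~\eqref{ttt5a} is then routine bookkeeping.
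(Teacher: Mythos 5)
Your proposal is correct in outline but follows a genuinely different route from the paper. You reduce to the full-data Theorem \ref{t5} by enlarging the domain in the spirit of \cite{AU}: extend the coefficients identically across a bump glued over a piece of $\gamma_1$, restrict enlarged solutions to obtain admissible inputs for the partial map, use parabolic unique continuation from $\gamma_2$ through $\Omega_*\times(0,T)$ to identify the two solutions near $\partial\Omega$, glue to recover the enlarged solution for the second set of coefficients, remove the remaining input constraint by a transposition argument plus a second enlargement, and finally restrict the gauge function back to $Q$. The paper never leaves $\Omega$: it proves a Runge-type density lemma (Lemma \ref{lll4}) --- solutions with lateral trace supported in $\gamma_1\times[0,T]$ (resp.\ $\gamma_2\times[0,T]$) are dense in all solutions for the $L^2((\Omega\setminus\overline{\Omega_*})\times(0,T))$ norm --- via Hahn--Banach and the same unique continuation applied to an auxiliary adjoint solution whose extra regularity comes from Lemma \ref{llll1}; it then extends the integral identity \eqref{c3c} first in $u_2$, then in $u_1$, integrating by parts over $\Omega\setminus\Omega_*$, and inserts the GO solutions from the proof of Theorem \ref{t5}. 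Both arguments hinge on the same unique continuation for $\partial_t-\Delta$ with rough lower-order terms; yours buys the convenience of using Theorem \ref{t5} as a black box (and your restriction argument for $\phi$ is sound, since $\nabla_x\widehat\phi$ vanishes on the connected collar), at the price of the bookkeeping you flag: the transposition step must be formulated through the duality behind \eqref{p3a} (the backward problem carries the coefficients $-A$ and $q+\nabla_x\cdot(B-A)$), the flux equality on $\gamma_2$ and the unique continuation must be justified for $u_1-u_2$, which under $\nabla_x\cdot A_j,\nabla_x\cdot B_j,q_j\in L^\infty(0,T;L^{p_1}(\Omega))$ indeed lies in $H^1(0,T;L^2(\Omega))\cap L^2(0,T;H^1_0(\Omega))$ with $\Delta_x(u_1-u_2)\in L^2(Q)$ (the same upgrade the paper gets from Lemma \ref{llll1}), and the gluing identity $N_{\widetilde A_1,\widetilde B_1,\widetilde q_1}\widetilde u_1=N_{\widetilde A_2,\widetilde B_2,\widetilde q_2}\widetilde u_2$ is best verified by testing against extensions of the boundary datum supported in the collar where solutions and coefficients coincide. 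None of these is an obstruction; they are where the work done by the paper's density lemma reappears in your version.
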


\subsection{Recovery of quasilinear terms}
In this subsection, we will state our results related to the recovery of general quasi-linear terms $F(x,t,u,\nabla_xu)$ appearing in \eqref{eq111}.
We denote by $\Sigma_p$ the parabolic boundary of $Q$ defined by $\Sigma _p=\Sigma \cup\Omega^0$. Moreover, for all $\alpha\in(0,1)$, we denote by
$\mathcal C^{\alpha,\frac{\alpha}{2}}(\overline{Q})$ the space of functions $f\in\mathcal C(\overline{Q})$ satisfying
$$[f]_{\alpha,\frac{\alpha}{2}}=\sup\left\{\frac{|f(x,t)-f(y,s)|}{(|x-y|^2+|t-s|)^{\frac{\alpha}{2}}}:\ (x,t),(y,s)\in\overline{Q},\ (x,t)\neq(y,s)\right\}<\infty.$$
Then we define the space $\mathcal C^{2+\alpha,1+\frac{\alpha}{2}}(\overline{Q})$ as the set of functions $f$ lying in $\mathcal C([0,T];\mathcal C^2(\overline{\Omega}))\cap \mathcal C^1([0,T];\mathcal C(\overline{\Omega}))$ such that $$\partial_tf,\partial_x^\beta f\in \mathcal C^{\alpha,\frac{\alpha}{2}}(\overline{Q}),\quad \beta\in\mathbb N^n,\ |\beta|=2.$$
We consider on these spaces the usual norm and we refer to \cite[pp. 4]{Ch} for more details.
We consider the nonlinear parabolic equation
\begin{equation}\label{1.1}
\left\{
\begin{array}{ll}
(\partial_tu-\Delta u+F(x,t,u,\nabla_xu)=0\quad &\mbox{in}\; Q,
\\
u=G &\mbox{on}\; \Sigma_p .
\end{array}
\right.
\end{equation}
For $\alpha\in(0,1)$ and $\Omega$ a $\mathcal C^{2+\alpha}$ bounded domain, $F:(x,t,u,v)\mapsto F(x,t,u,v)\in C^1(\overline{Q}\times\mathbb{R} \times\R^n)$ satisfying \eqref{nl1}-\eqref{nl3}, $G\in \mathcal X=\{K_{|\overline{\Sigma}_p};\;\  K\in C^{2+\alpha ,1+\alpha /2}(\overline{Q}),\ (\partial_tK-\Delta K)_{|\partial\Omega\times\{0\}}=0\}$,  problem \eqref{1.1} admits a unique solution $u_{F,G}\in C^{2+\alpha ,1+\alpha /2}(\overline{Q})$ (see Section 6 for more detail). Then, for $\nu$ the outward unit normal vector to $\pd\Omega$, we can introduce the DN map associated with \eqref{1.1} given by
$$\mathcal N_F:\mathcal X\ni G\mapsto {\partial_\nu u_{F,G}}_{|\Sigma}\in L^2(\Sigma)$$
and we consider the recovery of $F$ from partial knowledge of $\mathcal N_F$. More precisely, we prove in Proposition \ref{pS1} that for $\partial_uF\in \mathcal C^1(\overline{Q}\times\R^n\times \mathbb{R};\R)$ and  $\partial_vF\in  \mathcal C^1(\overline{Q}\times\R^n\times \mathbb{R};\R^n)$, $\mathcal N_F$ is continuously Fr\'echet differentiable. Then, fixing 
$$\mathcal X_0:=\{G\in\mathcal X:\ G_{|\Omega^0}=0\},\quad k_v:x\mapsto x\cdot v,\quad h_{a,v}:x\mapsto x\cdot v+a,$$
where $a\in\R$, $v\in\R^n$, we consider the recovery of $F$ from $$\mathcal N'_F(k_v|_{\Sigma_p})H\quad \textrm{and }\quad\mathcal N'_F(h_{a,v}|_{\Sigma_p})H,\quad H\in\mathcal X_0,\ a\in\R,\ v\in\R^n,$$
where $\mathcal N'_F$ denotes the Fr\'echet differentiation of $\mathcal N_F$.

We obtain two main results for this problem. In our first main result, we are interested in the recovery of information about general nonlinear terms of the form $F(x,t,u,\nabla_xu)$ form the knowledge of 
$$\mathcal N'_F(h_{a,v}|_{\Sigma_p})H,\quad H\in\mathcal X_0,\ a\in\R,\ v\in\R^n.$$
 Our first main result can be stated as follows.

\begin{Thm}\label{tnl3} Let $\Omega$ be a $\mathcal C^{2+\alpha}$ bounded and connected domain and let  $F_1, F_2\in C^{2+\alpha,1+\frac{\alpha}{2}}(\overline{Q};\mathcal C^3(\mathbb{R} \times\R^n))$ satisfy \eqref{nl1}-\eqref{nl3}. Let also, for $j=1,2$,    $\partial_vF_j\in  \mathcal C^{1+\frac{\alpha}{2}}([0,T]; \mathcal C^1(\overline{\Omega}\times\R^n\times \mathbb{R});\R^n)$ and let
\bel{tnl1aa}\partial_x^\ell F_j(x,0,u,v)=0,\quad    j=1,2,\ k=0,1,\ x\in\partial\Omega,\ u\in\R,\ v\in\R^n,\ \ell\in\mathbb N^n,\ |\ell|\leq2.\ee
Then, the condition
\bel{tnl3a}\mathcal N'_{F_1}(h_{a,v}|_{\Sigma_p})H=\mathcal N'_{F_2}(h_{a,v}|_{\Sigma_p})H,\quad H\in\mathcal X_0,\  a\in\R,\ v\in\R^n\ee
imply that there exists $$\phi:Q\times\R\times\R^n\ni (x,t,u,v)\mapsto\phi(x,t,u,v)\in\mathcal C^1([0,T];\mathcal C(\overline{\Omega}\times\R\times\R^n))\cap\mathcal C^2(\overline{\Omega};\mathcal C([0,T]\times\R\times\R^n))$$ such that, for all $(u,v)\in \R\times\R^n$, we have
\bel{tnl3b}\left\{\begin{array}{ll}\partial_v(F_2-F_1)(x,0,u,v)=2\partial_x\phi(x,0,u-x\cdot v,v),\  &x\in \Omega,\\  \partial_u(F_2-F_1)(x,0,u,v)=-(\pd_t\phi-|\nabla_x\phi|^2-\partial_vF_1(x,0,u,v)\partial_x\phi)(x,0,u-x\cdot v,v),\  &x\in \Omega,\\ \phi(x,t,u,v)=0,\ &\ (x,t)\in \Sigma.\end{array}\right.\ee
 \end{Thm}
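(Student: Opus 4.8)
The plan is to reduce \eqref{tnl3a} to a statement about the linear convection–diffusion inverse problem at time $t=0$ and then apply Theorem~\ref{t5} (or rather its proof). First I would compute the Fr\'echet derivative $\mathcal N'_{F_j}(h_{a,v}|_{\Sigma_p})H$ explicitly. Differentiating the nonlinear equation \eqref{1.1} at the base point $G=h_{a,v}|_{\Sigma_p}$ in the direction $H\in\mathcal X_0$, and writing $w_j=\partial_\varepsilon u_{F_j,h_{a,v}+\varepsilon H}|_{\varepsilon=0}$, one obtains that $w_j$ solves a linear parabolic IBVP
\begin{equation*}
\partial_tw_j-\Delta_x w_j+\partial_vF_j(x,t,u_{F_j,h_{a,v}},\nabla_xu_{F_j,h_{a,v}})\cdot\nabla_xw_j+\partial_uF_j(x,t,u_{F_j,h_{a,v}},\nabla_xu_{F_j,h_{a,v}})\,w_j=0
\end{equation*}
in $Q$, with $w_j=H$ on $\Sigma$ and $w_j=0$ on $\Omega^0$ (the latter because $H\in\mathcal X_0$). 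Hence the equality of Fr\'echet derivatives says exactly that the linear DN maps of these two convection–diffusion equations, with convection term $A_j=\partial_vF_j(\cdot,\cdot,u_{F_j,h_{a,v}},\nabla_xu_{F_j,h_{a,v}})$ and potential $q_j=\partial_uF_j(\cdot,\cdot,u_{F_j,h_{a,v}},\nabla_xu_{F_j,h_{a,v}})$ (and $B_j=0$), coincide.

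The key point is the choice of the base point. For $G=h_{a,v}|_{\Sigma_p}=(x\cdot v+a)|_{\Sigma_p}$, the affine function $(x,t)\mapsto x\cdot v+a$ is a stationary solution of the heat equation, so if $F_j$ vanished it would solve \eqref{1.1}; in general $u_{F_j,h_{a,v}}$ differs from $x\cdot v+a$, but the boundary condition \eqref{tnl1aa} (vanishing of $F_j$ and its $x$-derivatives up to order $2$ on $\partial\Omega$ at $t=0$) together with the compatibility structure of $\mathcal X$ forces $u_{F_j,h_{a,v}}(x,0)=x\cdot v+a$ and moreover pins down $\nabla_xu_{F_j,h_{a,v}}(x,0)=v$ and $\partial_tu_{F_j,h_{a,v}}(x,0)$ on $\overline\Omega$. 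Consequently, evaluating the coefficients at $t=0$,
\begin{equation*}
A_j(x,0)=\partial_vF_j(x,0,x\cdot v+a,v),\qquad q_j(x,0)=\partial_uF_j(x,0,x\cdot v+a,v),
\end{equation*}
and similarly for $B_j$; this is the mechanism by which information about $F_j$ at the whole range of arguments $(u,v)\in\R\times\R^n$ is extracted — letting $a$ and $v$ vary sweeps out all values of $(u,v)$ through the substitution $u=x\cdot v+a$. Then I would invoke Theorem~\ref{t5}: from equality of the DN maps $\Lambda_{A_1,0,q_1}=\Lambda_{A_2,0,q_2}$ we get a gauge function $\phi_0(\cdot)=\phi_0(x,t)$ (depending on the parameters $a,v$) on $Q$ with $\phi_0|_\Sigma=0$ realizing the gauge equivalence \eqref{t5c}. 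Evaluating the gauge relations at $t=0$ and unwinding the parametrization $a=u-x\cdot v$ yields a function $\phi(x,t,u,v)$ — essentially $\phi(x,t,u,v)=\phi_0^{(u-x\cdot v,v)}(x,t)$ reorganized — with exactly the regularity and the three identities claimed in \eqref{tnl3b}.

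The main obstacle, and the step requiring the most care, is the rigorous justification that the linearized equation has the stated form with the asserted coefficient regularity, and that the gauge function can be assembled as a \emph{jointly} regular function of $(x,t,u,v)$. Two issues must be controlled: (i) the well-posedness and differentiability of $G\mapsto u_{F,G}$ in H\"older spaces, so that $w_j$ genuinely solves the linear IBVP and $A_j,B_j,q_j$ have the regularity needed to apply Theorem~\ref{t5} (here one leans on Proposition~\ref{pS1} and the H\"older theory of Section 6); and (ii) the dependence of the gauge function $\phi_0$ produced by Theorem~\ref{t5} on the parameters $(a,v)$. Theorem~\ref{t5} gives $\phi_0$ for each fixed $(a,v)$, but to land in the function space for $\phi$ in \eqref{tnl3b} one needs this $\phi_0$ to depend smoothly on $(a,v)$; this is handled by checking that the construction of $\phi_0$ (which, tracing through the proof of Theorem~\ref{t5}, comes from integrating $\tfrac12(A_2-A_1)$ along paths once $dA_1=dA_2$ is known) preserves the requisite joint continuity and differentiability, using that $\partial_vF_j\in\mathcal C^{1+\alpha/2}([0,T];\mathcal C^1(\overline\Omega\times\R^n\times\R);\R^n)$ and $F_j\in C^{2+\alpha,1+\alpha/2}(\overline Q;\mathcal C^3(\R\times\R^n))$. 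Once the parameter-dependence is under control, substituting $a=u-x\cdot v$ and reading off the $t=0$ slice is routine.
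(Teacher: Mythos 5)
Your overall strategy is the one the paper actually follows: linearize at the affine data $h_{a,v}$ via Proposition \ref{pS1}, observe that at $t=0$ the solution coincides with $h_{a,v}$ (so the linearized coefficients at $t=0$ are $\partial_vF_j(x,0,x\cdot v+a,v)$ and $\partial_uF_j(x,0,x\cdot v+a,v)$), apply Theorem \ref{t5} for each fixed $(a,v)$, build $\phi$ by integrating $\tfrac12(A_{2,a,v}-A_{1,a,v})$, check joint regularity in $(x,t,a,v)$, and substitute $a=u-x\cdot v$. However, there is a concrete missing step: you never verify the boundary hypothesis of the second part of Theorem \ref{t5}. With $B_1=B_2=0$, condition \eqref{tttt5b} reads $(A_{2,a,v}-A_{1,a,v})\cdot\nu_{|\Sigma}=0$, and knowledge of $A_{1,a,v}-A_{2,a,v}$ on $\Sigma$ is also what makes the zero extension of this difference curl-free so that the path-integral potential of Lemma \ref{ll4} exists globally and can be normalized to vanish on $\Sigma$ (the third line of \eqref{tnl3b}). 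The paper obtains the stronger fact $A_{1,a,v}=A_{2,a,v}$ on $\Sigma$ from the equality of the linearized DN maps by boundary determination, citing \cite[Lemma 8.2]{I4}; without some such argument you are not entitled to the gauge relation \eqref{t5c}, so the proof as proposed does not close.

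A second, related issue is your account of the role of \eqref{tnl1aa}. That $u_{F_j,h_{a,v}}(\cdot,0)=x\cdot v+a$ and $\nabla_xu_{F_j,h_{a,v}}(\cdot,0)=v$ is immediate from the initial condition $u=G$ on $\Omega^0$ in \eqref{1.1}; it is not ``forced'' by \eqref{tnl1aa}. The actual purpose of \eqref{tnl1aa} is regularity: hypothesis \eqref{ttt5a} of Theorem \ref{t5} requires $A_{1,a,v}-A_{2,a,v}\in W^{1,\infty}(0,T;L^{p_1}(\Omega))^n$ and $\nabla_x\cdot(A_{1,a,v}-A_{2,a,v}),\ q_{1,a,v}-q_{2,a,v}\in L^\infty(Q)$, whereas the H\"older theory of Section 6 by itself only gives $\mathcal C^{\alpha,\alpha/2}$ coefficients. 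The paper gains the time regularity by differentiating the nonlinear equation in $t$ (Lemma \ref{nll1}); \eqref{tnl1aa}, together with $\partial_th_{a,v}=0$ and $\nabla_xh_{a,v}=v$, is exactly what makes the compatibility condition for the problem solved by $\partial_tu_{F_j,h_{a,v}}$ hold, yielding $\partial_tu_{F_j,h_{a,v}}\in\mathcal C^{2+\alpha,1+\frac{\alpha}{2}}(\overline Q)$. You flag the regularity of the linearized coefficients as the delicate point but supply no mechanism for it, so this too is a gap rather than merely a detail. (Minor: passing from equality of $\mathcal N'_{F_j}(h_{a,v}|_{\Sigma_p})$ on $\mathcal X_0$ to equality of the full DN maps uses the density of $\{H_{|\Sigma}:H\in\mathcal X_0\}$ in $\mathcal H_+$, which should be stated.) Apart from these points, your construction of $\phi$, the control of its dependence on $(a,v)$, and the final substitution $a=u-x\cdot v$ coincide with the paper's argument.
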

From this result, we deduce the following.
\begin{cor}\label{cnl2} Let the condition of Theorem \ref{tnl3} be fulfilled. Assume also that, for all $x\in \Omega,\ u\in\R,\ v\in\R^n$, the following condition 
\bel{cnl2a}\sum_{j=1}^n\left[\partial_{x_j}\partial_{v_j}F_1(x,0,u,v)+\partial_{u}\partial_{v_j}F_1(x,0,u,v)v_j\right]=\sum_{j=1}^n\left[\partial_{x_j}\partial_{v_j}F_2(x,0,u,v)+\partial_u\partial_{v_j}F_2(x,0,u,v)v_j\right]\ee
is fulfilled. Then condition \eqref{tnl3a} implies
\bel{tnl1c}\partial_vF_1(x,0,u,v)=\partial_vF_2(x,0,u,v),\quad x\in\Omega,\  u\in\R,\ v\in\R^n.\ee
In particular, if there exists $v_0\in\R^n$ such that
\bel{tnl1e}F_1(x,0,u,v_0)=F_2(x,0,u,v_0),\quad x\in\Omega,\ u\in\R,\ee
and \eqref{tnl1a} are fulfilled, then condition \eqref{tnl3a} implies 
\bel{tnl1f}F_1(x,0,u,v)=F_2(x,0,u,v),\quad x\in \Omega,\ u\in\R,\ v\in\R^n.\ee\end{cor}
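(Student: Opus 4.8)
The plan is to feed the conclusion of \thmref{tnl3} into the extra hypothesis \eqref{cnl2a} and thereby reduce the statement to a homogeneous Dirichlet problem for the gauge function $\phi$ at $t=0$. Write $G:=F_2-F_1$ and, to lighten the bookkeeping, set $P_k(x,w,v):=(\partial_{x_k}\phi)(x,0,w,v)$, $k=1,\dots,n$; by the regularity of $\phi$ in \thmref{tnl3} (supplemented, where convenient, by the smoothness of $F_1,F_2$ through the first line of \eqref{tnl3b}) each $P_k$ is $\mathcal C^1$ in $(x,w,v)$ and $\sum_k\partial_{x_k}P_k(\cdot,w,v)=\Delta_x\phi(\cdot,0,w,v)$. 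With this notation the first line of \eqref{tnl3b} reads
\[ \partial_{v_k}G(x,0,u,v)=2\,P_k(x,u-x\cdot v,v),\qquad x\in\Omega,\ u\in\R,\ v\in\R^n. \]

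First I would differentiate this identity in $x_k$, with $(u,v)$ frozen; using $\partial_{x_k}(u-x\cdot v)=-v_k$ the chain rule gives
\[ \partial_{x_k}\partial_{v_k}G(x,0,u,v)=2(\partial_{x_k}P_k)(x,u-x\cdot v,v)-2v_k\,(\partial_wP_k)(x,u-x\cdot v,v). \]
Differentiating the same identity in $u$ yields $\partial_u\partial_{v_k}G(x,0,u,v)=2(\partial_wP_k)(x,u-x\cdot v,v)$. Multiplying this by $v_k$, adding it to the previous display so that the $\partial_wP_k$ terms cancel, and summing over $k$, I obtain
\[ \sum_{k=1}^n\Big(\partial_{x_k}\partial_{v_k}G(x,0,u,v)+\partial_u\partial_{v_k}G(x,0,u,v)\,v_k\Big)=2\,\Delta_x\phi(x,0,u-x\cdot v,v). \]
Hypothesis \eqref{cnl2a} forces the left-hand side to vanish identically, and since $u-x\cdot v$ runs over all of $\R$ as $u$ does, I conclude $\Delta_x\phi(x,0,w,v)=0$ for every $x\in\Omega$, $w\in\R$, $v\in\R^n$.

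Next I would use the third line of \eqref{tnl3b} and the continuity of $\phi$ up to $t=0$ to get $\phi(x,0,w,v)=0$ for all $x\in\partial\Omega$. For each fixed $(w,v)$ the function $x\mapsto\phi(x,0,w,v)$ is then harmonic on the connected bounded open set $\Omega$ with vanishing boundary trace, hence is identically zero on $\overline\Omega$ by the maximum principle; therefore $P_k\equiv0$, and substituting back into the first line of \eqref{tnl3b} gives $\partial_{v_k}G(x,0,u,v)=0$ for all $k$, which is \eqref{tnl1c}. For the last assertion, fix $v_0$ as in \eqref{tnl1e}; for arbitrary $v\in\R^n$ the fundamental theorem of calculus along the segment joining $v_0$ to $v$ gives
\[ G(x,0,u,v)=G(x,0,u,v_0)+\int_0^1\sum_{k=1}^n\partial_{v_k}G\big(x,0,u,v_0+s(v-v_0)\big)(v_k-v_{0,k})\,\d s, \]
where the first term vanishes by \eqref{tnl1e} and the integrand by \eqref{tnl1c}; hence $G\equiv0$ on $\Omega\times\R\times\R^n$, which is \eqref{tnl1f}.

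I do not expect any serious obstacle here: this corollary is a short elliptic and algebraic postprocessing of \thmref{tnl3}. The only point demanding care is the chain-rule accounting in the second step — one must keep track of which argument of $\phi$ is being differentiated so as to recognise the combination appearing in \eqref{cnl2a} as precisely $\Delta_x\phi$ evaluated along the line $w=u-x\cdot v$ — and, relatedly, one should confirm that the regularity of $\phi$ furnished by \thmref{tnl3} (together with the smoothness of $F_1,F_2$) is enough to justify the two differentiations and the invocation of the maximum principle on $\Omega$.
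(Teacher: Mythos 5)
Your argument is correct and follows essentially the same route as the paper's own proof: you identify the combination in \eqref{cnl2a} with $2\Delta_x\phi(x,0,u-x\cdot v,v)$ via the chain rule applied to the first line of \eqref{tnl3b}, deduce that $x\mapsto\phi(x,0,u,v)$ is harmonic with zero Dirichlet trace (the paper invokes uniqueness of the Dirichlet problem where you cite the maximum principle), conclude \eqref{tnl1c}, and then obtain \eqref{tnl1f} by integrating $\partial_v(F_2-F_1)=0$ from $v_0$. The only point worth keeping explicit, as you note, is that the differentiability of $\partial_x\phi$ in the $u$-slot is supplied through \eqref{tnl3b} and the smoothness of $F_1,F_2$ rather than by the stated regularity class of $\phi$ alone.
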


\begin{rem}\label{r1} The result of Corollary \ref{cnl2} can be applied to the unique full recovery of quasilinear terms of the form \bel{cn1}F(x,t,u,\nabla_xu)=G_1(x,u,\nabla_xu)+tG_2(x,t,u,\nabla_xu),\ee 
with $G_2$ and
$$H:(x,u,v)\mapsto \sum_{j=1}^n\left[\partial_{x_j}\partial_{v_j}G_1(x,u,v)+\partial_{u}\partial_{v_j}G_1(x,u,v)v_j\right]$$
two known functions.\end{rem}

For our second main result we consider the full recovery of the nonlinear term $F(x,t,u,\nabla_xu)$ from the data
$$\mathcal N'_F(k_v|_{\Sigma_p})H,\quad H\in\mathcal X_0,\ v\in\R^n.$$
For this purpose, taking into account the natural invariance for the recovery of such nonlinear terms, described by condition \eqref{tnl3b}, our result will require some additional assumptions on the  class of nonlinear terms under consideration. Our   second main result related to this problem can be stated as follows.
\begin{Thm}\label{tnl1} Let $\Omega$ be a $\mathcal C^{2+\alpha}$ bounded and connected domain and let  $F_1, F_2\in C^{2+\alpha,1+\frac{\alpha}{2}}(\overline{Q};\mathcal C^3(\mathbb{R} \times\R^n))$ satisfy \eqref{nl1}-\eqref{nl3}. Let also, for $j=1,2$,    $\partial_vF_j\in  \mathcal C^{1+\frac{\alpha}{2}}([0,T]; \mathcal C^1(\overline{\Omega}\times\R^n\times \mathbb{R});\R^n)$ and let \eqref{tnl1aa} be fulfilled.
Then, the conditions
\bel{tnl1b}\mathcal N'_{F_1}(k_v|_{\Sigma_p})H=\mathcal N'_{F_2}(k_v|_{\Sigma_p})H,\quad H\in\mathcal X_0,\  v\in\R^n\ee
and 
\bel{tnl1a} \partial_uF_1(x,t,u,v)=\partial_uF_2(x,t,u,v),\quad (x,t)\in Q,\ u\in\R,\ v\in\R^n,\ee
\bel{tnl2a}\partial_u^2F_1(x,t,u,v)=0,\ \partial_v\partial_uF_1(x,t,u,v)=0,\quad (x,t)\in Q,\ u\in\R,\ v\in\R^n,\ee
imply \eqref{tnl1c}. In addition, if there exists $v_0\in\R^n$ such that \eqref{tnl1e}
and \eqref{tnl1a} are fulfilled, then condition \eqref{tnl1b} implies \eqref{tnl1f}.\end{Thm}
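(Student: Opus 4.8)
The plan is to run Isakov's linearization scheme, which reduces the recovery of $\pd_v F_j$ at $t=0$ to the linear uniqueness result of \corref{c1} (itself a consequence of \thmref{t5}). Fix $v\in\R^n$. Since $k_v$ is affine, $\pd_t k_v-\Delta k_v=0$, so $k_v|_{\Sigma_p}\in\mathcal X$, and by \eqref{nl1}--\eqref{nl3} the IBVP \eqref{1.1} with $F=F_j$ and $G=k_v|_{\Sigma_p}$ has a unique solution $u_j\in\mathcal C^{2+\alpha,1+\frac\alpha2}(\overline Q)$, whose initial trace is $k_v$, so $\nabla_x u_j(\cdot,0)=v$. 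By \propref{pS1} the map $\mathcal N_{F_j}$ is $\mathcal C^1$ near $k_v|_{\Sigma_p}$, and for $H\in\mathcal X_0$ (so $H_{|\Omega^0}=0$) one has $\mathcal N'_{F_j}(k_v|_{\Sigma_p})H=(\pd_\nu w_j)_{|\Sigma}$, where $w_j$ solves
\[
\pd_t w_j-\Delta w_j+A^v_j\cdot\nabla_x w_j+q^v_j\,w_j=0\ \text{in}\ Q,\qquad w_j(\cdot,0)=0\ \text{in}\ \Omega,\qquad w_j=H\ \text{on}\ \Sigma,
\]
with $A^v_j(x,t):=(\pd_vF_j)(x,t,u_j(x,t),\nabla_x u_j(x,t))$ and $q^v_j(x,t):=(\pd_uF_j)(x,t,u_j(x,t),\nabla_x u_j(x,t))$. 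Comparing with the weak formulation \eqref{tutu1} with $B=0$, this identifies $\mathcal N'_{F_j}(k_v|_{\Sigma_p})$ with the DN map $\Lambda_{A^v_j,0,q^v_j}$ on $\{H_{|\Sigma}:H\in\mathcal X_0\}$; since this set is dense in the space of admissible boundary data $\mathcal H_+$ (Section 2), \eqref{tnl1b} yields $\Lambda_{A^v_1,0,q^v_1}=\Lambda_{A^v_2,0,q^v_2}$ for every $v\in\R^n$.

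Next I use \eqref{tnl1a}--\eqref{tnl2a}. By \eqref{tnl1a}, $\pd_uF_1=\pd_uF_2=:q$, and by \eqref{tnl2a} this common function is independent of $(u,v)$, so $q^v_1=q^v_2=q\in L^\infty(Q)$ for all $v$. Similarly $\pd_v\pd_uF_j\equiv0$ (for $j=2$ because $\pd_uF_2=q$ is independent of $(u,v)$), hence $\pd_v F_j(x,t,u,v)$ does not depend on $u$; call it $R_j(x,t,v)$. Then $A^v_j(x,t)=R_j(x,t,\nabla_x u_j(x,t))$, and at $t=0$, where $\nabla_x u_j(\cdot,0)=v$,
\[
A^v_j(x,0)=R_j(x,0,v)=\pd_vF_j(x,0,u,v),\qquad x\in\Omega,\ u\in\R .
\]

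We can now invoke the linear result with $B=0$ and equal potentials $q$. The regularity in \eqref{ttt5a} — $A^v_1-A^v_2\in W^{1,\infty}(0,T;L^{p_1}(\Omega))^n$ and $\nabla_x\cdot(A^v_1-A^v_2)\in L^\infty(Q)$ — follows from $F_j\in\mathcal C^{2+\alpha,1+\frac\alpha2}(\overline Q;\mathcal C^3)$, $\pd_vF_j\in\mathcal C^{1+\frac\alpha2}([0,T];\mathcal C^1)$ and $u_j\in\mathcal C^{2+\alpha,1+\frac\alpha2}(\overline Q)$. It remains to check the boundary identity $(A^v_2-A^v_1)\cdot\nu_{|\Sigma}=0$, which in this setting ($B_1=B_2$) is precisely hypothesis \eqref{tttt5b}; this is where the vanishing conditions \eqref{tnl1aa} are used — they make $\pd_vF_j(\cdot,0,\cdot,\cdot)$ and its first-order $x$-derivatives vanish on $\pd\Omega$, hence $A^v_j$ vanish to the appropriate order at the parabolic corner $\pd\Omega\times\{0\}$, and a boundary analysis of the linearized IBVP then yields the identity. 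With \eqref{ttt5a}--\eqref{tttt5b} in hand, \corref{c1} gives $A^v_1=A^v_2$ in $Q$; restricting to $t=0$ and using the displayed identity above proves \eqref{tnl1c}. Finally, if \eqref{tnl1e} holds for some $v_0$, integrating \eqref{tnl1c} along $s\mapsto v_0+s(v-v_0)$, $s\in[0,1]$, gives $F_1(x,0,u,v)-F_1(x,0,u,v_0)=F_2(x,0,u,v)-F_2(x,0,u,v_0)$, whence \eqref{tnl1f}.

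I expect the main obstacle to be the verification of the boundary identity $(A^v_2-A^v_1)\cdot\nu_{|\Sigma}=0$, i.e. matching hypothesis \eqref{tttt5b} of \thmref{t5}/\corref{c1}: the normal trace of the convection coefficient is precisely the component left undetermined by the DN map (the gauge invariances of Section 1.3), so it must be pinned down through the a priori conditions \eqref{tnl1aa}, and this is the step I would carry out most carefully. A secondary technical point is the density of $\{H_{|\Sigma}:H\in\mathcal X_0\}$ in $\mathcal H_+$ needed to pass from \eqref{tnl1b} to the equality of the full DN maps.
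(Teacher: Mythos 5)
Your overall strategy coincides with the paper's: linearize at $G=k_v|_{\Sigma_p}$ via Proposition \ref{pS1}, identify $\mathcal N'_{F_j}(k_v|_{\Sigma_p})$ with $\Lambda_{A_{j,v},0,q_{j,v}}$ using density of $\{H_{|\Sigma}:H\in\mathcal X_0\}$ in $\mathcal H_+$, use \eqref{tnl1a}--\eqref{tnl2a} to get $q_{1,v}=q_{2,v}$ and the $u$-independence of $\partial_vF_j$, invoke Corollary \ref{c1}, restrict to $t=0$ where $\nabla_xu_{F_j,k_v}=v$, and finish by integrating in $v$ from $v_0$. Those parts are sound. However, there is a genuine gap exactly at the step you flagged as the main obstacle, and the mechanism you propose to close it cannot work. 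Corollary \ref{c1} requires $(A_{2,v}-A_{1,v})\cdot\nu=0$ on the whole lateral boundary $\Sigma=\partial\Omega\times(0,T)$, whereas \eqref{tnl1aa} only imposes vanishing of $F_j$ and its $x$-derivatives at the parabolic corner $\partial\Omega\times\{0\}$; it gives no information about $\partial_vF_j(x,t,u_{F_j,k_v},\nabla_xu_{F_j,k_v})$ on $\partial\Omega$ for $t>0$, so no ``boundary analysis of the linearized IBVP'' based on \eqref{tnl1aa} can produce the normal-trace identity on $\Sigma$. In the paper this identity is not an a priori structural consequence at all: it is extracted from the data, namely the equality of the linearized DN maps forces $A_{1,v}=A_{2,v}$ on $\Sigma$ by the boundary determination result of Isakov (\cite[Lemma 8.2]{I4}), which is applicable because the linearized coefficients are sufficiently smooth.

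This points to the second, related misattribution in your argument: the role of \eqref{tnl1aa} is regularity, not boundary vanishing. The hypothesis \eqref{ttt5a} of Theorem \ref{t5}/Corollary \ref{c1} demands $A_{1,v}-A_{2,v}\in W^{1,\infty}(0,T;L^{p_1}(\Omega))^n$, i.e.\ Lipschitz control in time of $A_{j,v}=\partial_vF_j(x,t,u_{F_j,k_v},\nabla_xu_{F_j,k_v})$. Knowing only $u_{F_j,k_v}\in\mathcal C^{2+\alpha,1+\alpha/2}(\overline Q)$ gives $\nabla_xu_{F_j,k_v}$ merely H\"older (of exponent $(1+\alpha)/2$) in $t$, which is not enough; your claim that \eqref{ttt5a} ``follows from'' the stated smoothness of $F_j$ and $u_j$ is therefore unjustified. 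The paper fixes this through Lemma \ref{nll1}: because $k_v$ is affine and \eqref{tnl1aa} provides the corner compatibility, the time-differentiated problem \eqref{nll1c} is again solvable in $\mathcal C^{2+\alpha,1+\alpha/2}(\overline Q)$, so $\partial_t u_{F_j,k_v}$ (hence $\partial_t A_{j,v}$) is bounded and $A_{j,v}\in W^{1,\infty}(Q)$. With Lemma \ref{nll1} supplying \eqref{ttt5a}, and \cite[Lemma 8.2]{I4} supplying the boundary equality needed for \eqref{tttt5b}, the rest of your argument goes through as written; without these two ingredients the reduction to Corollary \ref{c1} is not complete.
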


\begin{rem}\label{r2} The result of Theorem \ref{tnl1} can be applied to the unique full recovery of quasilinear terms of the form \bel{cn2}F(x,u,\nabla_xu)=G_1(x,\nabla_xu)+G_2(x,t)u,\ee
when $G_2$ is known.\end{rem}

A direct consequence of Theorem \ref{tnl1} will be a partial data result in the spirit of Corollary \ref{c3}. More precisely, for any open set $\gamma$ of $\partial\Omega$, we define $\mathcal N_{F,\gamma}$ by
$$\mathcal N_{F,\gamma}G:=\mathcal N_F(G)_{|\gamma\times(0,T)},\quad G\in \mathcal X.$$
We denote also by $\mathcal X_{0,\gamma}$ the set $\mathcal X_{0,\gamma}:=\{G\in\mathcal X_0:\ \textrm{supp}(G_{|\Sigma})\subset \gamma\times[0,T]\}$.
Then, we deduce from Theorem \ref{tnl1} the following result.

\begin{cor}\label{cnl} Let the condition of Theorem \ref{tnl1} be fulfilled. 
Let $\Omega_*$ be an open connected subset of $\Omega$ satisfying $\partial\Omega\subset\partial\Omega_*$ and let $\gamma_1,\gamma_2$ be two arbitrary open subset of $\partial\Omega$. We assume that $F_1,F_2$ fulfill 
\bel{cnl1a}\partial_vF_1(x,t,u,v)=\partial_vF_2(x,t,u,v)=0,\quad (x,t)\in \Omega_*\times(0,T),\ u\in\R,\ v\in\R^n\ee
and \eqref{tnl1a}. Then the condition
\bel{cnl1b}\mathcal N'_{F_1,\gamma_2}(k_v|_{\Sigma_p})H=\mathcal N'_{F_2,\gamma_2}(k_v|_{\Sigma_p})H,\quad H\in\mathcal X_{0,\gamma_1},\  v\in\R^n,\ee
implies \eqref{tnl1c}.\end{cor}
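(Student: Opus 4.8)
\textbf{Proof proposal for Corollary \ref{cnl}.}

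The plan is to reduce the partial-data statement for the nonlinearity to the partial-data result for the associated linearized IBVP, namely Corollary \ref{c3}, exactly as Theorem \ref{tnl1} was reduced to Theorem \ref{t5} in the full-data case. First I would perform the standard first-order linearization of the map $G\mapsto u_{F_j,G}$ at the boundary datum $k_v|_{\Sigma_p}$: differentiating the nonlinear equation \eqref{1.1} in the direction $H\in\mathcal X_{0,\gamma_1}$, the function $w_j:=\mathcal U'_{F_j}(k_v|_{\Sigma_p})H$ (the Fr\'echet derivative of the solution operator) solves a linear parabolic IBVP of the form $\partial_tw-\Delta w+\partial_vF_j(x,t,u_{F_j,k_v},\nabla_xu_{F_j,k_v})\cdot\nabla_xw+\partial_uF_j(x,t,u_{F_j,k_v},\nabla_xu_{F_j,k_v})w=0$ in $Q$, with $w=H$ on $\Sigma_p$. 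Using \eqref{tnl1a}, the zeroth order coefficient is the same for $j=1,2$; so the only discrepancy between the two linearized problems lies in the convection coefficient $A_j(x,t):=\partial_vF_j(x,t,u_{F_j,k_v}(x,t),\nabla_xu_{F_j,k_v}(x,t))$. Moreover $\mathcal N'_{F_j,\gamma_2}(k_v|_{\Sigma_p})H=\partial_\nu w_j|_{\gamma_2\times(0,T)}$, so the hypothesis \eqref{cnl1b} says precisely that the partial DN maps of these two linear convection-diffusion problems agree on the relevant subspaces, i.e. $\Lambda_{A_1,0,q,\gamma_1,\gamma_2}=\Lambda_{A_2,0,q,\gamma_1,\gamma_2}$ with $q$ the common zeroth order coefficient (here $B_j=0$).

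Next I would check that the hypotheses of Corollary \ref{c3} are met for these induced coefficients. The regularity $A_j\in L^\infty(Q)^n$ and $\nabla_x\cdot A_j\in L^\infty(0,T;L^{p_1}(\Omega))$ follows from the $\mathcal C^{2+\alpha,1+\alpha/2}$ regularity of $F_j$ and of the solutions $u_{F_j,k_v}$, the Schauder theory quoted in Section 6, and the chain rule; the key point is that the assumed smoothness of $F_j$ and $\partial_vF_j$ was chosen exactly so that these composed coefficients are admissible. The coincidence condition \eqref{c3a} on the set $\Omega_*$ follows from \eqref{cnl1a}: since $\partial_vF_1=\partial_vF_2=0$ on $\Omega_*\times(0,T)$ we get $A_1=A_2=0$ there, and $q$ is common by construction. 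Applying Corollary \ref{c3} then yields $dA_1=dA_2$ on $Q$, and — since \eqref{ttt5a}-type regularity is available here thanks to the smoothness of the $F_j$ — in fact $A_2=A_1+2\nabla_x\phi$ for some $\phi\in W^{1,\infty}(Q)$ with $\phi|_\Sigma=0$. But $A_j=0$ on $\Omega_*$ which contains a neighborhood of $\partial\Omega$, so $\nabla_x\phi=0$ on $\Omega_*$; combined with $\phi|_\Sigma=0$ and connectedness of $\Omega_*$ this forces $\phi\equiv 0$ on $\Omega_*$, hence by unique continuation for the gradient on the connected set $\Omega$ one obtains $\phi$ constant, and then $A_1=A_2$ on all of $Q$.

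It remains to propagate this back to the nonlinearity. Evaluating the identity $A_1=A_2$ requires freedom in the arguments $(u_{F_j,k_v},\nabla_xu_{F_j,k_v})$; this is exactly where the affine boundary data $k_v(x)=x\cdot v$ and the structural assumption \eqref{tnl2a} enter. Under \eqref{tnl2a}, the convection coefficient $\partial_uF_1$ is affine in $u$ with $v$-independent slope and $\partial_v\partial_uF_1=0$, which (together with $k_v$ being the stationary-in-time linear profile $x\cdot v$, whose gradient is the constant vector $v$) lets me identify the traces of $u_{F_j,k_v}$ and $\nabla_xu_{F_j,k_v}$ at $t=0$ with $x\cdot v$ and $v$ respectively, and then run $v$ over $\R^n$ and use a further family of data to reach arbitrary $(u,v)$; this is the same density/parametrization argument used in the proof of Theorem \ref{tnl1}, and it produces \eqref{tnl1c}. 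The last assertion, that \eqref{tnl1e} at a single $v_0$ upgrades \eqref{tnl1c} to \eqref{tnl1f}, is then just integration of $\partial_vF_1=\partial_vF_2$ along segments from $v_0$, using \eqref{tnl1a} for the $u$-dependence — identical to the corresponding step in Theorem \ref{tnl1}. The main obstacle I anticipate is the unique-continuation/propagation argument for $\phi$ on the partial-data side: one must be careful that Corollary \ref{c3} delivers the gauge $\phi$ globally on $Q$ (not merely $dA_1=dA_2$) and that the normalization $\phi|_\Sigma=0$ together with vanishing of the $A_j$ near $\partial\Omega$ genuinely pins down $\phi$; this hinges on the connectedness of $\Omega_*$ and $\Omega$ and on the $W^{1,\infty}$ regularity of $\phi$, and it is the place where the hypotheses $\partial\Omega\subset\partial\Omega_*$ and \eqref{cnl1a} do the real work.
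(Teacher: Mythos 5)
Your overall strategy is the paper's: linearize at $k_v$, observe via \eqref{tnl1a}(--\eqref{tnl2a}) that the zeroth order coefficients $q_{1,v}=q_{2,v}$ coincide (this is \eqref{tnll4}), use \eqref{cnl1a} to get $A_{1,v}=A_{2,v}=0$ on $\Omega_*\times(0,T)$ so that Corollary \ref{c3} applies, and then conclude $A_{1,v}=A_{2,v}$ on all of $Q$ before sending $t\to0$ as in Theorem \ref{tnl1}. However, the step where you eliminate the gauge function is not valid as written. From Corollary \ref{c3} you only get $A_{2,v}=A_{1,v}+2\nabla_x\phi_v$ with $\phi_v|_\Sigma=0$; knowing in addition that $\nabla_x\phi_v=0$ on $\Omega_*\times(0,T)$ indeed gives $\phi_v\equiv0$ on $\Omega_*\times(0,T)$, but there is no ``unique continuation for the gradient'' of a bare $W^{1,\infty}$ (or even $W^{2,\infty}$) function: a function vanishing on a neighborhood of $\partial\Omega$ can have arbitrary nonzero gradient in $\Omega\setminus\overline{\Omega_*}$, so nothing so far prevents $A_{1,v}\neq A_{2,v}$ in the interior. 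Indeed, if such a propagation argument worked without further input, it would contradict the gauge obstruction of Section 1.3, which shows that equality of DN maps alone can never distinguish $A$ from $A+2\nabla_x\phi$ when $\phi$ vanishes on $\Sigma$.

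What is missing is the use of the \emph{second} identity in \eqref{t5c}. With $B_{1,v}=B_{2,v}=0$ and $q_{1,v}=q_{2,v}$ (from \eqref{tnll4}), that identity reduces to
\begin{equation*}
\pd_t\phi_v-\Delta_x\phi_v+\bigl(A_{1,v}+\nabla_x\phi_v\bigr)\cdot\nabla_x\phi_v=0\quad\textrm{in }Q,
\end{equation*}
and, since $A_{2,v}-A_{1,v}=2\nabla_x\phi_v$ vanishes near $\Sigma$ (or by the trace condition), one also has $\phi_v=\pd_\nu\phi_v=0$ on $\Sigma$. It is this linear parabolic equation for $\phi_v$, with vanishing lateral Cauchy data, to which one applies unique continuation for parabolic equations (exactly as in Corollaries \ref{c1} and \ref{c11}) to conclude $\phi_v\equiv0$, hence $A_{1,v}=A_{2,v}$ on $Q$. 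Once this is repaired, your concluding passage (letting $t\to0$, using the structure imposed by \eqref{tnl1a}--\eqref{tnl2a}, and upgrading \eqref{tnl1c} to \eqref{tnl1f} under \eqref{tnl1e}) matches the end of the proof of Theorem \ref{tnl1} and is fine.
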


\subsection{Comments about our results}

Let us first observe that, to the best of our knowledge, Theorem  \ref{t5} and its consequences, stated in Corollary \ref{c1} and \ref{c11}, are the first results of unique determination of general convection  terms  depending on both time and space variables. Actually, in Theorem  \ref{t5} we prove the  recovery of information  about the three coefficients $A,B,q$, provided  by   \eqref{t5c}, from $\Lambda_{A,B,q}$. According to the obstruction described in Section 1.3, this is the best one can expect for the simultaneous recovery of the three coefficients $A,B,q$.  Assuming that coefficients $B$ is known, Theorem  \ref{t5}  would be equivalent to the 
 unique determination of $A,q$ modulo the gauge invariance given by \eqref{t5c} with $B_1=B_2=0$. Moreover, combining the result of Theorem  \ref{t5} with unique continuation results, we obtain in Corollary \ref{c1} and \ref{c11} the full recovery of the convection term $A$ when $B,q$ are fixed. Note also that, in contrast to time-independent coefficients, our inverse problem can not be reduced to the recovery of coefficients  appearing in a steady state convection-diffusion equation from the associated DN map.

Not only Theorem \ref{t5} provides, for what seems to be the first time, a result of recovery of general first and zero order time-dependent coefficients appearing in a parabolic equation but it is also stated in a non smooth setting. Indeed, we only require the two vector valued coefficients $A,B$ to be bounded and we allow $q$ to have singularities with respect to the space variable. Moreover, we state our result in a general Lipschitz domain $\Omega$. Such general setting  make Theorem \ref{t5} suitable for many potentials application and the regularity of the coefficients $A,B,q$ can be compared to \cite{KU1} where one can find the best result known so far, in terms of regularity of the coefficients, about the recovery of similar coefficients for elliptic equations in a general bounded domain (see also \cite{HH}). Note that, assuming that $A,B$ are known and $A\in L^\infty(0,T;W^{2,\infty}(\Omega))^n\cap W^{1,\infty}(0,T;L^\infty(\Omega))^n$, $\nabla_x\cdot B\in L^\infty(Q)$, we can prove the recovery of more general zero order coefficient $q$. Actually, in that context,  using our approach, one can prove the recovery of coefficients $q$ lying in $L^\infty(0,T;L^{p}(\Omega))\cup \mathcal C([0,T];L^{\frac{2n}{3}}(\Omega))$, with $p > 2n/3$. However, like for elliptic equations (see \cite{KU1}) we can not reduce simultaneously the smoothness assumptions for the first and zero order coefficients under consideration. For this reason, we have proved first the recovery of the 2-form $dA$ associated with the convection term $A$ with the weakest regularity that allows our approach for all the coefficients $A$, $B$, $q$. Then, we have proved the recovery of information about the coefficients $(A,B,q)$, given by \eqref{t5c}, by increasing the regularity of the unknown part of the coefficients $B$ and $q$ (see \eqref{ttt5a}-\eqref{tttt5b}).

One of the main tools for the proof of Theorem \ref{t5}  are suitable solutions of \eqref{eq1} also called geometric optics (GO in short) solutions. Similar type of solutions have already been considered by \cite{CK2,I1} for the recovery of bounded zero order coefficients $q$. None of these constructions work with arbitrary variable coefficients of order 1 or non-bounded coefficient $q$. Therefore, we introduce a new construction, inspired by the approach of \cite{DKSU,KSU,KU1,STz} for elliptic equations, in order to overcome the presence of variable coefficients of order 1. More precisely, we derive first a new Carleman estimate stated  in Proposition \ref{pp1} from which we obtain  Carleman estimates in negative order Sobolev space stated in Proposition \ref{l1}, \ref{l8}. Applying Proposition \ref{l1}, \ref{l8}, we built our GO solutions by a duality argument and an application of the Hahn Banach theorem. In contrast to the analysis of  \cite{DKSU,KU1,STz} for elliptic equations, we need to consider  GO solutions that vanish on the top $\Omega^T$ or on the bottom $\Omega^0$ of the space-time cylindrical domain $Q$. For this purpose, we freeze the time variable and we work only with respect to the space variable for the construction of our GO solutions. Then, using the estimate  on $\Omega^T$ or  $\Omega^0$ of the Carleman estimates of  Proposition \ref{l1} we can apply Proposition \ref{l1}, \ref{l8} to functions vanishing only at  $t=T$ or  $t=0$. This additional constraint on $\Omega^T$ or  $\Omega^0$, makes an important difference between the construction of the so called  complex geometric optics solutions considered by \cite{DKSU,KU1,STz} for elliptic equations and our construction of the GO solutions for parabolic equations. Like in \cite{KU1} for elliptic equations, thanks to the estimate of the Laplacian in Proposition \ref{pp1},  we can apply our construction to coefficients with low regularity. Actually, we improve the construction of \cite{KU1} by extending our approach to unbounded zero order coefficients $q$. Note also that, quite surprisingly, in Proposition \ref{l1}, \ref{l1}, \ref{l8}, we obtain better estimates with respect to the space variables than what has been proved in \cite{KU1,STz},  for the 3-dimensinal case an averaging procedure provides an equivalent gain to ours, see \cite{HH}.

From the recovery of   $(A,B,q)$, in the sense  of \eqref{t5c}, stated in Theorem \ref{t5}, we derive three different results for the linear problem stated in Corollary \ref{c1}, \ref{c11}, \ref{c3}. In all these three results, we use unique continuation results for parabolic equations in order to derive conditions that guaranty $\phi=0$ in \eqref{t5c} or to obtain a density arguments in norm $L^2$ on a subdomain of $Q$. Using such arguments we can prove the full recovery of the convection term $A$ and prove the recovery of the gauge class of $(A,B,q)$ from measurements on an arbitrary portion of $\partial\Omega$ when $(A,B,q)$ are known on some neighborhood of $\Sigma$.

According to \cite[Lemma 8.1]{I4}, with additional regularity assumptions imposed to the coefficients $(A,B,q)$ and to the domain $\Omega$, the DN map $\Lambda_{A,B,q}$ determines $A\cdot\nu$ on $\Sigma$. Therefore, for sufficiently smooth coefficients $(A_j,B,q)$, $j=1,2$, and sufficiently smooth domain $\Omega$, the condition \eqref{tttt5b} can be removed from the statement of Corollary \ref{c1} and \ref{c11}. We believe that  the condition \eqref{tttt5b} can also be removed with  less regular coefficients and domain. However, we do not treat that issue in the present paper.

To the best of our knowledge, in Theorem \ref{tnl3}  and \ref{tnl1} we have stated the first results of recovery of a general quasi-linear  term  of the form $F(x,t,u,\nabla_xu)$ with $(x,t)\in Q$, that admits variation independent of the solutions inside the domain (i.e we recover the part $F(x,0,u,v)$ with $x\in\Omega$, $u\in\R$, $v\in\R^n$ of such functions) from measurements restricted to the lateral boundary. Indeed, one can apply our results to the unique  full recovery of nonlinear terms of the form \eqref{cn1} and \eqref{cn2} (see Remark \ref{r1} and \ref{r2} for more details). The only other comparable result is the one stated in \cite{I4} where the author proved the recovery of quasilinear terms $F(u,\nabla_xu)$ depending only on the solutions  on the abstract set
$$E:=\{(u(x,t),\nabla_xu(x,t)):\ (x,t)\in\Sigma,\ \partial_tu-\Delta u+F(u,\nabla_xu)=0\}.$$
 Therefore, our results, which correspond to   global recovery results, provide a more precise  information about the nonlinear terms under consideration than \cite{I4} where the uniqueness result is stated on the above  set $E$ which is not explicitly given. Moreover, our results  can  also be applied to  more general quasi-linear terms admitting variation independent of the solution inside the domain, while \cite{I4} restricts its analysis to  quasi-linear terms depending only on the solutions.

We prove Theorem \ref{tnl3}  and \ref{tnl1} by combining Theorem \ref{t5} and Corollary \ref{c1} with the linearization procedure described in  \cite{CK2,I3,I4,I5}. More precisely, we transform the recovery of the nonlinear term $F(x,t,u,\nabla_xu)$ into the recovery of time-dependent coefficients $A(x,t)=\partial_vF(x,t,u(x,t),\nabla_xu(x,t))$ and $q(x,t)=\partial_uF(x,t,u(x,t),\nabla_xu(x,t))$, where $u$ solves \eqref{1.1}. Here the variable $v\in\R^n$ corresponds to $\nabla_xu$ in the expression $F(x,t,u,\nabla_xu)$. In contrast to all other results stated for nonlinear parabolic equations (e.g. \cite{CK2,I2,I3,I4}), we do not need the full map  $\mathcal N_F$ for proving the recovery of the quasi-linear terms but only some partial knowledge of its Fr\'echet derivative $\mathcal N'_F$. More precisely, our results require only the knowledge of $\mathcal N'_F$ applied to restriction of linear or affine functions on $\Sigma_p$. By taking into account the important amount of information contained in the map $\mathcal N_F$, this makes an important restriction on the data used for solving the inverse problem. For this purpose, in contrast to \cite{I2,I3,I4}, we need to explicitly derive the Fr\'echet derivative of $\mathcal N_F$. A similar idea has been considered in \cite{Ki5} for the recovery of a semilinear term appearing in nonlinear hyperbolic equations. 

The result of Theorem \ref{tnl3} is stated for more general quasi-linear terms than the one of Theorem \ref{tnl1}. However, Theorem \ref{tnl3} can not be applied directly to the full recovery of the nonlinear term like in Theorem \ref{tnl1}. Indeed, Theorem \ref{tnl3} provide only some knowledge of the nonlinear term $F(x,t,u,\nabla_xu)$ given by the conditions \eqref{tnl3b}. On the other hand, with the additional condition \eqref{cnl2a}, we can derive form Theorem \ref{tnl3} the unique full recovery stated in Corollary \ref{cnl2}.

Applying Corollary \ref{c3}, we  also prove in Corollary \ref{cnl} recovery of nonlinear terms known on the neighborhood of the boundary from measurements on some arbitrary portion of the boundary $\partial\Omega$.
\subsection{Outline}

This paper is organized as follows. In Section 2, we obtain some preliminary results related to problem \eqref{eq1} and we give a rigorous definition of the DN map $\Lambda_{A,B,q}$. In Section 3, we derive a Carleman estimate associated with \eqref{eq1}. Section 4 is devoted to the construction of suitable GO solutions for \eqref{eq1} and its adjoint problem. The construction of these GO solutions requires Carleman estimates in negative order Sobolev spaces that we derive by applying the results of Section 3. In Section 5, we complete the proof of the results  for the linear problem stated in Theorem \ref{t5} and Corollary \ref{c1}, \ref{c11}, \ref{c3}. In Section 6, we prove our results for the nonlinear problem \eqref{eq111} stated in Theorem \ref{tnl3}, \ref{tnl1} and Corollary \ref{cnl2}, \ref{cnl}.

\section{Preliminary results}
We recall that $\Omega^0=\Omega\times\{0\}\subset Q$ and $\Omega^T=\Omega\times\{T\}\subset Q$.
Let us first consider the spaces 
$$\mathcal H_+:=\{v_{|\Sigma}:\ v\in H^1(0,T;H^{-1}(\Omega))\cap L^2(0,T;H^1(\Omega)),\ v_{|\Omega^0}=0\},$$ $$\mathcal H_-:=\{v_{|\Sigma}:\ v\in H^1(0,T;H^{-1}(\Omega))\cap L^2(0,T;H^1(\Omega)),\ v_{|\Omega^T}=0\}$$
which are subspaces of $L^2(0,T;H^{\frac{1}{2}}(\pd\Omega))$. We  introduce also the spaces
$$S_+=\{u\in H^1(0,T;H^{-1}(\Omega))\cap L^2(0,T;H^1(\Omega)):\ (\pd_t-\Delta_x)u=0,\ u_{|\Omega^0}=0\},$$
$$S_-=\{u\in H^1(0,T;H^{-1}(\Omega))\cap L^2(0,T;H^1(\Omega)):\ (-\pd_t-\Delta_x)u=0,\ u_{|\Omega^T}=0\}.$$
In order to define an appropriate topology  on $\mathcal H_\pm$ for our problem, we consider the following result.

\begin{prop}\label{p1} For all $f\in \mathcal H_\pm$  there exists a unique $u\in S_\pm$ such that $ u_{|\Sigma}=f$.\end{prop}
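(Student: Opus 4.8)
The plan is to establish existence and uniqueness separately, treating the two cases $\mathcal H_+$ (data on $\Sigma$, zero initial data at $t=0$, solve the forward heat equation) and $\mathcal H_-$ (data on $\Sigma$, zero final data at $t=T$, solve the backward heat equation) in parallel; by the time reversal $t\mapsto T-t$ it suffices to treat one of them, say $\mathcal H_+$ and $S_+$. So fix $f\in\mathcal H_+$. By definition of $\mathcal H_+$, there is a lift $v\in H^1(0,T;H^{-1}(\Omega))\cap L^2(0,T;H^1(\Omega))$ with $v_{|\Omega^0}=0$ and $v_{|\Sigma}=f$. We seek $u\in S_+$ with $u_{|\Sigma}=f$, and we look for it in the form $u=v+w$ where $w$ should solve the inhomogeneous heat equation with zero lateral and initial data:
\begin{equation*}
(\partial_t-\Delta_x)w=-(\partial_t-\Delta_x)v=:h\in L^2(0,T;H^{-1}(\Omega)),\qquad w_{|\Sigma}=0,\qquad w_{|\Omega^0}=0.
\end{equation*}
Note $h$ is indeed in $L^2(0,T;H^{-1}(\Omega))$ since $\partial_t v\in L^2(0,T;H^{-1}(\Omega))$ and $\Delta_x v\in L^2(0,T;H^{-1}(\Omega))$ by the mapping $\Delta_x:H^1(\Omega)\to H^{-1}(\Omega)$. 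By the standard well-posedness theory for the heat equation with homogeneous Dirichlet boundary condition (Lions--Magenes; see e.g. \cite{Ch}), this problem has a unique solution $w\in L^2(0,T;H^1_0(\Omega))\cap H^1(0,T;H^{-1}(\Omega))$. Then $u:=v+w$ lies in $H^1(0,T;H^{-1}(\Omega))\cap L^2(0,T;H^1(\Omega))$, satisfies $(\partial_t-\Delta_x)u=0$, has $u_{|\Omega^0}=v_{|\Omega^0}+w_{|\Omega^0}=0$ (the trace at $t=0$ being well defined since $u\in H^1(0,T;H^{-1}(\Omega))\cap L^2(0,T;H^1(\Omega))\hookrightarrow \mathcal C([0,T];L^2(\Omega))$), and $u_{|\Sigma}=f+0=f$. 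Hence $u\in S_+$ and $u_{|\Sigma}=f$, which proves existence.

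For uniqueness, suppose $u_1,u_2\in S_+$ both have trace $f$ on $\Sigma$. Then $z:=u_1-u_2\in H^1(0,T;H^{-1}(\Omega))\cap L^2(0,T;H^1(\Omega))$ solves $(\partial_t-\Delta_x)z=0$ with $z_{|\Sigma}=0$ and $z_{|\Omega^0}=0$; since $z_{|\Sigma}=0$ and $z\in L^2(0,T;H^1(\Omega))$ we get $z\in L^2(0,T;H^1_0(\Omega))$, and then testing the equation against $z$ and using the energy identity $\frac12\frac{d}{dt}\|z(\cdot,t)\|_{L^2(\Omega)}^2+\|\nabla_x z(\cdot,t)\|_{L^2(\Omega)}^2=0$ together with $z(\cdot,0)=0$ forces $z\equiv0$. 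This uses only the Lions--Magenes uniqueness for the homogeneous Dirichlet heat equation, which is the same ingredient as above. The case of $\mathcal H_-$, $S_-$ follows by applying the foregoing to $\tilde u(x,t):=u(x,T-t)$, which converts $(-\partial_t-\Delta_x)$ into $(\partial_t-\Delta_x)$ and exchanges the roles of $\Omega^0$ and $\Omega^T$.

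The only genuinely delicate point is making sure the traces on $\Sigma$, $\Omega^0$, $\Omega^T$ of elements of $H^1(0,T;H^{-1}(\Omega))\cap L^2(0,T;H^1(\Omega))$ are well defined in the right function spaces on a merely Lipschitz domain: the time-slice traces use the embedding into $\mathcal C([0,T];L^2(\Omega))$, while the lateral trace $v\mapsto v_{|\Sigma}$ into $L^2(0,T;H^{1/2}(\partial\Omega))$ uses the $H^1(\Omega)\to H^{1/2}(\partial\Omega)$ trace theorem on a Lipschitz domain applied for a.e.\ $t$. These are standard but should be invoked carefully; with them in hand, the decomposition $u=v+w$ reduces everything to textbook parabolic well-posedness with homogeneous boundary data, so I expect no further obstruction.
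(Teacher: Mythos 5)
Your proposal is correct and follows essentially the same route as the paper: lift $f$ to $v$ with vanishing trace on $\Omega^0$, correct by the solution $w$ of the inhomogeneous heat equation with zero initial and lateral data via the Lions--Magenes theory, and deduce uniqueness from the zero-data IBVP (your explicit energy identity is just a spelled-out version of that same uniqueness), with the $\mathcal H_-$ case handled by time reversal exactly as in the paper.
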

\begin{proof} Without lost of generality we assume that the functions are real valued. We will only prove the result for $f\in\mathcal H_+$, using similar arguments one can extend the result to $f\in\mathcal H_-$. Let $f\in \mathcal H_+$ and consider $F \in H^1(0,T;H^{-1}(\Omega))\cap L^2(0,T;H^1(\Omega))$ such that $ F_{|\Sigma}=f$ and $F_{|\Omega^0}=0$. Fix $G=-(\pd_t-\Delta_x)F\in L^2(0,T;H^{-1}(\Omega))$ and $w$ the solution of the IBVP
$$\left\{ \begin{array}{rcll} \partial_tw-\Delta_x w& = & G, & (x,t) \in Q ,\\ 

w_{\vert \Omega^0}&=&0,&\\
w_{\vert\Sigma}& = & 0.& \end{array}\right.$$
According to \cite[Theorem 4.1, Chapter 3]{LM1} this  IBVP  admits a unique solution $w\in H^1(0,T;H^{-1}(\Omega))\cap L^2(0,T;H^1(\Omega))$. Thus $v=w+F\in S_+$ and clearly $v_{|\Sigma}=w_{|\Sigma}+F_{|\Sigma}=f$. This prove the existence of $u\in S_+$ such that $ u_{|\Sigma}=f$. For the uniqueness, let $v_1,v_2\in S_+$ satisfies $\tau_0v_1=\tau_0v_2$. Then, $v=v_1-v_2$ solves 
$$\left\{ \begin{array}{rcll} \partial_tv-\Delta_x v& = & 0, & (x,t) \in Q ,\\ 

v_{\vert \Omega^0}&=&0,&\\
v_{\vert\Sigma}& = & 0.& \end{array}\right.$$
which from the uniqueness of this IBVP implies that $v_1-v_2=0$.\end{proof}

Following Proposition \ref{p1}, we consider the norm on $\mathcal H_\pm$ given by
$$ \norm{ F_{\vert\Sigma}}_{\mathcal H_\pm}^2=\norm{F}_{L^2(0,T;H^1(\Omega))}^2+\norm{F}_{H^1(0,T;H^{-1}(\Omega))}^2,\quad F\in S_\pm.$$
We introduce  the IBVPs
\begin{equation}\label{eeeq1}\left\{\begin{array}{ll}\partial_tu-\Delta_x u+A(x,t)\cdot\nabla_xu+[\nabla_x\cdot B(x,t)] u+q(x,t)u=0,\quad &\textrm{in}\ Q,\\  u(0,\cdot)=0,\quad &\textrm{in}\ \Omega,\\ u=g_+,\quad &\textrm{on}\ \Sigma,\end{array}\right.\end{equation}
\begin{equation}\label{eq2}\left\{\begin{array}{ll}-\partial_tu-\Delta_x u-A(x,t)\cdot\nabla_xu+(q(x,t)+[\nabla_x\cdot (B-A)(x,t)])u=0,\quad &\textrm{in}\ Q,\\  u(\cdot,T)=0,\quad &\textrm{in}\ \Omega,\\ u=g_-,\quad &\textrm{on}\ \Sigma.\end{array}\right.\end{equation}

We are now in position to state existence and uniqueness of solutions of these IBVPs for $g_\pm\in \mathcal H_\pm$.

\begin{prop}\label{p2} Let $g_\pm\in \mathcal H_\pm$,  $A,B\in L^\infty(Q)^n$, $q\in L^\infty(0,T;L^{\frac{2n}{3}}(\Omega))$. Then, the IBVP \eqref{eeeq1} $($respectively \eqref{eq2}$)$ admits a unique weak solution $u\in  H^1(0,T;H^{-1}(\Omega))\cap L^2(0,T;H^1(\Omega))$ $($respectively $u\in H_-$$)$ satisfying 
\begin{equation}\label{p2a}
\begin{aligned}&\norm{u}_{L^2(0,T;H^1(\Omega))}+\norm{u}_{H^1(0,T;H^{-1}(\Omega))}\leq C\norm{g_+}_{\mathcal H_+}\\
 &(\textrm{respectively }\norm{u}_{L^2(0,T;H^1(\Omega))}+\norm{u}_{H^1(0,T;H^{-1}(\Omega))}\leq C\norm{g_-}_{\mathcal H_-}),\end{aligned}
\end{equation}
where $C$ depends on $\Omega$, $T$ and $M\geq\norm{q}_{L^\infty(0,T;L^{\frac{2n}{3}}(\Omega))}+\norm{A}_{L^\infty (Q)^n}$.

\end{prop}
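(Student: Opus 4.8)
The plan is to establish existence, uniqueness, and the energy estimate \eqref{p2a} for the forward problem \eqref{eeeq1} by a standard lifting-plus-variational argument, treating the lower order terms as a perturbation that can be absorbed thanks to the Sobolev embedding $H^1(\Omega)\hookrightarrow L^{\frac{2n}{n-2}}(\Omega)$ (for $n\geq3$; a logarithmic or $L^r$ variant for $n=2$), which is exactly what makes $q\in L^\infty(0,T;L^{\frac{2n}{3}}(\Omega))$ a legitimate zero-order coefficient in the energy space. First I would reduce to homogeneous lateral data: given $g_+\in\mathcal H_+$, pick by definition the representative $F\in H^1(0,T;H^{-1}(\Omega))\cap L^2(0,T;H^1(\Omega))$ with $F_{|\Sigma}=g_+$, $F_{|\Omega^0}=0$ and $\norm{F}_{L^2(0,T;H^1(\Omega))}+\norm{F}_{H^1(0,T;H^{-1}(\Omega))}\leq C\norm{g_+}_{\mathcal H_+}$ (this norm control is precisely how $\mathcal H_+$ was normed after Proposition 2.1). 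Writing $u=v+F$, the unknown $v$ must solve
$$\partial_t v-\Delta_x v+A\cdot\nabla_x v+(\nabla_x\cdot B)v+qv = -\big(\partial_t F-\Delta_x F+A\cdot\nabla_x F+(\nabla_x\cdot B)F+qF\big)=:\mathcal F,\quad v_{|\Omega^0}=0,\ v_{|\Sigma}=0,$$
and the point is that $\mathcal F\in L^2(0,T;H^{-1}(\Omega))$ with $\norm{\mathcal F}_{L^2(0,T;H^{-1}(\Omega))}\leq C(M)\norm{g_+}_{\mathcal H_+}$: the terms $\partial_tF$, $A\cdot\nabla_xF$ are plainly in $L^2(0,T;H^{-1}(\Omega))$ (indeed in $L^2(Q)$ for the second), $\Delta_x F$ lies in $L^2(0,T;H^{-1}(\Omega))$, $(\nabla_x\cdot B)F$ pairs against $H^1_0$ after moving the divergence onto the test function, and $qF\in L^2(0,T;H^{-1}(\Omega))$ by Hölder with exponents tied to the embedding $H^1(\Omega)\hookrightarrow L^{\frac{2n}{n-2}}(\Omega)$.

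Next I would solve the homogeneous-boundary problem for $v$. One route is a Galerkin approximation in a basis of $H^1_0(\Omega)$: derive the a priori estimate by testing the equation against $v$, integrating in time, using $\int_Q (\nabla_x\cdot B)v\,v\,dxdt=-\tfrac12\int_Q B\cdot\nabla_x(v^2)\,dxdt=\tfrac12\int_Q(\nabla_x\cdot B)v^2\,dxdt$ only formally — better, keep $(\nabla_x\cdot B)v$ paired as $-B\cdot\nabla_x(v\psi)$ against test functions and estimate $|\int_Q B\cdot\nabla_x v\,v|\leq \norm{B}_{L^\infty}\norm{\nabla_x v}_{L^2(Q)}\norm{v}_{L^2(Q)}$, likewise $|\int_Q A\cdot\nabla_x v\,v|$, and $|\int_Q qv^2|\leq \norm{q}_{L^\infty_tL^{2n/3}_x}\norm{v}^2_{L^2_tL^{2n/(n-2)}_x}\leq \norm{q}_{L^\infty_tL^{2n/3}_x}\,C_{\mathrm{Sob}}^2\,\norm{v}^2_{L^2(0,T;H^1(\Omega))}$, then absorb a small multiple of $\norm{\nabla_x v}^2_{L^2(Q)}$ into the left side via Young's inequality and close with Grönwall in the $L^\infty_tL^2_x$ quantity. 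This gives $\norm{v}_{L^\infty(0,T;L^2(\Omega))}+\norm{v}_{L^2(0,T;H^1(\Omega))}\leq C(M)\norm{\mathcal F}_{L^2(0,T;H^{-1}(\Omega))}$; reading $\partial_t v$ off the equation then yields $\norm{v}_{H^1(0,T;H^{-1}(\Omega))}$ by the same bound. Passing to the limit in the Galerkin scheme (using Aubin–Lions for the nonlinear-looking but actually linear lower-order terms, or simply weak/weak-$*$ limits since everything is linear) produces a weak solution $v\in H^1(0,T;H^{-1}(\Omega))\cap L^2(0,T;H^1(\Omega))$ with $v_{|\Omega^0}=0$ (the trace at $t=0$ makes sense since this space embeds into $\mathcal C([0,T];L^2(\Omega))$). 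Uniqueness follows by applying the same energy estimate to the difference of two solutions with $\mathcal F=0$ and $g_+=0$, forcing $v\equiv0$. Setting $u=v+F$ and collecting the two norm bounds gives \eqref{p2a}.

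For the backward problem \eqref{eq2} the argument is identical after the time reversal $t\mapsto T-t$, which turns $-\partial_t-\Delta_x$ into $\partial_t-\Delta_x$ and the terminal condition $u(\cdot,T)=0$ into an initial condition; the coefficient $q+\nabla_x\cdot(B-A)$ plays the role of the zero-order term and, since $\nabla_x\cdot(B-A)$ enters only tested against $H^1_0$ functions exactly as $\nabla_x\cdot B$ did above, no new difficulty arises, and we land in $\mathcal H_-$ as claimed (the relevant representative space is the one from Proposition 2.1). The main obstacle — really the only place where care is needed — is the treatment of the potential term $qv^2$: one must verify that $\norm{q}_{L^\infty(0,T;L^{2n/3}(\Omega))}$ controls $q$ as a bounded perturbation of $-\Delta_x$ on $L^2(0,T;H^1(\Omega))$ with a constant depending only on $\Omega,T,M$ and, crucially, that the part of this bound proportional to $\norm{\nabla_x v}^2_{L^2(Q)}$ can be made arbitrarily small by interpolating $L^{2n/(n-2)}$ between $L^2$ and $H^1$ (a Gagliardo–Nirenberg step), so that it is genuinely absorbable rather than merely bounded; for $n=2$ the exponent $\tfrac{2n}{3}=\tfrac43$ must be handled with the corresponding borderline Sobolev embedding, and this is why $q\in L^\infty(0,T;L^{4/3}(\Omega))$ rather than $L^\infty(Q)$ is still admissible.
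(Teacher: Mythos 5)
Your overall route --- lift $g_+$ to a space-time extension with norm controlled by $\norm{g_+}_{\mathcal H_+}$, reduce to a homogeneous-boundary problem whose right-hand side lies in $L^2(0,T;H^{-1}(\Omega))$, solve it by an energy/variational argument in which $A\cdot\nabla_x$, $\nabla_x\cdot B$ (moved onto the test function) and $q$ are treated as absorbable perturbations, and finally read $\partial_tu$ off the equation --- is exactly the paper's. The paper takes the extension to be the caloric one $G\in S_+$, so $\partial_tG-\Delta_xG=0$ and the source consists only of the lower-order terms applied to $G$, and it invokes the Lions--Magenes theorem for a time-dependent sesquilinear form rather than re-running Galerkin; these are cosmetic differences, as is your time reversal for the backward problem.

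The one step that fails as written is the absorption of the potential term. Your bound $\abs{\int_Q qv^2\,dxdt}\leq \norm{q}_{L^\infty(0,T;L^{2n/3}(\Omega))}C_{\mathrm{Sob}}^2\norm{v}^2_{L^2(0,T;H^1(\Omega))}$ is valid (generalized H\"older on a bounded domain) but its coefficient is proportional to $\norm{q}_{L^\infty(0,T;L^{2n/3}(\Omega))}$ and hence not small, and the repair you propose --- ``interpolating $L^{2n/(n-2)}$ between $L^2$ and $H^1$'' --- gives nothing: $2n/(n-2)$ is the critical Sobolev exponent, so the Gagliardo--Nirenberg exponent on the $H^1$ factor is exactly $1$ and no smallness can be extracted, which is fatal when $\norm{q}$ is large. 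The correct observation (and what the paper does) is that H\"older against $q\in L^{2n/3}$ only requires $v$ in the subcritical space $L^{4n/(2n-3)}=L^{2n/(n-3/2)}$, since $\frac{3}{2n}+2\cdot\frac{2n-3}{4n}=1$; this space embeds from $H^{3/4}(\Omega)$, interpolation gives $\norm{v}^2_{H^{3/4}}\leq\bigl(\norm{v}^2_{H^1}\bigr)^{3/4}\bigl(\norm{v}^2_{L^2}\bigr)^{1/4}$, and Young's inequality then produces $\epsilon\norm{v}^2_{H^1}+C\epsilon^{-3}\norm{v}^2_{L^2}$ with $\epsilon$ at your disposal; the large zero-order remainder is handled by your Gr\"onwall step (the paper's shift $\lambda$ in the G\r{a}rding inequality \eqref{cor}). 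The same computation for $n=2$ gives the conjugate exponent $8$ and the embedding $H^{3/4}\hookrightarrow L^8$, so nothing borderline occurs there either. With this correction your argument closes and coincides with the paper's.
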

 \begin{proof} Since the proof of  the well-posedness result is similar for \eqref{eeeq1} and \eqref{eq2}, we will only treat  \eqref{eeeq1}. According to Proposition \ref{p1}, there exists a unique  $G\in S_+$ such that $G_{|\Sigma}=g_+$ and  $$\norm{G}_{L^2(0,T;H^1(\Omega))}\leq\norm{g_+}_{\mathcal H_+}.$$ We split  $u$ into two terms $u=w+G$ where $w$ solves
\bel{eq5}\left\{ \begin{array}{rcll} \partial_tw-\Delta_x w+A\cdot\nabla_xw+(\nabla_x\cdot B) w+qw& = & -A\cdot\nabla_x G-(\nabla_x\cdot B) G-qG, & (x,t) \in Q ,\\ 
w_{\vert \Omega^0}&=&0,&\\
w_{\vert\Sigma}& = & 0.& \end{array}\right.\ee
From the Sobolev embedding theorem we have $-A\cdot\nabla_x G-(\nabla_x\cdot B) G-qG\in L^2(0,T;H^{-1}(\Omega))$ with 
$$\begin{aligned}&\norm{-A\cdot\nabla_x G-(\nabla_x\cdot B) G-qG}_{L^2(0,T;H^{-1}(\Omega))}\\
&\leq C\left(\norm{A}_{L^\infty (Q)^n}+\norm{B}_{L^\infty (Q)^n}+\norm{q}_{L^\infty(0,T;L^{\frac{2n}{3}}(\Omega))}\right)\norm{G}_{L^2(0,T;H^{1}(\Omega))},\end{aligned}$$
with $C$ depending only on $\Omega$. Let $H=L^2(\Omega)$, $V=H^1_0(\Omega)$ and consider the time-dependent sesquilinear form $a(t,\cdot,\cdot)$ with domain $V$ and defined by
$$a(t,h,g)=\int_\Omega \nabla_x h(x)\cdot\overline{\nabla_x g(x)}+(A(x,t)\cdot\nabla_x h(x)+q(x,t)h(x))\overline{g(x)}-B(x,t)\cdot\nabla_x(h\overline{g})(x)dx,\quad h,g\in V.$$
Note that here for all $h,g\in V$, we have $t\mapsto  a(t,h,g)\in L^\infty(0,T)$ and an application of the Sobolev embedding theorem implies
\bel{con}|a(t,h,g)|\leq C\norm{h}_{H^1(\Omega)}\norm{g}_{H^1(\Omega)},\ee
with $C>0$ depending on $\norm{A}_{L^\infty(Q)}$, $\norm{B}_{L^\infty(Q)}$ and $\norm{q}_{L^\infty(0,T;L^{\frac{2n}{3}}(\Omega))}$.
In addition, there exists $\lambda, c>0$ such that, for any $h\in V$, we have
 \bel{cor}\re a(t,h,h)+\lambda\norm{h}^2_{L^2(\Omega)}\geq c\norm{h}^2_{H^1(\Omega)},\quad t\in(0,T).\ee
Indeed, for $h\in V$, $t\in(0,T)$ and $\epsilon_1\in(0,1)$, we have
$$\begin{aligned}\re a(t,h,h)&\geq\int_{\Omega}|\nabla_x h|^2dx-(\norm{A}_{L^\infty(Q)^n}+2\norm{B}_{L^\infty(Q)^n})\int_{\Omega}|\nabla_x h||h|dx-\int_\Omega|q(t,\cdot)||h|^2dx\\
\ &\geq (1-\epsilon_1)\int_{\Omega}|\nabla_x h|^2dx-\left(\frac{\left(\norm{A}_{L^\infty(Q)^n}+2\norm{B}_{L^\infty(Q)^n}\right)^2}{\epsilon_1}\right)\int_\Omega|h|^2dx-\int_\Omega|q||h|^2dx.\end{aligned}$$
In addition applying the H\"older inequality, the Sobolev embedding theorem and an interpolation between Sobolev spaces, for all $t\in(0,T)$, we get
$$\begin{aligned} \int_\Omega|q(t,\cdot)||h|^2dx&\leq \norm{q}_{L^\infty(0,T;L^{\frac{2n}{3}}(\Omega))}\norm{h}_{L^{\frac{2n}{n-\frac{3}{2}}}(\Omega)}^2\\
\ &\leq C\norm{q}_{L^\infty(0,T;L^{\frac{2n}{3}}(\Omega))}\norm{h}_{H^{\frac{3}{4}}(\Omega)}^2\\
\ &\leq C\norm{q}_{L^\infty(0,T;L^{\frac{2n}{3}}(\Omega))}\left(\norm{h}_{H^1(\Omega)}^2\right)^{\frac{3}{4}}\left(\norm{h}_{L^2(\Omega)}^2\right)^{\frac{1}{4}}\\
\ &\leq C\norm{q}_{L^\infty(0,T;L^{\frac{2n}{3}}(\Omega))}\left(\epsilon_1\norm{h}_{H^1(\Omega)}^2\right)^{\frac{3}{4}}\left(\epsilon_1^{-3}\norm{h}_{L^2(\Omega)}^2\right)^{\frac{1}{4}}\\
\ &\leq C\norm{q}_{L^\infty(0,T;L^{\frac{2n}{3}}(\Omega))}\left(\frac{3\epsilon_1}{4}\norm{h}_{H^1(\Omega)}^2+\frac{\epsilon_1^{-3}}{4}\norm{h}_{L^2(\Omega)}^2\right),\end{aligned}$$
with $C>0$ depending only on $\Omega$. Therefore, choosing
$$\epsilon_1=\left(C\norm{q}_{L^\infty(0,T;L^{\frac{2n}{3}}(\Omega))}+2\right)^{-1},\quad c=1-\epsilon_1(C\norm{q}_{L^\infty(0,T;L^{\frac{2n}{3}}(\Omega))}+1),$$
$$\lambda= \frac{\left(\norm{A}_{L^\infty(Q)^n}+2\norm{B}_{L^\infty(Q)^n}\right)^2}{\epsilon_1}+C\norm{q}_{L^\infty(0,T;L^{\frac{2n}{3}}(\Omega))}\epsilon_1^{-3}+2,$$
we get \eqref{cor}. Combining \eqref{con}-\eqref{cor} with the fact that 
$$a(t,h,g)=\left\langle -\Delta_x h+A(\cdot,t)\cdot\nabla_xh+(\nabla_x\cdot B(\cdot,t))h+q(\cdot,t)h,g\right\rangle_{H^{-1}(\Omega),H^1_0(\Omega)},\quad t\in(0,T),$$
we deduce from \cite[Theorem 4.1, Chapter 3]{LM1} that  problem \eqref{eq5} admits a unique solution $w\in H^1(0,T;H^{-1}(\Omega))\cap L^2(0,T;H^1(\Omega))$ satisfying
$$\begin{aligned}\norm{w}_{L^2(0,T;H^1(\Omega))}+\norm{w}_{H^1(0,T;H^{-1}(\Omega))}&\leq C\norm{-A\cdot\nabla_x G-(\nabla_x\cdot B) G-qG}_{L^2(0,T;H^{-1}(\Omega))}\\
\ &\leq C\norm{G}_{L^2(0,T;H^1(\Omega))}\leq C\norm{g_+}_{\mathcal H_+},\end{aligned}$$
where $C$ depends on $\Omega$, $T$ and $M$. Therefore, $u=w+G$ is the unique solution of \eqref{eeeq1} and the above  estimate  implies \eqref{p2a}. \end{proof}

Using these properties, we would like to give a suitable definition of the normal derivative of solutions of  \eqref{eq1}. For this purpose, following \cite{KU1} we will give a variational sense to the normal derivative for solutions of these problems. For this purpose, we start by considering the spaces
$$
H^{1/2}(\sqcup) := \{ \tilde{g}|_{\Sigma} : \tilde{g} \in H^{1/2}(\partial Q),\, \textrm{supp} (\tilde{g}) \subset \partial Q \setminus \Omega^T \}.
$$
We use the symbols $\sqcup$ because it turns out to be convenient to keep 
in mind that the corresponding functions vanish on  $\Omega^T:=\Omega\times\{T\}$. Note that the norms
$$\| g \|_{H^{1/2}(\sqcup)} := \inf \{ \| \tilde{g} \|_{H^{1/2}(\partial Q)} : 
\tilde{g}|_\Sigma = g, \, \textrm{supp} (\tilde{g}) \subset \partial Q \setminus \Omega^T  
\}$$
makes $H^{1/2}(\sqcup)$ be a Banach space. We recall that there exists a lifting operator $L:H^{1/2}(\sqcup)\longrightarrow \{w\in H^1(Q):\ w_{|\Omega^T}=0\}$ such that $L$ is a bounded operator and $$Lg_{|\Sigma}=g,\quad g\in H^{1/2}(\sqcup).$$

For any $g_+\in \mathcal H_+$ and $u\in H^1(0,T;H^{-1}(\Omega))\cap L^2(0,T;H^1(\Omega))$ the solution of \eqref{eeeq1}, we define $N_{A,B,q}u\in H^{1/2}(\sqcup)^*$, where $H^{1/2}(\sqcup)^*$ denotes the dual space of $H^{1/2}(\sqcup)$, by 
\bel{Nor}\begin{aligned}&\left\langle N_{A,B,q}u,g_-\right\rangle_{H^{1/2}(\sqcup)^*,H^{1/2}(\sqcup)}\\
&=\int_Q[-u\pd_tLg_-+\nabla_xu\cdot\nabla_xLg_-+A\cdot\nabla_xuLg_--B\cdot\nabla_x (uLg_-)+quLg_-]dxdt.\end{aligned}\ee
Note that, for $w\in H^1(Q)$ satisfying $w_{|\Omega^T}=0$ and $w_{|\Sigma}=0$, since $u\in H^1(0,T;H^{-1}(\Omega))\cap L^2(0,T;H^1(\Omega))$ solves \eqref{eeeq1}, we have
$$\begin{aligned}&\int_Q[-u\pd_tw+\nabla_xu\cdot\nabla_xw+A\cdot\nabla_xuw-B\cdot\nabla_x (uw)+quw]dxdt\\
\ &= \left\langle \pd_tu-\Delta u+A\cdot\nabla_xu+(\nabla_x\cdot B)u+qu,w\right\rangle_{L^2(0,T;H^{-1}(\Omega)),L^2(0,T;H^1_0(\Omega))}=0.\end{aligned}$$
Therefore, \eqref{Nor} is well defined since the right hand side of this identity depends only on $g_-$. We define the DN map associated with \eqref{eeeq1} by
$$\Lambda_{A,B,q}:\mathcal H_+\ni g_+\mapsto N_{A,B,q}u\in H^{1/2}(\sqcup)^*$$
and, applying Proposition \ref{p2} one can check that this map is continuous from $\mathcal H_+$ to $H^{1/2}(\sqcup)^*$.
By density, we derive the following representation formula

\begin{prop}\label{p3} For $j=1,2$, let $A_j,\ B_j\in L^\infty(Q)^n$, $q_j\in L^\infty(0,T;L^{\frac{2n}{3}}(\Omega))$.  Then, the operator $\Lambda_{A_1,B_1,q_1}-\Lambda_{A_2,B_2,q_2}$ can be extended to a bounded operator from $\mathcal H_+$ to   $\mathcal H_-^*$, where $\mathcal H_-^*$ denotes the dual space of $\mathcal H_-$. Moreover, for $g_+\in \mathcal H_+$, $g_-\in \mathcal H_-$, we consider $u_1\in H^1(0,T;H^{-1}(\Omega))\cap L^2(0,T;H^1(\Omega))$ the solution of \eqref{eeeq1} with $A=A_1$, $B=B_1$, $q=q_1$ and $u_2\in H^1(0,T;H^{-1}(\Omega))\cap L^2(0,T;H^1(\Omega))$ be the solution of \eqref{eq2} with $A=A_2$, $B=B_2$, $q=q_2$. Then we have
\begin{equation}\label{p3a}\begin{aligned} &\left\langle(\Lambda_{A_1,B_1,q_1}-\Lambda_{A_2,B_2,q_2})g_+, g_-  \right\rangle_{\mathcal H_-^*,\mathcal H_-}\\
&=\int_Q(A_1-A_2)\cdot\nabla_xu_1u_2dxdt-\int_Q(B_1-B_2)\cdot\nabla_x(u_1u_2)dxdt+\int_Q(q_1-q_2)u_1u_2dxdt.\end{aligned}\end{equation}
\end{prop}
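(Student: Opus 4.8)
The plan is to establish the representation formula \eqref{p3a} first for lateral data $g_-$ in the dense subspace $H^{1/2}(\sqcup)\subset\mathcal H_-$, for which the $H^1(Q)$-valued lifting operator $L$ of Section~2 is at hand, and then to pass to general $g_-\in\mathcal H_-$ (together with the boundedness into $\mathcal H_-^*$) by density. Fix $g_+\in\mathcal H_+$ and, for $j=1,2$, abbreviate
$$\Phi_j(u,\psi):=-u\,\pd_t\psi+\nabla_xu\cdot\nabla_x\psi+A_j\cdot\nabla_xu\,\psi-B_j\cdot\nabla_x(u\psi)+q_ju\psi ,$$
so that \eqref{Nor} reads $\langle N_{A_j,B_j,q_j}u,g_-\rangle=\int_Q\Phi_j(u,Lg_-)\,dxdt$ whenever $u$ solves \eqref{eeeq1} with coefficients $(A_j,B_j,q_j)$.

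First I would introduce three auxiliary solutions via Proposition~\ref{p2}: $u_1$ solving \eqref{eeeq1} with $(A_1,B_1,q_1)$ and data $g_+$; the solution $v$ of \eqref{eeeq1} with $(A_2,B_2,q_2)$ and the same data $g_+$, so that $\Lambda_{A_2,B_2,q_2}g_+=N_{A_2,B_2,q_2}v$; and $u_2$ solving the adjoint problem \eqref{eq2} with $(A_2,B_2,q_2)$ and data $g_-$. The difference $w:=u_1-v$ then lies in $H^1(0,T;H^{-1}(\Omega))\cap L^2(0,T;H^1(\Omega))$, vanishes on $\Omega^0$ and on $\Sigma$ (so $w\in L^2(0,T;H^1_0(\Omega))$), and, subtracting the two equations, solves weakly
$$\pd_tw-\Delta_xw+A_2\cdot\nabla_xw+(\nabla_x\cdot B_2)w+q_2w=-\big[(A_1-A_2)\cdot\nabla_xu_1+(\nabla_x\cdot(B_1-B_2))u_1+(q_1-q_2)u_1\big],$$
the right-hand side belonging to $L^2(0,T;H^{-1}(\Omega))$ by Hölder's inequality and Sobolev embedding. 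Decomposing $\Phi_1(u_1,Lg_-)-\Phi_2(v,Lg_-)=\big[\Phi_1(u_1,Lg_-)-\Phi_2(u_1,Lg_-)\big]+\Phi_2(w,Lg_-)$ and using \eqref{Nor} gives
$$\langle(\Lambda_{A_1,B_1,q_1}-\Lambda_{A_2,B_2,q_2})g_+,g_-\rangle=R(Lg_-)+\int_Q\Phi_2(w,Lg_-)\,dxdt ,$$
where $R(\psi):=\int_Q\big[(A_1-A_2)\cdot\nabla_xu_1\,\psi-(B_1-B_2)\cdot\nabla_x(u_1\psi)+(q_1-q_2)u_1\psi\big]dxdt$ is exactly the right side of \eqref{p3a} evaluated at a test function $\psi$.

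Next I would replace the lift $Lg_-$ by the ``generalised lift'' $u_2$. The crucial point is that $\phi:=Lg_--u_2$ has zero trace on $\Sigma$, hence lies in $L^2(0,T;H^1_0(\Omega))\cap H^1(0,T;H^{-1}(\Omega))$ and vanishes on $\Omega^T$, so all the pairings below are legitimate. Testing the weak equation for $w$ against $\phi$ (the $t$-boundary term vanishing since $w|_{\Omega^0}=0$ and $\phi|_{\Omega^T}=0$) gives $\int_Q\Phi_2(w,\phi)\,dxdt=-R(\phi)$, equivalently $R(Lg_-)+\int_Q\Phi_2(w,Lg_-)=R(u_2)+\int_Q\Phi_2(w,u_2)$. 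On the other hand the elementary identity $\Phi_2(w,\psi)-\Phi_2^{*}(\psi,w)=-\pd_t(w\psi)$, where
$$\Phi_2^{*}(\psi,w):=\psi\,\pd_tw+\nabla_x\psi\cdot\nabla_xw-A_2\cdot\nabla_x\psi\,w+q_2\psi w-(B_2-A_2)\cdot\nabla_x(\psi w)$$
is the integrand of the weak formulation of \eqref{eq2} with $(A_2,B_2,q_2)$, together with $u_2|_{\Omega^T}=0$, $w|_{\Omega^0}=0$ and $w|_\Sigma=0$, yields $\int_Q\Phi_2(w,u_2)\,dxdt=\int_Q\Phi_2^{*}(u_2,w)\,dxdt=0$, because $u_2$ solves \eqref{eq2} and $w$ is an admissible test function for it. Hence $\langle(\Lambda_{A_1,B_1,q_1}-\Lambda_{A_2,B_2,q_2})g_+,g_-\rangle=R(u_2)$, which is exactly \eqref{p3a} for $g_-\in H^{1/2}(\sqcup)$.

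Finally I would run a density argument. By Proposition~\ref{p2}, $u_1$ and $u_2$ are bounded in $L^2(0,T;H^1(\Omega))$ (hence in $L^2(Q)$) by $C\|g_+\|_{\mathcal H_+}$ and $C\|g_-\|_{\mathcal H_-}$ respectively; combining this with $A_j,B_j\in L^\infty(Q)^n$, the Sobolev embedding $H^1(\Omega)\hookrightarrow L^{2n/(n-2)}(\Omega)$ ($n\geq3$; any Lebesgue exponent when $n=2$), Hölder's inequality and $q_1-q_2\in L^\infty(0,T;L^{2n/3}(\Omega))$, one gets $|R(u_2)|\leq C\|g_+\|_{\mathcal H_+}\|g_-\|_{\mathcal H_-}$. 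Therefore the functional $g_-\mapsto\langle(\Lambda_{A_1,B_1,q_1}-\Lambda_{A_2,B_2,q_2})g_+,g_-\rangle$, a priori only an element of $H^{1/2}(\sqcup)^*$, is bounded in the $\mathcal H_-$-norm on the dense subspace $H^{1/2}(\sqcup)\subset\mathcal H_-$; it thus extends uniquely to an element of $\mathcal H_-^*$, is represented there by the same formula $R(u_2)$, and obeys $\|(\Lambda_{A_1,B_1,q_1}-\Lambda_{A_2,B_2,q_2})g_+\|_{\mathcal H_-^*}\leq C\|g_+\|_{\mathcal H_+}$, giving the asserted boundedness. I expect the main obstacle to be the bookkeeping in the replacement step: since $u_2$ solves only the weak form of \eqref{eq2} and belongs to $H^1(0,T;H^{-1})\cap L^2(0,T;H^1)$ rather than to $H^1(Q)$, it cannot be fed directly into \eqref{Nor}, and one must check with care — using the Lions--Magenes embedding into $\mathcal C([0,T];L^2(\Omega))$ for the time-boundary terms and the precise function spaces of $w$ and $Lg_--u_2$, or else a mollification of $u_1$, $u_2$ and the coefficients — that every integration by parts and duality pairing above is meaningful, after which the identity is no more than Green's formula.
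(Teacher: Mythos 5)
Your argument is correct and takes essentially the same route as the paper's proof: introduce the auxiliary solution $v$ of \eqref{eeeq1} with coefficients $(A_2,B_2,q_2)$, use that the pairing is unchanged when the lift of $g_-$ is modified by an $H^1_0$-valued function vanishing on $\Omega^T$ so that $Lg_-$ may be replaced by $u_2$, let the adjoint equation \eqref{eq2} annihilate the remaining bilinear term, and conclude by the continuity bound together with density of $H^{1/2}(\sqcup)$ in $\mathcal H_-$. The only differences are organizational and harmless: the paper first substitutes the free backward-caloric extension $\tilde w\in S_-$ to obtain the $\mathcal H_-^*$-bound before passing to $u_2$, whereas you derive the bound directly from $R(u_2)$, and the technical point you flag about the pairing $\left\langle \partial_t w,u_2\right\rangle$ is avoided in the paper by keeping the time derivative on $u_2$ (i.e.\ using the weak form of \eqref{eq2} directly rather than the symmetric identity for $\partial_t(wu_2)$).
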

\begin{proof} Without loss of generality we assume that all the functions are real valued.
We consider $v_2\in H^1(0,T;H^{-1}(\Omega))\cap L^2(0,T;H^1(\Omega))$ solving  
$$\left\{\begin{array}{ll}\partial_tv_2-\Delta_x v_2+A_2(x,t)\cdot\nabla_xv_2+[\nabla_x\cdot B_2(x,t)] v_2+q_2(x,t)v_2=0,\quad &\textrm{in}\ Q,\\  v_2(0,\cdot)=0,\quad &\textrm{in}\ \Omega,\\ v_2=g_+,\quad &\textrm{on}\ \Sigma.\end{array}\right.$$
 Then, for any $g_-\in H^{1/2}(\sqcup)$ fixing $w=Lg_-\in H^1(Q)$, we find
$$\begin{aligned} &\left\langle(\Lambda_{A_1,B_1,q_1}-\Lambda_{A_2,B_2,q_2})g_+, g_-  \right\rangle_{H^{1/2}(\sqcup)^*,H^{1/2}(\sqcup)}\\
&=\left\langle(N_{A_1,B_1,q_1}u_1-N_{A_2,B_2,q_2}v_2, g_-  \right\rangle_{H^{1/2}(\sqcup)^*,H^{1/2}(\sqcup)}\\
&=\int_Q[-(u_1-v_2)\pd_tw+\nabla_x(u_1-v_2)\cdot\nabla_xw+A_2\cdot\nabla_x(u_1-v_2)w-B_2\cdot\nabla_x ((u_1-v_2)w)+q_2(u_1-v_2)w]dxdt\\
&\ \ \ +\int_Q[(A_1-A_2)\cdot\nabla_xu_1w-(B_1-B_2)\cdot\nabla_x (u_1w)+(q_1-q_2)u_1w]dxdt\end{aligned}$$
Now using the fact that $(u_1-v_2)\in L^2(0,T; H^1_0(\Omega))$, we get
\begin{equation}
\begin{aligned}&\left\langle(\Lambda_{A_1,B_1,q_1}-\Lambda_{A_2,B_2,q_2})g_+, g_-  \right\rangle_{H^{1/2}(\sqcup)^*,H^{1/2}(\sqcup)}\\
&=-\left\langle \pd_tw,u_1-v_2 \right\rangle_{L^2(0,T;H^{-1}(\Omega)),L^2(0,T;H_0^1(\Omega))}+\int_Q\nabla_x (u_1-v_2)\cdot\nabla_xwdxdt\\
\ &\ \ \ +\int_QA_2\cdot\nabla_x (u_1-v_2)wdxdt-\int_QB_2\cdot\nabla_x [(u_1-v_2)w]dxdt+\int_Qq_2(u_1-v_2)wdxdt\\
&\ \ \ +\int_Q[(A_1-A_2)\cdot\nabla_xu_1w-(B_1-B_2)\cdot\nabla_x (u_1w)+(q_1-q_2)u_1w]dxdt\end{aligned}
\label{id:identity_extension}
\end{equation}
We can choose $\tilde w$ to be the unique element of $S_-$ satisfying $\tilde w_{|\Sigma}=g_-$, and since $w - \tilde{w} \in L^2(0,T;H^1_0(\Omega))$ and $w - \tilde{w}|_{\Omega^T} = 0$, $w$ can be replaced by $\tilde{w}$ in the identity \eqref{id:identity_extension}. Moreover, we have
$$\begin{aligned}\abs{\left\langle(\Lambda_{A_1,B_1,q_1}-\Lambda_{A_2,B_2,q_2})g_+, g_-  \right\rangle_{H^{1/2}(\sqcup)^*,H^{1/2}(\sqcup)}}&\leq C\norm{g_+}_{\mathcal H_+}(\norm{\tilde w}_{L^2(0,T;H^1(\Omega))}+\norm{\tilde w}_{H^1(0,T;H^{-1}(\Omega))})\\
\ &\leq C\norm{g_+}_{\mathcal H_+}\norm{g_-}_{\mathcal H_-},\end{aligned}$$
where $C$ depends only on $A_j$, $B_j$, $q_j$, $j=1,2$, $T$ and $\Omega$. From this identity, we deduce that the map $\Lambda_{A_1,B_1,q_1}-\Lambda_{A_2,B_2,q_2}$ can be extended continuously to a continuous linear map from $\mathcal H_+$ to   $\mathcal H_-^*$ and the identity \eqref{id:identity_extension} holds for $g_- \in \mathcal{H}_-$, whose extension $w$ to $Q$ belongs to $H^1(0,T; H^{-1}(\Omega)) \cap L^2(0,T; H^1(\Omega))$. Thus, we are allowed to replace $w$ in \eqref{id:identity_extension} by $u_2$. Since $u_2$ satisfies the identity below, the proposition is proved:
$$\begin{aligned}&-\ \left\langle \pd_tu_2,u_1-v_2 \right\rangle_{L^2(0,T;H^{-1}(\Omega)),L^2(0,T;H_0^1(\Omega))}+\int_Q\nabla_x (u_1-v_2)\cdot\nabla_xu_2dxdt\\
&=\left\langle -\pd_tu_2-\Delta u_2,u_1-v_2 \right\rangle_{L^2(0,T;H^{-1}(\Omega)),L^2(0,T;H_0^1(\Omega))}\\
&=\left\langle A_2\cdot\nabla_x u_2,u_1-v_2 \right\rangle+\left\langle \nabla_x\cdot(A_2-B_2),(u_1-v_2)u_2 \right\rangle_{L^2(0,T;H^{-1}(\Omega)),L^2(0,T;H_0^1(\Omega))}-\int_Qq_2(u_1-v_2)u_2dxdt\\
&=\left\langle \nabla_x\cdot(u_2A_2 )- \nabla_x\cdot(B_2)u_2,u_1-v_2 \right\rangle_{L^2(0,T;(H^{-1}(\Omega)),L^2(0,T;H^1_0(\Omega))}-\int_Qq_2(u_1-v_2)u_2dxdt\\
&=-\int_Q(A_2\cdot\nabla_x (u_1-v_2)u_2dxdt+\int_QB_2\cdot\nabla_x [(u_1-v_2)u_2]dxdt-\int_Qq_2(u_1-v_2)u_2dxdt.\end{aligned}$$

\end{proof}

\section{Carleman estimates}

We introduce two parameters $s,\rho\in(1,+\infty)$ and we consider, for $\rho>s>1$, the perturbed weight
\bel{phi}\phi_{\pm,s}(x,t):=\pm (\rho^2t+\rho\omega\cdot x)-s{((x+ x_0)\cdot\omega)^2\over 2}.\ee
We define
\[L_{\pm, A}=\pm\partial_t-\Delta_x\pm A\cdot\nabla_x,\quad P_{A,\pm,s}:=e^{-\phi_{\pm,s}}L_{\pm,A}e^{\phi_{\pm,s}}.\]
Here $x_0\in\R^n$ is chosen in such a way that 
\bel{x0}x_0\cdot\omega=2+\sup_{x\in\Omega}|x|.\ee
The goal of this section is to prove the following Carleman estimates.
\begin{prop}\label{pp1} Let $A\in L^\infty(Q)^n$ and $\Omega$ be $\mathcal C^2$. Then  there exist $s_1>1$ and, for $s>s_1$,  $\rho_1(s)$ such that for any $v\in\mathcal C^2(\overline{Q})$ satisfying the condition  
\begin{equation}\label{t2a}v_{\vert \Sigma}=0,\quad v_{\vert \Omega^0}=0,\end{equation}
the estimate
\bel{lll1a} \begin{aligned}&\rho\int_{\Sigma_{+,\omega}} |\partial_\nu v|^2|\omega\cdot\nu| d\sigma(x)dt+s\rho \int_\Omega|v|^2(x,T)dx+s^{-1}\int_Q|\Delta_x v|^2dxdt+s\rho^2\int_Q|v|^2dxdt\\
&\leq C\left[\norm{P_{A,+,s}v}^2_{L^2(Q)}+\rho\int_{\Sigma_{-,\omega}} |\partial_\nu v|^2|\omega\cdot\nu| d\sigma(x)dt\right]\end{aligned}\ee
holds true for $s>s_1$, $\rho\geq \rho_1(s)$  with $C$  depending only on  $\Omega$, $T$ and $M\geq \norm{A}_{L^\infty(Q)^n}$.
In the same way, there exist  $s_2>1$ and, for $s>s_2$,  $\rho_2(s)$ such that for any $v\in\mathcal C^2(\overline{Q})$ satisfying the condition \begin{equation}\label{t2c}v_{\vert \Sigma}=0,\quad v_{\vert \Omega^T}=0,\end{equation}
the estimate
\bel{lll1b} \begin{aligned}&\rho\int_{\Sigma_{-,\omega}} |\partial_\nu v|^2|\omega\cdot\nu| d\sigma(x)dt+s\rho \int_\Omega|v|^2(x,0)dx+s^{-1}\int_Q|\Delta_x v|^2dxdt+s\rho^2\int_Q|v|^2dxdt\\
&\leq C\left[\norm{P_{A,-,s}v}^2_{L^2(Q)}+\rho\int_{\Sigma_{+,\omega}} |\partial_\nu v|^2|\omega\cdot\nu| d\sigma(x)dt\right]\end{aligned}\ee
holds true for $s>s_2$, $\rho\geq \rho_2(s)$. Here $s_1$, $\rho_1$, $s_2$ and $\rho_2$ depend only on  $\Omega$, $T$ and $M\geq \norm{A}_{L^\infty(Q)^n}$. 
\end{prop}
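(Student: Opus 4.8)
Throughout I write $\psi(x):=(x+x_0)\cdot\omega$, so that by \eqref{x0} one has $2\le\psi\le M_\Omega:=2+2\sup_{x\in\Omega}\abs{x}$ on $\overline\Omega$, together with $\nabla_x\psi=\omega$ and $\Delta_x\psi=0$. The plan is to prove \eqref{lll1a} first for the pure heat operator $\pd_t-\Delta_x$ and then to treat the first order term $A\cdot\nabla_x$ as a perturbation; estimate \eqref{lll1b} is obtained by the same argument applied to $L_{-,A}$ and $\phi_{-,s}$, exchanging the roles of $\Sigma_{+,\omega}$ and $\Sigma_{-,\omega}$ and of $t=0$ and $t=T$. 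A direct computation of $P_{0,+,s}:=e^{-\phi_{+,s}}(\pd_t-\Delta_x)e^{\phi_{+,s}}$, using $\nabla_x\phi_{+,s}=(\rho-s\psi)\omega$, $\pd_t\phi_{+,s}=\rho^2$ and $\Delta_x\phi_{+,s}=-s$, gives
\[P_{0,+,s}v=\pd_tv-\Delta_xv-2(\rho-s\psi)\,\omega\cdot\nabla_xv+\big(2s\rho\psi-s^2\psi^2+s\big)v,\]
and I would split it as $P_{0,+,s}=\mathcal S+\mathcal A$ into its formally self-adjoint and skew-adjoint parts
\[\mathcal S:=-\Delta_x+2s\rho\psi-s^2\psi^2,\qquad\mathcal A:=\pd_t-2(\rho-s\psi)\,\omega\cdot\nabla_x+s,\]
the constant $+s$ in $\mathcal A$ and the compensating $-s$ absorbed into $\mathcal S$ coming from the self-adjoint part of the first order term, since $\omega\cdot\nabla_x\big({-}2(\rho-s\psi)\big)=2s$.

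The heart of the matter is the identity $\norm{P_{0,+,s}v}_{L^2(Q)}^2=\norm{\mathcal Sv}_{L^2(Q)}^2+\norm{\mathcal Av}_{L^2(Q)}^2+2\Pre{\langle\mathcal Sv,\mathcal Av\rangle}$ together with the integration by parts of the cross term. Since $v\in\mathcal C^2(\overline Q)$ vanishes on $\Sigma$ and on $\Omega^0$ (so $\nabla_xv=(\pd_\nu v)\nu$ on $\Sigma$, and $v,\nabla_xv$ vanish on $\Omega^0$), the bulk part of $2\Pre{\langle\mathcal Sv,\mathcal Av\rangle}$ equals the commutator, which one computes — using $\nabla_x\big({-}2(\rho-s\psi)\big)=2s\omega$ and $\Delta_x\big({-}2(\rho-s\psi)\big)=0$ — to be nonnegative:
\[\langle[\mathcal S,\mathcal A]v,v\rangle=-4s\langle(\omega\cdot\nabla_x)^2v,v\rangle+4s\norm{(\rho-s\psi)v}_{L^2(Q)}^2=4s\norm{\omega\cdot\nabla_xv}_{L^2(Q)}^2+4s\norm{(\rho-s\psi)v}_{L^2(Q)}^2;\]
the favorable sign in $[\mathcal S,\mathcal A]$ is exactly what the quadratic correction $-s\psi^2/2$ in $\phi_{+,s}$ buys. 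The remaining boundary contributions are a spatial one proportional to $\int_\Sigma(\rho-s\psi)\abs{\pd_\nu v}^2(\omega\cdot\nu)\,d\sigma dt$, which splits according to the sign of $\omega\cdot\nu$ into a good piece on $\Sigma_{+,\omega}$ and a piece on $\Sigma_{-,\omega}$ to be moved to the right, and a temporal one at $t=T$ with leading part $s\rho\int_\Omega\psi\abs{v}^2(x,T)\,dx$, the lower order temporal pieces $\sim s^2\int_\Omega\abs{v}^2(x,T)$ and $\sim\int_\Omega\abs{\nabla_xv}^2(x,T)$ being absorbed once $\rho$ is large relative to $s$. Taking $\rho$ large enough that $\rho-s\psi\ge\rho/2$ on $\overline\Omega$, and adding the bound $s^{-1}\norm{\Delta_xv}_{L^2(Q)}^2\le2s^{-1}\norm{\mathcal Sv}_{L^2(Q)}^2+Cs\rho^2\norm{v}_{L^2(Q)}^2$ (both terms already controlled), this yields
\[
\begin{aligned}
&\rho\!\int_{\Sigma_{+,\omega}}\!\abs{\pd_\nu v}^2\abs{\omega\cdot\nu}\,d\sigma dt+s\rho\!\int_\Omega\!\abs{v}^2(x,T)\,dx+s^{-1}\!\int_Q\!\abs{\Delta_xv}^2\,dxdt+s\rho^2\!\int_Q\!\abs{v}^2\,dxdt\\
&\qquad\le C\Big(\norm{P_{0,+,s}v}_{L^2(Q)}^2+\rho\!\int_{\Sigma_{-,\omega}}\!\abs{\pd_\nu v}^2\abs{\omega\cdot\nu}\,d\sigma dt\Big),
\end{aligned}
\]
with $C$ depending only on $\Omega,T$; one moreover retains on the left the positive quantities $s\norm{\omega\cdot\nabla_xv}_{L^2(Q)}^2$ and $\norm{\mathcal Sv}_{L^2(Q)}^2$ for later use.

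To pass to $P_{A,+,s}$ I would use $P_{A,+,s}v=P_{0,+,s}v+A\cdot\nabla_xv+(\rho-s\psi)(A\cdot\omega)v$, which gives $\norm{P_{0,+,s}v}_{L^2(Q)}^2\le2\norm{P_{A,+,s}v}_{L^2(Q)}^2+CM^2\big(\norm{\nabla_xv}_{L^2(Q)}^2+\rho^2\norm{v}_{L^2(Q)}^2\big)$, and it remains to absorb the last two terms. I expect this to be the main obstacle: the naive estimate $\norm{\nabla_xv}_{L^2(Q)}^2=-\langle\Delta_xv,v\rangle\le s^{-1}\norm{\Delta_xv}_{L^2(Q)}^2+\tfrac{s}{4}\norm{v}_{L^2(Q)}^2$ produces a coefficient of size $1$ in front of $\norm{\Delta_xv}^2$, which is not small enough to absorb when $M=\norm{A}_{L^\infty(Q)^n}$ is large. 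The remedy is to use Young's inequality with parameter $s^{-2}$, i.e.\ $\norm{\nabla_xv}_{L^2(Q)}^2\le s^{-2}\norm{\Delta_xv}_{L^2(Q)}^2+\tfrac{s^2}{4}\norm{v}_{L^2(Q)}^2$, and then to exploit (using $\rho\ge s$) that the already controlled quantities satisfy $s^{-2}\norm{\Delta_xv}^2\le Cs^{-1}\mathcal R$ and $\tfrac{s^2}{4}\norm{v}^2=\tfrac{s}{4\rho^2}\big(s\rho^2\norm{v}^2\big)\le Cs^{-1}\mathcal R$, where $\mathcal R:=\norm{P_{0,+,s}v}_{L^2(Q)}^2+\rho\int_{\Sigma_{-,\omega}}\abs{\pd_\nu v}^2\abs{\omega\cdot\nu}\,d\sigma dt$; hence $\norm{\nabla_xv}_{L^2(Q)}^2+\rho^2\norm{v}_{L^2(Q)}^2\le Cs^{-1}\mathcal R$. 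Plugging this back, $\norm{P_{0,+,s}v}_{L^2(Q)}^2\le2\norm{P_{A,+,s}v}_{L^2(Q)}^2+CM^2s^{-1}\mathcal R$, so that for $s\ge s_1$ with $s_1$ depending on $M$ (and on $\Omega,T$) the term $\norm{P_{0,+,s}v}^2$ on the right is absorbed into $\mathcal R$; inserting this into the estimate of the preceding paragraph gives \eqref{lll1a} for $s>s_1$ and $\rho\ge\rho_1(s)$, with $s_1,\rho_1,C$ depending only on $\Omega,T,M$. In summary, the positivity of the commutator — the role of the quadratic weight correction — is the conceptual core, while the extraction of the extra negative power of $s$ just described, which is what makes the argument work for an arbitrarily large $\norm{A}_{L^\infty}$, is the main technical point.
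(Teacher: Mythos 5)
Your proposal is correct and follows essentially the same strategy as the paper: the same weight $\phi_{+,s}$, a splitting of the conjugated heat operator into a second order part and a first order part, expansion of $\norm{\cdot}^2_{L^2(Q)}$ with the cross term computed by integration by parts (sign of the commutator coming from the quadratic correction $-s\psi^2/2$), the spatial boundary term split according to the sign of $\omega\cdot\nu$, the $t=T$ term giving $s\rho\int_\Omega|v(\cdot,T)|^2$, the Laplacian recovered from the retained square of the second order factor, and finally $A\cdot\nabla_x$ treated perturbatively with a threshold $s_1\sim M^2$; the case \eqref{lll1b} by the $t\mapsto T-t$, $\omega\mapsto-\omega$ symmetry is also how the paper proceeds. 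The only genuine deviation is in the absorption of the first order perturbation: because your splitting is exactly self-adjoint/skew-adjoint, your cross term retains only the directional term $4s\norm{\omega\cdot\nabla_x v}^2_{L^2(Q)}$, whereas the paper's decomposition $P_{1,+},P_{2,+}$ (with the $\mp s$, $+2s$ shifts) produces the full gradient term $2s\int_Q|\nabla_x v|^2$ in \eqref{5}, which is then used directly, together with $s\rho^2\int_Q|v|^2$, to absorb $\norm{A\cdot\nabla_x v}^2+\rho^2\norm{(A\cdot\omega)v}^2$ for $s\geq 32M^2$. You compensate by interpolating $\norm{\nabla_x v}^2\leq s^{-2}\norm{\Delta_x v}^2+\tfrac{s^2}{4}\norm{v}^2$ and bounding both pieces by $Cs^{-1}$ times quantities already controlled in the $A=0$ estimate (using $\rho\geq s$), then absorbing at the level of $\norm{P_{0,+,s}v}^2$ versus $\norm{P_{A,+,s}v}^2$; this is equally valid, yields the same kind of $M$-dependent threshold for $s$, and in fact makes the final constant $C$ independent of $M$, at the cost of a slightly more indirect bootstrapping step. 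One cosmetic remark: the temporal piece $\int_\Omega|\nabla_x v(\cdot,T)|^2$ arising from $2\int_Q(-\Delta_x v)\pd_t v$ is nonnegative and can simply be dropped, so no largeness of $\rho$ is needed to handle it (only the $-s^2\psi^2$ piece requires $\rho\gtrsim s$).
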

\begin{proof}Without loss of generality we assume that $v$ is real valued. We start with \eqref{lll1a}.
For this purpose we will first show that, for $A=0$, there exists $c$ depending only on $\Omega$, $s_1$ depending on $\Omega$, $T$ such that for any $s>s_1$ we can find $\rho_1(s)$ for which the estimate
\bel{5}\begin{aligned}\norm{P_{A,+,s}v}^2_{L^2(Q)}\geq&\rho\int_{\Sigma_{+,\omega}} |\partial_\nu v|^2|\omega\cdot\nu| d\sigma(x)dt-8\rho\int_{\Sigma_{-,\omega}} |\partial_\nu v|^2|\omega\cdot\nu| d\sigma(x)dt+cs^{-1}\int_Q|\Delta_x v|^2dxdt\\
\ &+s\rho \int_\Omega|v|^2(x,T)dx+s\rho^2\int_Q|v|^2dxdt+2s\int_Q|\nabla_xv|^2dxdt\end{aligned}\ee
holds true when the condition $\rho>\rho_1(s)$ is fulfilled.
Using this estimate,  we will derive \eqref{lll1a}. We decompose $P_{A,+,s}$ into three terms
\[P_{A,+,s}=P_{1,+}+P_{2,+}+P_{3,+},\]
with
\[P_{1,+}=-\Delta_x+\pd_t \phi_{+,s}-|\nabla_x\phi_{+,s}|^2+\Delta_x\phi_{+,s},\quad P_{2,+}=\pd_t-2\nabla_x\phi_{+,s}\cdot\nabla_x-2\Delta_x\phi_{+,s},\quad P_{3,+}=A\cdot\nabla_x+A\cdot\nabla_x\phi_{+,s}+q.\]
Note that
\[\pd_t\phi_{+,s}=\rho^2,\quad \nabla_x\phi_{+,s}=[\rho-s(x+x_0)\cdot\omega]\omega,\quad -\Delta\phi_{+,s}=s\]
and 
\bel{caca1}P_{1,+}v=-\Delta_xv+[2\rho s(x+x_0)\cdot\omega-s^2((x+x_0)\cdot\omega)^2-s]v,\ee
$$P_{2,+}v=\partial_tv-2[\rho-s(x+x_0)\cdot\omega](\omega\cdot\nabla_xv)+2sv,$$
\bel{1}\begin{aligned}P_{1,+}vP_{2,+}v=&-\Delta_x v\pd_t v+2\Delta_x v[\rho-s(x+x_0)\cdot\omega](\omega\cdot\nabla_xv)-2s(\Delta v)v\\
\ &+[2\rho s(x+x_0)\cdot\omega-s^2((x+x_0)\cdot\omega)^2-s]v[\pd_tv-2[\rho-s(x+x_0)\cdot\omega](\omega\cdot\nabla_xv)+2sv]\end{aligned}\ee
For the first term on the right hand side of \eqref{1} we find
$$\int_Q(-\Delta_x v\pd_t v)dxdt=\int_Q\pd_t\nabla_x v\cdot\nabla_x vdxdt={1\over2}\int_Q\pd_t|\nabla_x v|^2dxdt={1\over2}\int_\Omega|\nabla_x v(x,T)|^2dx.$$
It follows that
\bel{2}\int_Q(-\Delta_x v\pd_t v)dxdt\geq0.\ee
We have also
\[\begin{aligned}&2\int_Q\Delta_x v[\rho-s(x+x_0)\cdot\omega](\omega\cdot\nabla_xv)dxdt\\
&=2\int_\Sigma \partial_\nu v[\rho-s(x+x_0)\cdot\omega](\omega\cdot\nabla_xv)d\sigma(x)dt+2s\int_Q(\omega\cdot\nabla_xv)^2dxdt-2\int_Q[\rho-s(x+x_0)\cdot\omega][\nabla_x v\cdot\nabla_x(\omega\cdot\nabla_x v)]dxdt\\
&=2\int_\Sigma \partial_\nu v[\rho-s(x+x_0)\cdot\omega](\omega\cdot\nabla_xv)d\sigma(x)dt+2s\int_Q(\omega\cdot\nabla_xv)^2dxdt-\int_Q[\rho-s(x+x_0)\cdot\omega]\omega\cdot\nabla_x|\nabla_x v|^2dxdt\\
\end{aligned}\]
and using the fact that $v_{|\Sigma}=0$, we get
$$\begin{aligned}&2\int_Q\Delta_x v[\rho-s(x+x_0)\cdot\omega](\omega\cdot\nabla_xv)dxdt\\
&=2\int_\Sigma [\rho-s(x+x_0)\cdot\omega]|\partial_\nu v|^2\omega\cdot\nu d\sigma(x)dt+2s\int_Q(\omega\cdot\nabla_xv)^2dxdt\\
&\ \ \ \ -\int_Q[\rho-s(x+x_0)\cdot\omega]|\partial_\nu v|^2\omega\cdot\nu d\sigma(x)dt-s\int_Q|\nabla_x v|^2dxdt\\
&=\int_\Sigma [\rho-s(x+x_0)\cdot\omega]|\partial_\nu v|^2\omega\cdot\nu d\sigma(x)dt-s\int_Q|\nabla_x v|^2dxdt+2s\int_Q(\omega\cdot\nabla_xv)^2dxdt.\end{aligned}$$
Choosing $\rho\geq 2s(1+\underset{x\in\Omega}{\sup}|x|)$, we obtain
$$\begin{aligned}2\int_Q\Delta_x v[\rho-s(x+x_0)\cdot\omega](\omega\cdot\nabla_xv)dxdt\\
\geq \rho\int_{\Sigma_{+,\omega}} |\partial_\nu v|^2|\omega\cdot\nu| d\sigma(x)dt-4\rho\int_{\Sigma_{-,\omega}} |\partial_\nu v|^2|\omega\cdot\nu| d\sigma(x)dt-s\int_Q|\nabla_x v|^2dxdt.\end{aligned}$$
Combining this with the fact that
$$-2s\int_Q(\Delta v)vdxdt=2s\int_Q|\nabla_x v|^2dxdt,$$
we find
\bel{3}\begin{aligned}2\int_Q\Delta_x v[\rho-s(x+x_0)\cdot\omega](\omega\cdot\nabla_xv)dxdt-2s\int_Q(\Delta v)vdxdt\\
\geq \rho\int_{\Sigma_{+,\omega}} |\partial_\nu v|^2|\omega\cdot\nu| d\sigma(x)dt-4\rho\int_{\Sigma_{-,\omega}} |\partial_\nu v|^2|\omega\cdot\nu| d\sigma(x)dt+s\int_Q|\nabla_x v|^2dxdt.\end{aligned}\ee
Now let us consider the last term on the right hand side of \eqref{1}. Note first that
$$\begin{aligned}&\int_Q[2\rho s(x+x_0)\cdot\omega-s^2((x+x_0)\cdot\omega)^2-s]v\pd_tvdxdt\\
&={1\over2}\int_Q[2\rho s(x+x_0)\cdot\omega-s^2((x+x_0)\cdot\omega)^2-s]\pd_t|v|^2dxdt\\
&\geq \rho s\int_\Omega(x+x_0)\cdot\omega|v|^2(x,T)dx-s^2\left(3+\underset{x\in\Omega}{\sup}|x|\right)^2\int_\Omega |v|^2(x,T)dx.\end{aligned}$$
Combining this with \eqref{x0} and choosing $\rho\geq s\left(3+\underset{x\in\Omega}{\sup}|x|\right)^2$, we find
\bel{4}\int_Q[2\rho s(x+x_0)\cdot\omega+s^2((x+x_0)\cdot\omega)^2-s]v\pd_tvdxdt\geq s\rho \int_\Omega|v|^2(x,T)dx.\ee
In addition, integrating by parts with respect to $x\in\Omega$, we get
$$\begin{array}{l}\int_Q[2\rho s(x+x_0)\cdot\omega+s^2((x+x_0)\cdot\omega)^2-s]v[-2[\rho-s(x+x_0)\cdot\omega](\omega\cdot\nabla_xv)]dxdt\\
\ \\
=-\int_Q[- s^3((x+x_0)\cdot\omega)^3-\rho s^2((x+x_0)\cdot\omega)^2+(2\rho^2 s+s^2)(x+x_0)\cdot\omega-s\rho]\omega\cdot\nabla_x|v|^2dxdt\\
\ \\
=\int_Q[- 3s^3((x+x_0)\cdot\omega)^2-2\rho s^2((x+x_0)\cdot\omega)+(2\rho^2 s+s^2)]|v|^2dxdt.\end{array}$$
It follows that
$$\begin{aligned}\int_Q[2\rho s(x+x_0)\cdot\omega-s^2((x+x_0)\cdot\omega)^2-s]v[-2[\rho-s(x+x_0)\cdot\omega](\omega\cdot\nabla_xv)+2sv]dxdt\\
=\int_Q[- 5s^3((x+x_0)\cdot\omega)^2+2\rho s^2((x+x_0)\cdot\omega)+(2\rho^2 s-s^2)]|v|^2dxdt.\end{aligned}$$
Then, fixing 
$$\rho\geq \sqrt{5s^2(2+\underset{x\in\Omega}{\sup}|x|)^2+s},$$
 we obtain
$$\int_Q[2\rho s(x+x_0)\cdot\omega+s^2((x+x_0)\cdot\omega)^2+s]v[-2[\rho-s(x+x_0)\cdot\omega](\omega\cdot\nabla_xv)+2sv]dxdt\geq s\rho^2\int_Q|v|^2dxdt.$$
Combining this estimate with \eqref{2}-\eqref{4}, we find
\bel{6}\begin{aligned}\norm{P_{1,+}v+P_{2,+}v}^2_{L^2(Q)}&\geq2\int_QP_{1,+}vP_{2,+}vdxdt+\norm{P_{1,+}v}_{L^2(Q)}^2\\
\ &\geq2\rho\int_{\Sigma_{+,\omega}} |\partial_\nu v|^2|\omega\cdot\nu| d\sigma(x)dt-8\rho\int_{\Sigma_{-,\omega}} |\partial_\nu v|^2|\omega\cdot\nu| d\sigma(x)dt+2s\int_Q|\nabla_x v|^2dxdt\\
\ &\ \ \ +2s\rho \int_\Omega|v|^2(x,T)dx+2s\rho^2\int_Q|v|^2dxdt+\norm{P_{1,+}v}_{L^2(Q)}^2.\end{aligned}\ee
Moreover, we have
$$\begin{aligned}\norm{P_{1,+}v}_{L^2(Q)}^2&= \norm{-\Delta_xv+[2\rho s(x+x_0)\cdot\omega-s^2((x+x_0)\cdot\omega)^2+s]v}_{L^2(Q)}^2\\
\ &\geq \frac{\norm{\Delta_xv}_{L^2(Q)}^2}{2}-\norm{[2\rho s(x+x_0)\cdot\omega-s^2((x+x_0)\cdot\omega)^2+s]v}_{L^2(Q)}^2\\
\ &\geq \frac{\norm{\Delta_xv}_{L^2(Q)}^2}{2}-36s^2\rho^2\left(2+\underset{x\in\Omega}{\sup}|x|\right)^4 \norm{v}_{L^2(Q)}^2. \end{aligned}$$
Fixing $c=\left(36\left(2+\underset{x\in\Omega}{\sup}|x|\right)^4\right)^{-1}$, we deduce that
$$\norm{P_{1,+}v}_{L^2(Q)}^2\geq cs^{-1}\norm{P_{1,+}v}_{L^2(Q)}^2\geq \frac{cs^{-1}}{2}\norm{\Delta_xv}_{L^2(Q)}^2-s\rho^2\norm{v}_{L^2(Q)}^2$$
and, combining this with \eqref{6}, we obtain \eqref{5} by fixing $$\rho_1(s)>s\left(3+\underset{x\in\Omega}{\sup}|x|\right)^2+ \sqrt{5s^2(2+\underset{x\in\Omega}{\sup}|x|)^2+s}.$$ Using \eqref{5}, we will complete the proof of the lemma.
For this purpose, we remark first that, for $\rho>\rho_1(s)$, we have
\[\begin{aligned}\norm{P_{A,+,s}v}^2_{L^2(Q)}&\geq {\norm{P_{1,+}v+P_{2,+}v}^2_{L^2(Q)}\over 2}-\norm{P_3v}_{L^2(Q)}^2\\
\ &\geq {\norm{P_{1,+}v+P_{2,+}v}^2_{L^2(Q)}\over 2}-2\norm{A}_{L^\infty(Q)}^2\int_Q|\nabla_x v|^2dxdt\\
\ &\ \ \ -8\rho^2\norm{A}_{L^\infty(Q)}^2 \int_Q| v|^2dxdt.\end{aligned}\]
Combining these estimates with \eqref{5}, we deduce that  for $s_1=32M^2$ and, for $s>s_1$,  $\rho>\rho_1(s)$, estimate \eqref{lll1a} holds true.

Now let us consider \eqref{lll1b}. We start by assuming that $A=0$. For this purpose we fix $v\in\mathcal C^2(\overline{Q})$ satisfying \eqref{t2c} and we consider $w\in\mathcal C^2(\overline{Q})$ defined by $w(x,t):= v(x,T-t)$. Clearly $w(x,0)=0$. Moreover, fixing
$$\phi_{+,s}^*(x,t):= (\rho^2t-\rho\omega\cdot x)-s{((x+ x_0)\cdot\omega)^2\over 2},$$
which corresponds to $\phi_{+,s}$ with $\omega$ replaced by $-\omega$, one can check that
$$e^{-\phi_{+,s}^*}(\partial_t-\Delta)e^{\phi_{+,s}^*}w(x,t)=P_{0,-,s}v(x,T-t),\quad (x,t)\in\overline{Q},$$
with $P_{0,-,s}=P_{A,-,s}$ for $A=0$. Therefore, applying \eqref{lll1a}, with $\omega$ replaced by $-\omega$, to $w$ we deduce \eqref{lll1b}. We can extend this result to the case $A\neq0$  by repeating the arguments used at the end of the proof of \eqref{lll1a}.
\end{proof}

\section{ GO solutions}
 Armed with the estimates \eqref{lll1a}-\eqref{lll1b} we will build suitable GO solutions for our problem. More precisely, for $j=1,2$, fixing the cofficient $(A_j,B_j,q_j)\in L^\infty(Q)^n\times L^\infty(Q)^n\times[L^\infty(0,T;L^{p}(\Omega))\cap \mathcal C([0,T];L^{\frac{2n}{3}}(\Omega))]$ with $p > 2n/3$ and  $\omega\in\mathbb S^{n-1}$, 
we look for $u_j$ solutions of 
\bel{Gsol1}\left\{\begin{array}{l}\pd_tu_1-\Delta_xu_1+A_1\cdot\nabla_xu_1+(\nabla_x\cdot B_1)u_1+q_1u_1=0,\ (x,t)\in Q,\\ u_1(x,0)=0,\quad x\in\Omega,\end{array}\right.\ee
\bel{Gsol2}\left\{\begin{array}{l}-\pd_tu_2-\Delta_xu_2-A_2\cdot\nabla_xu_2+(q_2+\nabla_x\cdot (B_2-A_2))u_2=0,\ (x,t)\in Q,\\ u_2(x,T)=0,\quad x\in\Omega,\end{array}\right.\ee
taking the form
\bel{GO12} u_1(x,t)=e^{\rho^2t+\rho x\cdot\omega}(b_{1,\rho}(x,t)+w_{1,\rho}(x,t)),\quad u_2(x,t)=e^{-\rho^2t-\rho x\cdot\omega}(b_{2,\rho}(x,t)+w_{2,\rho}(x,t)),\quad (x,t)\in Q.\ee
In these expressions, the term $b_{j,\rho}$, $j=1,2$, are the principal part of our GO solutions and they will be suitably designed for the recovery of the coefficients. The expression $w_{j,\rho}$, $j=1,2$, are the remainder term in this expression that admits a decay with respect to the parameter $\rho$ of the form
\bel{CGO11}\lim_{\rho\to+\infty}(\rho^{-1}\norm{w_{j,\rho}}_{L^2(0,T;H^1(\Omega))}+ \norm{w_{j,\rho}}_{L^2(Q)})=0.\ee
 We start by considering the principal parts of our GO solutions.

\subsection{Principal part of the GO solutions}

In this subsection we will introduce the form of the principal part $b_{j,\rho}$, $j=1,2$, of our GO solutions given by \eqref{GO12}. For this purpose, we consider $A_j\in L^\infty(Q)^n$, $j=1,2$ and we will consider $b_{j,\rho}$, $j=1,2$, to be an approximation of a solution $b_j$ of the transport equation
\bel{trans}-2\omega\cdot \nabla_xb_1+(A_1(x,t)\cdot \omega)b_1=0,\quad 2\omega\cdot \nabla_xb_2+(A_2(x,t)\cdot \omega)b_2=0,\quad (x,t)\in Q.\ee
 By replacing  the functions $b_1$, $b_2$, whose regularity depends on the one  of the coefficients $A_1$ and $A_2$, with their approximation $b_{1,\rho}$, $b_{2,\rho}$,  we can reduce the regularity  of the  coefficients $A_j$, $j=1,2$, from $L^\infty(0,T;W^{2,\infty}(\Omega))^n\cap W^{1,\infty}(0,T;L^{\infty}(\Omega))^n$ to $ L^\infty(Q)^n$. This approach, also considered in \cite{BKS,Ki3,KU1,Sa1}, remove also condition imposed to the  coefficients $A_j$, $j=1,2$,  on $\Sigma$. Indeed, if in our construction  we use the expression $b_j$ instead of  $b_{j,\rho}$, $j=1,2$,  then we can prove Theorem \ref{t5}  only for  coefficients $A_1,A_2\in L^\infty(0,T;W^{2,\infty}(\Omega))^n\cap H^1(0,T;L^{\infty}(\Omega))^n$  satisfying
$$\partial_x^\alpha A_1(x,t)=\partial_x^\alpha A_2(x,t),\quad (x,t)\in\Sigma,\ \alpha\in\mathbb N^n,\ |\alpha|\leq1,$$
where in our case we make no assumption on $A_j$ at $\Sigma$ for \eqref{t5b}, and we only assume \eqref{tttt5b} for \eqref{t5c}.

We start by considering a suitable approximation of the coefficients $A_j$, $j=1,2$. For all $r>0$ we set $B_r:=\{(x,t)\in\R^{1+n}:\ |(x,t)|<r\}$ and we fix $\chi\in\mathcal C^\infty_0(\R^{1+n})$ such that $\chi\geq0$, $\int_{\R^{1+n}}\chi(x,t)dtdx=1$, supp$(\chi)\subset B_1$. We introduce also $\chi_\rho$ given by
$\chi_\rho(x,t)=\rho^{{n+1\over 3}}\chi(\rho^{{1\over3}}x,\rho^{{1\over3}}t)$
and, for $j=1,2$, we fix
$$A_{j,\rho}(x,t):=\int_{\R^{1+n}}\chi_\rho(x-y,t-s)A_j(y,s)dsdy.$$
Here, we assume that $A_j=0$ on $\R^{1+n}\setminus Q$.
For $j=1,2$, since $A_j\in L^\infty(\R^{1+n})^n$ is supported in the compact set $\overline{Q}$, we have
\bel{a1a}\lim_{\rho\to+\infty}\norm{A_{j,\rho}-A_j}_{L^1(\R^{1+n})}=\lim_{\rho\to+\infty}\norm{A_{j,\rho}-A_j}_{L^2(\R^{1+n})}=0,\ee
and one can easily check the estimates
\bel{a1b}\norm{A_{j,\rho}}_{W^{k,\infty}(\R^{1+n})}\leq C_k\rho ^{{k\over 3}},\ee
with $C_k$ independent of $\rho$. Note that $$A_{\rho}(x,t):=\int_{\R^{1+n}}\chi_\rho(x-y,t-s)A(y,s)dsdy=A_{1,\rho}(x,t)-A_{2,\rho}(x,t),$$
with $A=A_1-A_2$.
Then, for $\xi\in\omega^\bot:=\{x\in\R^{n}:\ \omega\cdot x=0\}$, we fix 
\bel{conde1} b_{1,\rho}(x,t)=e^{-i(t\tau+x\cdot\xi)}\left(1-e^{-\rho^{1\over3}t}\right)\exp\left(-{\int_0^{+\infty} A_{1,\rho}(x+s\omega,t)\cdot\omega ds\over2}\right),\ee
\bel{conde2} b_{2,\rho}(x,t)=\left(1-e^{-\rho^{1\over3}(T-t)}\right)\exp\left({\int_0^{+\infty} A_{2,\rho}(x+s\omega,t)\cdot\omega ds\over2}\right).\ee
According to \eqref{a1a}-\eqref{a1b} and to the fact that, for $j=1,2$, supp$(A_{j,\rho})\subset [-1,T+1]\times B_{R+1}$, we have
\bel{cond31}\norm{b_{1,\rho}}_{L^\infty(0,T; W^{k,\infty}(\R^{n}))}+ \norm{b_{2,\rho}}_{L^\infty(0,T; W^{k,\infty}(\R^{n}))}\leq C_k\rho^{{k\over3}},\quad k\geq1\ee
\bel{cond33}\norm{b_{1,\rho}}_{W^{1,\infty}(0,T; W^{k,\infty}(\R^{n}))}+ \norm{b_{2,\rho}}_{W^{1,\infty}(0,T; W^{k,\infty}(\R^{n}))}\leq C_k\rho^{{k+1\over3}},\quad k\geq1\ee
and
\bel{cond4} b_{1,\rho}(x,0)=b_{2,\rho}(x,T)=0,\quad x\in\Omega.\ee
Here $C_k$, $k\in\mathbb N$, denotes a constant independent of $\rho>0$.
Moreover, conditions \eqref{a1a}-\eqref{a1b} and \eqref{cond31} imply that, for any open bounded subset $\tilde{\Omega}$ of $\R^n$ and for $\tilde{Q}=\tilde{\Omega}\times(0,T)$, we have
\bel{cond5} \lim_{\rho\to+\infty}\norm{(2\omega\cdot \nabla_x-(A_1\cdot \omega))b_{1,\rho}}_{L^2(\tilde{Q})}=\lim_{\rho\to+\infty}\norm{[(A_{1,\rho}-A_1)\cdot \omega]b_{1,\rho}}_{L^2(\tilde{Q})}=0,\ee
\bel{cond6} \lim_{\rho\to+\infty}\norm{(2\omega\cdot \nabla_x+(A_2\cdot \omega))b_{2,\rho}}_{L^2(\tilde{Q})}=\lim_{\rho\to+\infty}\norm{[(A_2-A_{2,\rho})\cdot \omega]b_{1,\rho}}_{L^2(\tilde{Q})}=0,\ee
\subsection{Carleman estimates in negative order Sobolev space}
In order to complete the construction of the GO taking the form \eqref{Gsol1}-\eqref{Gsol2}  we recall some preliminary tools and we derive two Carleman estimates in Sobolev space of negative order.  In a similar way to \cite{Ki3}, for all $m\in\R$, we introduce the space $H^m_\rho(\R^{n})$ defined by
\[H^m_\rho(\R^{n})=\{u\in\mathcal S'(\R^{n}):\ (|\xi|^2+\rho^2)^{m\over 2}\hat{u}\in L^2(\R^{n})\},\]
with the norm
\[\norm{u}_{H^m_\rho(\R^{n})}^2=\int_{\R^n}(|\xi|^2+\rho^2)^{m}|\hat{u}(\xi)|^2 d\xi .\]
Here for all tempered distributions $u\in \mathcal S'(\R^{n})$, we denote by $\hat{u}$ the Fourier transform of $u$ which, for $u\in L^1(\R^{n})$, is defined by
$$\hat{u}(\xi):=\mathcal Fu(\xi):= (2\pi)^{-{n\over2}}\int_{\R^{n}}e^{-ix\cdot \xi}u(x)dx.$$
From now on, for $m\in\R$ and $\xi\in \R^n$,  we set $$\left\langle \xi,\rho\right\rangle=(|\xi|^2+\rho^2)^{1\over2}$$
and $\left\langle D_x,\rho\right\rangle^m u$ defined by
\[\left\langle D_x,\rho\right\rangle^m u=\mathcal F^{-1}(\left\langle \xi,\rho\right\rangle^m \mathcal Fu).\]
For $m\in\R$ we define also the class of symbols
\[S^m_\rho=\{c_\rho\in\mathcal C^\infty(\R^{n}\times\R\times\R^{n}):\ |\pd_t^k\pd_x^\alpha\pd_\xi^\beta c_\rho(x,t,\xi)|\leq C_{k,\alpha,\beta}\left\langle \xi,\rho\right\rangle^{m-|\beta|},\  \alpha,\beta\in\mathbb N^n,\ k\in\mathbb N\}.\]
Following \cite[Theorem 18.1.6]{Ho3}, for any $m\in\R$ and $c_\rho\in S^m_\rho$, we define $c_\rho(x,t,D_x)$, with  $D_x=-i\nabla_x$, by
\[c_\rho(x,t,D_x)z(x)=(2\pi)^{-{n\over 2}}\int_{\R^{n}}c_\rho(x,t,\xi)\hat{z}(\xi)e^{ix\cdot \xi} d\xi,\quad z\in\mathcal S(\R^n).\]
For all $m\in\R$, we set also $OpS^m_\rho:=\{c_\rho(x,t,D_x):\ c_\rho\in S^m_\rho\}$.
We fix
$$P_{A,B,q,\pm}:=e^{\mp (\rho^2t+\rho x\cdot\omega)}(L_{\pm, A}+\nabla_x\cdot B+q)e^{\pm (\rho^2t+\rho x\cdot\omega)}$$
and we consider the following Carleman estimate. 

\begin{prop}\label{l1} Let $A,B\in L^\infty(Q)^n$ and $q\in L^\infty(0,T;L^{p}(\Omega))\cup\mathcal C([0,T];L^{\frac{2n}{3}}(\Omega))$ with $p > 2n/3$. Then, there exists $\rho_2'>\rho_2$, depending only on $\Omega$, $T$ and $M\geq \norm{A}_{L^\infty(Q)^n}+\norm{B}_{L^\infty(Q)^n}$,  such that  for all $v\in \mathcal C^1([0,T];\mathcal C^\infty_0(\Omega))$ satisfying $v_{|\Omega^T}=0$ we have 
\bel{car2}(\rho^{-\frac{1}{2}}\norm{v}_{L^2(0,T; H^{1}(\R^{n}))}+\norm{v}_{L^2(0,T; L^2(\R^{n}))})\leq C\norm{P_{A,B,q,-}v}_{L^2(0,T;H^{-1}_\rho(\R^{n}))},\quad \rho>\rho_2',\ee
with $C>0$ depending on $\Omega$, $T$ and $M\geq \norm{A}_{L^\infty(Q)^n}+\norm{B}_{L^\infty(Q)^n}+\norm{q}_{L^\infty(0,T;L^{p}(\Omega))}$, when $q\in L^\infty(0,T;L^{p}(\Omega))$ and $M\geq \norm{A}_{L^\infty(Q)^n}+\norm{B}_{L^\infty(Q)^n}+\norm{q}_{L^\infty(0,T;L^{\frac{2n}{3}}(\Omega))}$ when $q\in \mathcal C([0,T];L^{\frac{2n}{3}}(\Omega))$.
\end{prop}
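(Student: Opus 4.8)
The plan is to deduce the negative-order estimate \eqref{car2} from the $L^2$-based Carleman estimate \eqref{lll1b} of Proposition \ref{pp1}, by a conjugation/regularization argument in the spirit of \cite{KU1,Ki3,DKSU}. First I would reduce to the case $B=0$ and $q=0$: since $\nabla_x\cdot B$ and $q$ are lower-order (in $\rho$) perturbations, one estimates $\norm{(\nabla_x\cdot B)v + qv}_{L^2(0,T;H^{-1}_\rho(\R^n))}$ by a small multiple of the left-hand side of \eqref{car2}. For the $B$ term this is immediate because $\nabla_x\cdot B$ maps $L^2(0,T;H^1)$ continuously into $L^2(0,T;H^{-1}_\rho)$ with a bound uniform in $\rho$ (write $(\nabla_x\cdot B)v = \nabla_x\cdot(Bv) - B\cdot\nabla_x v$, the first piece gaining a derivative in the negative norm, the second absorbing the $\rho^{-1/2}\norm{v}_{H^1}$ term with the large-$\rho$ gain). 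For the $q$ term I would split the two cases: when $q\in\mathcal C([0,T];L^{2n/3}(\Omega))$ use the Sobolev embedding and interpolation exactly as in the proof of Proposition \ref{p2} to bound $\norm{qv}_{H^{-1}_\rho}$ by $\norm{q}\,\norm{v}_{H^{3/4}}$ and then interpolate $H^{3/4}$ between $H^1$ and $L^2$, picking up a favorable $\rho$-power from $\rho^{-1/2}\norm{v}_{H^1}+\norm{v}_{L^2}$; when $q\in L^\infty(0,T;L^p(\Omega))$, $p>2n/3$, the exponent is slightly subcritical and the same interpolation works with room to spare.

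The core is therefore the case $L_{-,A}$ alone, i.e. establishing \eqref{car2} for $P_{A,0,0,-} = e^{-(\rho^2 t+\rho x\cdot\omega)}L_{-,A}e^{\rho^2 t+\rho x\cdot\omega}$. Here I would pass through the second weight: note that $e^{-(\rho^2 t+\rho x\cdot\omega)} = e^{-\phi_{-,s}}\,e^{-s((x+x_0)\cdot\omega)^2/2}$, so conjugating with the linear-exponential weight differs from conjugating with $\phi_{-,s}$ only by the bounded smooth factor $m_s(x):=e^{-s((x+x_0)\cdot\omega)^2/2}$, whose logarithmic derivative is $O(s)$ (uniform on $\overline\Omega$, since $x_0$ is fixed by \eqref{x0}). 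Given $f\in L^2(0,T;H^{-1}_\rho(\R^n))$, write $P_{A,0,0,-}v = f$, multiply by $m_s$, apply $\langle D_x,\rho\rangle^{-1}$, and recognize that $\langle D_x,\rho\rangle^{-1}(m_s P_{A,0,0,-}v)$ equals $P_{A,-,s}$ applied to $\langle D_x,\rho\rangle^{-1}(m_s v)$ up to commutator terms that live in $OpS^0_\rho$ and are therefore bounded on $L^2$ with $\rho$-uniform norm; they get absorbed by the large-$\rho$ gain of the left-hand side of \eqref{lll1b}. Then one applies \eqref{lll1b} (with $s$ fixed large enough, $\rho\geq\rho_2(s)$) to $w:=\langle D_x,\rho\rangle^{-1}(m_s v)\in\mathcal C^2(\overline Q)$ — which vanishes on $\Sigma$ and on $\Omega^T$ by the hypotheses on $v$ — keeping only the volume terms $s\rho^2\int_Q|w|^2$ and $s^{-1}\int_Q|\Delta_x w|^2$ (the two boundary integrals on $\Sigma_{\pm,\omega}$ being harmless: the bad one on $\Sigma_{+,\omega}$ vanishes because $v$, hence $w$, is compactly supported in $\Omega$ so $\partial_\nu w=0$ on $\Sigma$). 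Combining $s\rho^2\int_Q|w|^2$ with an $O(1)$ fraction of $s^{-1}\int_Q|\Delta_x w|^2$ and translating back through $\langle D_x,\rho\rangle$ gives a lower bound of the form $c\,\rho\,\norm{v}_{L^2(0,T;L^2)}^2 + c\,\rho^{-1}\norm{v}_{L^2(0,T;H^1)}^2$ for $\norm{P_{A,-,s}w}_{L^2(Q)}^2$, which is comparable to $\rho\,\norm{f}_{L^2(0,T;H^{-1}_\rho)}^2$; dividing by $\rho$ and taking square roots yields \eqref{car2} (the extra factor $\rho^{-1/2}$ in front of $\norm{v}_{H^1}$ matching the homogeneity).

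The step I expect to be the main obstacle is the pseudodifferential bookkeeping: controlling the commutators $[\langle D_x,\rho\rangle^{-1}, A\cdot\nabla_x]$, $[\langle D_x,\rho\rangle^{-1}, m_s]$ and the difference between $P_{A,-,s}\langle D_x,\rho\rangle^{-1}(m_s\,\cdot)$ and $\langle D_x,\rho\rangle^{-1}(m_s P_{A,0,0,-}\,\cdot)$, showing that each such error term belongs to $OpS^0_\rho$ acting between the relevant spaces with operator norm bounded uniformly in $\rho$ (for the $A$-commutator one uses $A\in L^\infty$ only after an additional mollification of $A$ at scale $\rho^{1/3}$ as in \eqref{a1b}, introducing a genuine remainder $\norm{(A-A_\rho)\nabla_x v}$ that is $o(1)\cdot\rho^{-1/2}\norm{v}_{H^1}$ by \eqref{a1a}). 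Once all these are collected on the correct side and absorbed using the $s\rho^2$ and $s^{-1}\Delta_x$ gains in \eqref{lll1b}, fixing $s=s_0$ large and then $\rho_2'\geq\rho_2(s_0)$ large enough finishes the proof; one also needs the routine density remark that $\mathcal C^1([0,T];\mathcal C^\infty_0(\Omega))$-functions vanishing at $t=T$ are mapped by $\langle D_x,\rho\rangle^{-1}$ into $\mathcal C^2(\overline Q)$ with the boundary/top vanishing required to legitimately invoke \eqref{lll1b}.
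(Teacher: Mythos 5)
Your overall route (derive the $H^{-1}_\rho$ estimate from Proposition \ref{pp1} by conjugating with $\left\langle D_x,\rho\right\rangle^{-1}$ and pseudodifferential bookkeeping, with the quadratic part of the weight providing the extra parameter $s$) is the paper's route, but two of your steps fail as stated. First, the preliminary reduction to $B=q=0$ does not work. For the divergence piece you have $\norm{\nabla_x\cdot(Bv)}_{L^2(0,T;H^{-1}_\rho(\R^n))}\leq \norm{B}_{L^\infty}\norm{v}_{L^2}$ with \emph{no} gain in $\rho$, and this is a (possibly large) constant multiple of the term $\norm{v}_{L^2}$ sitting on the left of \eqref{car2}; absorption would require $C_0\norm{B}_{L^\infty}<1$, which you cannot arrange. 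Likewise, in the critical case $q\in\mathcal C([0,T];L^{\frac{2n}{3}}(\Omega))$ the sharp bound is $\norm{qv}_{H^{-1}_\rho}\lesssim \norm{q}_{L^{2n/3}}\,\rho^{-\frac12}\norm{v}_{H^1}$, i.e.\ exactly the size of the left-hand side with a large constant; your interpolation through $H^{3/4}$ in fact \emph{loses} a factor $\rho^{3/8}$ rather than gaining one (only the subcritical case $p>2n/3$ has a genuine gain $\rho^{\frac np-\frac32}$). This is precisely why the paper keeps $A$, $B$, $q$ inside the conjugated operator, proves the intermediate estimate \eqref{l2e} whose right-hand side carries the $s$-enhanced terms $s^{-1/2}\norm{w}_{H^2}+s^{1/2}\norm{w}_{H^1_\rho}$, absorbs the $A$, $B$ contributions (cf.\ \eqref{l2i}) by choosing $s$ large in terms of $\norm{A}_{L^\infty}+\norm{B}_{L^\infty}$, and handles critical $q$ by mollifying in $x$ and using time-continuity (Lemma \ref{ll1}) to make $\norm{q-q_\rho}_{L^\infty(0,T;L^{2n/3})}$ small; none of this survives if you discard $B$ and $q$ before the $s$-machinery is in play.

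Second, the boundary terms do not vanish the way you claim: $\left\langle D_x,\rho\right\rangle^{-1}$ is nonlocal, so $w=\left\langle D_x,\rho\right\rangle^{-1}(m_s v)$ is \emph{not} compactly supported in $\Omega$, hence neither $w_{|\Sigma}=0$ nor $\partial_\nu w_{|\Sigma}=0$ holds, and you cannot invoke \eqref{lll1b} on $Q$ and drop the $\Sigma_{\pm,\omega}$ integrals (the vanishing at $t=T$ is fine, since the operator acts only in $x$). The paper repairs exactly this by working on an enlarged smooth domain $\tilde\Omega\supset\supset\overline\Omega$, applying the Carleman estimate to functions in $\mathcal C^1([0,T];\mathcal C^\infty_0(\tilde\Omega))$, and only at the end taking $w=\psi_0\left\langle D_x,\rho\right\rangle^{-1}v$ with a cutoff $\psi_0$, controlling the error through $(1-\psi_0)\left\langle D_x,\rho\right\rangle^{-1}\psi_1\in OpS^{-\infty}_\rho$; this device is essential and absent from your argument, so it is not a ``routine density remark''. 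A smaller point: for $A\in L^\infty(Q)^n$ the commutator $[\left\langle D_x,\rho\right\rangle^{-1},A\cdot\nabla_x]$ is not amenable to symbol calculus, and after mollifying $A$ the remainder is controlled by the crude bound $\rho^{-1}\norm{A}_{L^\infty}\norm{v}_{H^1}$ (not by \eqref{a1a}, since $A_\rho\not\to A$ in $L^\infty$); the cleaner option, as in the paper, is to commute $\left\langle D_x,\rho\right\rangle^{\pm1}$ only through the smooth-coefficient part $P_{1,-}+P_{2,-}$ and estimate the rough first- and zero-order terms directly in $H^{-1}_\rho$.
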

\begin{proof} 
For $\phi_{\rho,s}$ given by \eqref{phi}, we  consider
$$P_{A,B,q,\pm,s}:=e^{-\phi_{\pm,s}}(L_{\pm, A}+q+\nabla_x\cdot B)e^{\phi_{\pm,s}},\quad P_{A,-,s}=e^{-\phi_{\pm,s}}L_{\pm, A}e^{\phi_{\pm,s}}$$
and in a similar way to Proposition \ref{pp1} we decompose $P_{A,B,q,-,s}$ into three terms
\[P_{A,B,q,-,s}=P_{1,-}+P_{2,-}+P_{3,-,A,B,q},\]
with
\[P_{1,-}=-\Delta_x+2\rho s((x+x_0)\cdot\omega+s^2((x+x_0)\cdot\omega)^2-s,\quad P_{2,-}=-\pd_t-2[\rho-s((x+x_0)\cdot\omega)]\omega\cdot\nabla_x+2s.\]
\[ P_{3,-,A,B,q}=A\cdot\nabla_x-(\rho+s((x+x_0)\cdot\omega))A\cdot\omega+\nabla_x\cdot B+q.\]
We pick $ \tilde{\Omega}$  a bounded open and smooth set of $\R^n$ such that $\overline{\Omega}\subset\tilde{\Omega}$ and we extend the function  $A$, $B$, $q$ by zero  to $\R^{n}\times (0,T)$. In order to prove \eqref{car2},  we fix $w\in \mathcal C^1([0,T];\mathcal C^\infty_0(\tilde{\Omega}))$ satisfying $w_{|\Omega^T}=0$ and we  consider the quantity
\[\left\langle D_x,\rho\right\rangle^{-1}(P_{1,-}+P_{2,-})\left\langle D_x,\rho\right\rangle w.\]
Here for any $z\in \mathcal C^\infty([0,T];\mathcal C^\infty_0(\tilde{\Omega}))$ we define 
\[\left\langle D_x,\rho\right\rangle^m z(x,t)=\mathcal F^{-1}_x(\left\langle \xi,\rho\right\rangle^m \mathcal F_xz(\cdot,t))(x).\]
where the partial Fourier transform $\mathcal F_x$ is defined by
$$\mathcal F_xz(t,\xi):= (2\pi)^{-{n\over2}}\int_{\R^{n}}e^{-ix\cdot \xi}z(x,t)dx.$$
In all the remaining parts of this proof $C>0$ denotes a generic constant depending on $\Omega$, $T$, $M$.
Combining the properties of composition of pseudoddifferential operators (e.g. \cite[Theorem 18.1.8]{Ho3}) with the fact that $\left\langle D_x,\rho\right\rangle^{-1}$ commutes with $\pd_t$, we find
\bel{l2c}\left\langle D_x,\rho\right\rangle^{-1}(P_{1,-}+P_{2,-})\left\langle D_x,\rho\right\rangle=P_{1,-}+P_{2,-}+R_\rho(x,D_x),\ee
where $R_\rho$ is defined by
\[R_\rho(x,\xi)=\nabla_\xi\left\langle \xi,\rho\right\rangle^{-1}\cdot D_x(p_{1,-}(x,\xi)+p_{2,-}(x,\xi))\left\langle \xi,\rho\right\rangle+\underset{\left\langle \xi,\rho\right\rangle\to+\infty}{ o}(1),\]
with
$$p_{1,-}(x,\xi)=|\xi|^2+2\rho s(x+x_0)\cdot\omega+s^2((x+x_0)\cdot\omega)^2-s,\quad p_{2,-}(x,\xi)=-2i[\rho-s((x+x_0)\cdot\omega)]\omega\cdot\xi+2s.$$
Therefore, we have
$$R_\rho(x,\xi)={i[2\rho s+2s^2(x+x_0)\cdot\omega+2is(\omega\cdot\xi)](\omega\cdot\xi)\over |\xi|^2+\rho^2}+\underset{\left\langle \xi,\rho\right\rangle\to+\infty}{ o}(1)$$
and it follows
\bel{l2d} \norm{R_\rho(x,D_x)w}_{L^2((0,T)\times \R^n)}\leq Cs^2\norm{w}_{L^2((0,T)\times \R^n)}.\ee
On the other hand,  applying \eqref{lll1b} to $w$ with $Q$ replaced by  $\tilde{Q}=(0,T)\times\tilde{\Omega}$,  we get
\bel{tre}\norm{P_{1,-}w+P_{2,-}w}_{L^2((0,T)\times\R^{n})}\geq C\left(s^{-1/2}\norm{\Delta_x w}_{L^2((0,T)\times\R^{n})}+s^{1/2}\rho\norm{ w}_{L^2((0,T)\times\R^{n})}\right).\ee
Moreover, using the fact that supp$(w)\subset\tilde{\Omega}$ and the elliptic regularity of the operator $\Delta$ we deduce that
$$\norm{ w}_{L^2(0,T;H^2(\R^{n}))}\leq C\norm{\Delta_x w}_{L^2((0,T)\times\R^{n})},$$
where in both of these estimates $C>0$ depends only on $\tilde{\Omega}$ and $T$. By interpolation,  we deduce that 
$$\begin{aligned}s^{1/2}\norm{ w}_{L^2(0,T;H^1(\R^{n}))}&\leq \left(s^{-1/2}\norm{ w}_{L^2(0,T;H^2(\R^{n}))}\right)^{\frac{1}{2}}\left(s^{3/2}\norm{ w}_{L^2(0,T;L^2(\R^{n}))}\right)^{\frac{1}{2}}\\
\ &\leq s^{-1/2}\norm{ w}_{L^2(0,T;H^2(\R^{n}))}+s^{1/2}\rho\norm{ w}_{L^2(0,T;L^2(\R^{n}))}.\end{aligned}$$
Combining these two estimates with \eqref{tre}, we get
\[\norm{P_{1,-}w+P_{2,-}w}_{L^2((0,T)\times\R^{n})}\geq C\left(s^{-1/2}\norm{ w}_{L^2(0,T;H^2(\R^{n}))}+s^{1/2}\norm{ w}_{L^2(0,T;H^1_\rho(\R^{n}))}\right).\]
Combining this estimate with \eqref{l2c}-\eqref{l2d}, for ${\rho\over s^2}$ sufficiently large, we obtain
 \bel{l2e}\begin{array}{l} \norm{(P_{1,-}+P_{2,-})\left\langle D_x,\rho\right\rangle w}_{L^2(0,T;H^{-1}_\rho(\R^{n}))}\\
=\norm{\left\langle D_x,\rho\right\rangle^{-1}(P_{1,-}+P_{2,-})\left\langle D_x,\rho\right\rangle w}_{L^2((0,T)\times \R^n)}\\ \geq  C\left(s^{-1/2}\norm{ w}_{L^2(0,T;H^2(\R^{n}))}+s^{1/2}\norm{ w}_{L^2(0,T;H^1_\rho(\R^{n}))}\right).\end{array}\ee
Moreover, we have
 \bel{l2f}\begin{aligned}&\norm{P_{3,-,A,B,q}\left\langle D_x,\rho\right\rangle w}_{L^2(0,T;H^{-1}_\rho(\R^{n}))}\\
&\leq \norm{A\cdot\nabla_x\left\langle D_x,\rho\right\rangle w}_{L^2(0,T;H^{-1}_\rho(\R^{n}))}+\norm{(\rho+s((x+x_0)\cdot\omega))A\cdot\omega\left\langle D_x,\rho\right\rangle w}_{L^2(0,T;H^{-1}_\rho(\R^{n}))}\\
&\ \ \ \ +\norm{(\nabla_x\cdot B)\left\langle D_x,\rho\right\rangle w}_{L^2(0,T;H^{-1}_\rho(\R^{n}))}+\norm{q\left\langle D_x,\rho\right\rangle w}_{L^2(0,T;H^{-1}_\rho(\R^{n}))}.\end{aligned}\ee
For the first term on the right hand side of \eqref{l2f}, we find
 \bel{l2g}\begin{aligned}\norm{A\cdot\nabla_x\left\langle D_x,\rho\right\rangle w}_{L^2(0,T;H^{-1}_\rho(\R^{n}))}&\leq\rho^{-1}\norm{A\cdot\nabla_x\left\langle D_x,\rho\right\rangle w}_{L^2(0,T;L^2(\R^{n}))}\\
\ &\leq \norm{A}_{L^\infty(Q)}\rho^{-1}\norm{\nabla_x\left\langle D_x,\rho\right\rangle w}_{L^2(0,T;L^2(\R^{n}))}\\
\ &\leq C\norm{A}_{L^\infty(Q)}\left(\rho^{-1}\norm{w}_{L^2(0,T;H^2(\R^n))}+\norm{w}_{L^2(0,T;H^1(\R^n))}\right).\end{aligned}\ee
For the second term on the right hand side of \eqref{l2f}, we get
 \bel{l2h}\begin{aligned}\norm{(\rho+s((x+x_0)\cdot\omega))A\cdot\omega\left\langle D_x,\rho\right\rangle w}_{L^2(0,T;H^{-1}_\rho(\R^{n}))}&\leq \rho^{-1}\norm{(\rho+s((x+x_0)\cdot\omega))A\cdot\omega\left\langle D_x,\rho\right\rangle w}_{L^2(0,T;L^2(\R^{n}))}\\
\ &\leq \left(1+|x_0|+\sup_{x\in\overline{\Omega}}|x|\right)\norm{A}_{L^\infty(Q)}\norm{\left\langle D_x,\rho\right\rangle w}_{L^2(0,T;L^2(\R^{n}))}\\
\ &\leq C\norm{A}_{L^\infty(Q)}\norm{w}_{L^2(0,T;H^1_\rho(\R^n))}.\end{aligned}\ee
 For the third term on the right hand side of \eqref{l2f}, we have
\bel{l2i}\begin{aligned}\norm{(\nabla_x\cdot B)\left\langle D_x,\rho\right\rangle w}_{L^2(0,T;H^{-1}_\rho(\R^{n}))}&\leq \norm{B\left\langle D_x,\rho\right\rangle w}_{L^2(0,T;L^2(\R^{n}))}+ \rho^{-1}\norm{B\cdot \nabla_x\cdot \left\langle D_x,\rho\right\rangle w}_{L^2(0,T;L^2(\R^{n}))}\\
\ &\leq C\norm{B}_{L^\infty(Q)}\left(\rho^{-1}\norm{w}_{L^2(0,T;H^2(\R^n))}+\norm{w}_{L^2(0,T;H^1_\rho(\R^n))}\right).\end{aligned}\ee
Finally, for the last term on the right hand side of \eqref{l2f}, we will prove that there exists $\rho_1''(s)>\rho_1(s)$,  with $\rho_1(s)$ given by Proposition \ref{pp1},  such that the estimate
\bel{l2k}\norm{q\left\langle D_x,\rho\right\rangle w}_{L^2(0,T;H^{-1}_\rho(\R^{n}))}\leq C[s^{-1}\norm{w}_{H^2(\R^n)}+\rho\norm{w}_{L^2(\R^n)}]\ee
holds true for $\rho>\rho_1''(s)$. For this purpose, let us first  assume that $n\geq3$ and $q\in\mathcal C([0,T];L^{\frac{2n}{3}}(\Omega))$. We consider
 \bel{qr}q_\rho(x,t):=\int_{\R^{1+n}}\rho^{\frac{n}{4}}h(\rho^{\frac{1}{4}} (x-y))q(y,t)dy,\ee
with $q$ extended by zero to $\R^n\times(0,T)$ and with $h\in\mathcal C^\infty_0(\R^n;[0,+\infty))$ satisfying  supp$(h)\subset\{x\in\R^n:\ |x|<1\}$ and
$$\int_{\R^n}h(x)dx=1.$$
We have the following result.
\begin{lem}\label{ll1} Let $p_2\in[1,+\infty)$, $q\in\mathcal C([0,T];L^{p_2}(\Omega))$ and $q_\rho$ given by \eqref{qr}. Then, we have
\bel{ll1a}\lim_{\rho\to+\infty}\norm{q_\rho-q}_{L^\infty(0,T;L^{p_2}(\R^n))}.\ee
\end{lem}
We will prove this result when finished the present proof. For all $\psi\in L^2(0,T;\mathcal C^\infty_0(\R^n))$ we have
$$\abs{\left\langle q\left\langle D_x,\rho\right\rangle w, \psi\right\rangle_{L^2(0,T;H^{-1}_\rho(\R^{n})), L^2(0,T;H^{1}_\rho(\R^{n}))}}\leq\int_0^T\int_{\R^n}(|q-q_\rho|+|q_\rho|)|\left\langle D_x,\rho\right\rangle w||\psi|dxdt.$$
Applying the H\"older inequality, for $n\geq3$, we get
\bel{l2j}\begin{aligned}&\abs{\left\langle q\left\langle D_x,\rho\right\rangle w, \psi\right\rangle_{L^2(0,T;H^{-1}_\rho(\R^{n})), L^2(0,T;H^{1}_\rho(\R^{n}))}}\\
&\leq \norm{q-q_\rho}_{L^\infty(0,T;L^{\frac{2n}{3}}(\R^n))}\norm{\left\langle D_x,\rho\right\rangle w}_{L^2(0,T;L^{\frac{2n}{n-2}}(\R^n))}\norm{\psi}_{L^2(0,T;L^{\frac{2n}{n-1}}(\R^n))}\\
&\ \ +\norm{q_\rho}_{L^\infty(Q)}\norm{\left\langle D_x,\rho\right\rangle w}_{L^2(\R^n\times(0,T))}\norm{\psi}_{L^2(\R^n\times(0,T))}\end{aligned}\ee
For the first term on the right hand side of \eqref{l2j}, applying the Sobolev embedding theorem, we find
$$\begin{aligned} &\norm{q-q_\rho}_{L^\infty(0,T;L^{\frac{2n}{3}}(\R^n))}\norm{\left\langle D_x,\rho\right\rangle w}_{L^2(0,T;L^{\frac{2n}{n-2}}(\R^n))}\norm{\psi}_{L^2(0,T;L^{\frac{2n}{n-1}}(\R^n))}\\
&\leq \norm{q-q_\rho}_{L^\infty(0,T;L^{\frac{2n}{3}}(\R^n))}\norm{\left\langle D_x,\rho\right\rangle w}_{L^2(0,T;H^1(\R^n)}\norm{\psi}_{L^2(0,T;H^{\frac{1}{2}}(\R^n))}.\end{aligned}$$
Moreover, by interpolation, we obtain
$$\begin{aligned}\norm{\psi}_{L^2(0,T;H^{\frac{1}{2}}(\R^n))}&\leq \norm{\psi}_{L^2(0,T;H_\rho^{\frac{1}{2}}(\R^n))}\\
\ &\leq \left(\norm{\psi}_{L^2(0,T;H_\rho^{1}(\R^n))}\right)^{\frac{1}{2}}\left(\norm{\psi}_{L^2(0,T;L^2(\R^n))}\right)^{\frac{1}{2}}\\
\ &\leq C\rho^{-\frac{1}{2}}\norm{\psi}_{L^2(0,T;H_\rho^{1}(\R^n))}\end{aligned}$$
and we deduce that 
$$\begin{aligned} &\norm{q-q_\rho}_{L^\infty(0,T;L^{\frac{2n}{3}}(\R^n))}\norm{\left\langle D_x,\rho\right\rangle w}_{L^2(0,T;L^{\frac{2n}{n-2}}(\R^n))}\norm{\psi}_{L^2(0,T;L^{\frac{2n}{n-1}}(\R^n))}\\
&\leq C\norm{q-q_\rho}_{L^\infty(0,T;L^{\frac{2n}{3}}(\R^n))}\left(\rho^{-\frac{1}{2}}\norm{\left\langle D_x,\rho\right\rangle w}_{L^2(0,T;H^1(\R^n))}\right)\norm{\psi}_{L^2(0,T;H_\rho^{1}(\R^n))}\\
&\leq C\norm{q-q_\rho}_{L^\infty(0,T;L^{\frac{2n}{3}}(\R^n))}\left[\rho^{-\frac{1}{2}}\norm{w}_{L^2(0,T;H^2(\R^n))}+\rho^{\frac{1}{2}}\norm{w}_{L^2(0,T;H^1(\R^n))}\right]\norm{\psi}_{L^2(0,T;H_\rho^{1}(\R^n))}\\
&\leq C\rho^{-\frac{1}{2}}\norm{q-q_\rho}_{L^\infty(0,T;L^{\frac{2n}{3}}(\R^n))}\norm{w}_{L^2(0,T;H^2(\R^n))}\\
&\ \ \ +C\norm{q-q_\rho}_{L^\infty(0,T;L^{\frac{2n}{3}}(\R^n))}(s^{-\frac{1}{2}}\norm{w}_{L^2(0,T;H^2(\R^n))})^{\frac{1}{2}}(s^{\frac{1}{2}}\rho\norm{w}_{L^2(\R^n\times(0,T))})^{\frac{1}{2}}\norm{\psi}_{L^2(0,T;H_\rho^{1}(\R^n))}\\
&\leq C\norm{q-q_\rho}_{L^\infty(0,T;L^{\frac{2n}{3}}(\R^n))}\left[s^{-\frac{1}{2}}\norm{w}_{L^2(0,T;H^2(\R^n))}+s^{\frac{1}{2}}\rho\norm{w}_{L^2(\R^n\times(0,T))}\right]\norm{\psi}_{L^2(0,T;H_\rho^{1}(\R^n))}.\end{aligned}$$
In the same way, for the second term on the right hand side of \eqref{l2j}, applying the Sobolev embedding theorem, we obtain
$$\begin{aligned}&\norm{q_\rho}_{L^\infty(Q)}\norm{\left\langle D_x,\rho\right\rangle w}_{L^2(Q)}\norm{\psi}_{L^2(Q)}\\
&\leq C\norm{q_\rho}_{L^\infty(0,T;W^{2,\frac{2n}{3}}(\R^n))}\norm{\left\langle D_x,\rho\right\rangle w}_{L^2(\R^n\times(0,T))}\norm{\psi}_{L^2(\R^n\times(0,T))}\\
&\leq C\rho^{\frac{1}{2}}\norm{ w}_{L^2(0,T;H^1_\rho(\R^n))}(\rho^{-1}\norm{\psi}_{L^2(0,T;H^1_\rho(\R^n))})\\
&\leq C\rho^{-\frac{1}{2}}\left[s^{-\frac{1}{2}}\norm{w}_{L^2(0,T;H^2(\R^n))}+s^{\frac{1}{2}}\rho\norm{w}_{L^2(0,T;H^1(\R^n))}\right]\norm{\psi}_{L^2(0,T;H^1_\rho(\R^n))})\end{aligned}$$
Combining these two estimates with \eqref{l2j}, we obtain 
$$\begin{aligned}&\abs{\left\langle q\left\langle D_x,\rho\right\rangle w, \psi\right\rangle_{L^2(0,T;H^{-1}_\rho(\R^{n})), L^2(0,T;H^{1}_\rho(\R^{n}))}}\\
&\leq C[\norm{q-q_\rho}_{L^\infty(0,T;L^{\frac{2n}{3}}(\R^n))}+\rho^{-\frac{1}{2}}]\left[s^{-\frac{1}{2}}\norm{w}_{L^2(0,T;H^2(\R^n))}+s^{\frac{1}{2}}\rho\norm{w}_{L^2(\R^n\times(0,T))}\right]\norm{\psi}_{L^2(0,T;H^1_\rho(\R^n))}\end{aligned}$$
and we deduce that 
$$\begin{aligned}&\norm{q\left\langle D_x,\rho\right\rangle w}_{L^2(0,T;H^{-1}_\rho(\R^{n}))}\\
&\leq C[\norm{q-q_\rho}_{L^\infty(0,T;L^{\frac{2n}{3}}(\R^n))}+\rho^{-\frac{1}{2}}]\left[s^{-\frac{1}{2}}\norm{w}_{L^2(0,T;H^2(\R^n))}+s^{\frac{1}{2}}\rho\norm{w}_{L^2(\R^n\times(0,T))}\right].\end{aligned}$$
On the other hand, using the fact that
$$\lim_{\rho\to+\infty}[\norm{q-q_\rho}_{L^\infty(0,T;L^{\frac{2n}{3}}(\R^n))}+\rho^{-\frac{1}{2}}]=0,$$
we can find $\rho_1''(s)>\rho_1(s)$ such that for $\rho>\rho_1''(s)$ we have
$$[\norm{q-q_\rho}_{L^\infty(0,T;L^{\frac{2n}{3}}(\R^n))}+\rho^{-\frac{1}{2}}]\leq s^{-\frac{1}{2}}.$$
Thus, we obtain \eqref{l2k}. In the same way we can deduce  \eqref{l2k} for $n=2$ and $q\in\mathcal C([0,T];L^{\frac{2n}{3}}(\Omega))$. Now let us show \eqref{l2k} for $n\geq3$ and $q\in L^\infty(0,T;L^p(\Omega))$, for $p< n$. In that case, applying the H\"older inequality,  we get
$$\begin{aligned}&\abs{\left\langle q\left\langle D_x,\rho\right\rangle w, \psi\right\rangle_{L^2(0,T;H^{-1}_\rho(\R^{n})), L^2(0,T;H^{1}_\rho(\R^{n}))}}\\
&\leq \norm{q}_{L^\infty(0,T;L^p(\Omega))}\norm{\left\langle D_x,\rho\right\rangle w}_{L^2\left(0,T;L^{\frac{2n}{n-2}}(\R^n)\right)}\norm{\psi}_{L^2\left(0,T;L^{\frac{2n}{n-2(\frac{n}{p}-1)}}(\R^n)\right)}.\end{aligned}$$
Using the Sobolev embedding theorem, we have
$$\begin{aligned}&\abs{\left\langle q\left\langle D_x,\rho\right\rangle w, \psi\right\rangle_{L^2(0,T;H^{-1}_\rho(\R^{n})), L^2(0,T;H^{1}_\rho(\R^{n}))}}\\
&\leq C\norm{q}_{L^\infty(0,T;L^p(\Omega))}\norm{\left\langle D_x,\rho\right\rangle w}_{L^2(0,T;H^{1}(\R^n))}\norm{\psi}_{L^2(0,T;H^{\frac{n}{p}-1}(\R^n))}.\end{aligned}$$
On the other hand, by interpolation we find
$$\begin{aligned}\norm{\psi}_{L^2(0,T;H^{\frac{n}{p}-1}(\R^n))}&\leq \norm{\psi}_{L^2(0,T;H_\rho^{\frac{n}{p}-1}(\R^n))}\\
\ &\leq \left(\norm{\psi}_{L^2(0,T;H_\rho^{1}(\R^n))}\right)^{\frac{n}{p}-1}\left(\norm{\psi}_{L^2(0,T;L^2(\R^n))}\right)^{2-\frac{n}{p}}\\
\ &\leq C\rho^{\frac{n}{p}-2}\norm{\psi}_{L^2(0,T;H_\rho^{1}(\R^n))}\end{aligned}$$
and we deduce that
$$\begin{aligned}&\norm{q\left\langle D_x,\rho\right\rangle w}_{L^2(0,T;H^{-1}_\rho(\R^{n}))}\\
&\leq C\norm{q}_{L^\infty(0,T;L^p(\Omega)}\rho^{\frac{n}{p}-2}\norm{\left\langle D_x,\rho\right\rangle w}_{L^2(0,T;H^1(\R^n))}\\
&\leq C\norm{q}_{L^\infty(0,T;L^p(\Omega)}\rho^{\frac{n}{p}-\frac{3}{2}}[s^{-\frac{1}{2}}\norm{w}_{H^2(\R^n)}+s^{\frac{1}{2}}\rho\norm{w}_{L^2(\R^n)}].\end{aligned}$$
Using the fact that $\frac{3}{2}>\frac{n}{p}$, we deduce \eqref{l2k}, for $n\geq3$ and $q\in L^\infty(0,T;L^p(\Omega))$, from this estimate. We prove in the same way, \eqref{l2k}, for $n=2$ and $q\in L^\infty(0,T;L^p(\Omega))$.
Combining \eqref{l2f}-\eqref{l2k} with \eqref{l2e}, for $s=C(\norm{A}_{L^\infty(Q)}+\norm{B}_{L^\infty(Q)})+1$, for some constant $C>0$ depending only on $\Omega$, $T$ but suitably chosen, 
we find
\bel{l2l}\norm{P_{A,B,q,-,s}\left\langle D_x,\rho\right\rangle w}_{L^2(0,T;H^{-1}_\rho(\R^{n}))}\geq C\left(\norm{w}_{L^2(0,T;H^2(\R^{n}))}+\norm{w}_{L^2(0,T;H^1_\rho(\R^{n}))}\right).\ee
We fix $\psi_0\in\mathcal C^\infty_0(\tilde{\Omega})$ satisfying $\psi_0=1$ on $\overline{\Omega_1}$, with  $\Omega_1$ an open neighborhood of $\overline{\Omega}$ such that $\overline{\Omega_1}\subset\tilde{\Omega}$. Then, we fix $w=\psi_0(x) \left\langle D_x,\rho\right\rangle^{-1} v(x,t)$ and for $\psi_1\in\mathcal C^\infty_0(\Omega_1)$ satisfying $\psi_1=1$ on $\Omega$, we get $(1-\psi_0 )\left\langle D_x,\rho\right\rangle^{-1} v=(1-\psi_0 )\left\langle D_x,\rho\right\rangle^{-1}\psi_1 v$. According to \cite[Theorem 18.1.8]{Ho3}, we have $(1-\psi_0) \left\langle D_x,\rho\right\rangle^{-1}\psi_1\in OpS^{-\infty}_\rho$ and it follows
\[ \begin{aligned}\norm{v}_{L^2((0,T)\times\R^{n})}&=\norm{\left\langle D_x,\rho\right\rangle^{-1} v}_{L^2(0,T;H^1_\rho(\R^{n}))}\\
\ &\leq \norm{w}_{L^2(0,T;H^1_\rho(\R^{n}))}+\norm{(1-\psi_0)\left\langle D_x,\rho\right\rangle^{-1}\psi_1 v}_{L^2(0,T;H^1_\rho(\R^{n}))}\\
\ &\leq \norm{w}_{L^2(0,T;H^1_\rho(\R^{n}))}+{C\norm{v}_{L^2((0,T)\times\R^{n})}\over\rho^2} .\end{aligned}\]
In addition, by interpolation, we get
$$\begin{aligned}&\rho^{-1}\norm{v}_{L^2(0,T;H^1(\R^{n}))}^2\\
&\leq \norm{\left\langle D_x,\rho\right\rangle^{-1} v}_{L^2(0,T;H^1_\rho(\R^{n}))}^2+\norm{\left\langle D_x,\rho\right\rangle^{-1} v}_{L^2(0,T;H^2(\R^{n}))}^2+\rho\norm{\left\langle D_x,\rho\right\rangle^{-1} v}_{L^2(0,T;H^1(\R^{n}))}^2\\
&\leq \norm{\left\langle D_x,\rho\right\rangle^{-1} v}_{L^2(0,T;H^1_\rho(\R^{n}))}^2+\norm{\left\langle D_x,\rho\right\rangle^{-1} v}_{L^2(0,T;H^2(\R^{n}))}^2+\rho\norm{\left\langle D_x,\rho\right\rangle^{-1} v}_{L^2(0,T;H^2(\R^{n}))}\norm{\left\langle D_x,\rho\right\rangle^{-1} v}_{L^2(0,T;L^2(\R^{n}))}\\
&\leq 2\norm{\left\langle D_x,\rho\right\rangle^{-1} v}_{L^2(0,T;H^1_\rho(\R^{n}))}^2+2\norm{\left\langle D_x,\rho\right\rangle^{-1} v}_{L^2(0,T;H^2(\R^{n}))}^2\end{aligned}$$
and it follows
\[ \begin{aligned}\rho^{-\frac{1}{2}}\norm{v}_{L^2(0,T;H^1(\R^{n}))}&\leq4\norm{\left\langle D_x,\rho\right\rangle^{-1} v}_{L^2(0,T;H^1_\rho(\R^{n}))}+4\norm{\left\langle D_x,\rho\right\rangle^{-1} v}_{L^2(0,T;H^2(\R^{n}))}\\
\ &\leq 4\norm{w}_{L^2(0,T;H^1_\rho(\R^{n}))}+\norm{(1-\psi_0)\left\langle D_x,\rho\right\rangle^{-1}\psi_1 v}_{L^2(0,T;H^1_\rho(\R^{n}))}\\
\ &\ \ \ +4\norm{w}_{L^2(0,T;H^2(\R^{n}))}+\norm{(1-\psi_0)\left\langle D_x,\rho\right\rangle^{-1}\psi_1 v}_{L^2(0,T;H^2(\R^{n}))}\\
\ &\leq 4\norm{w}_{L^2(0,T;H^1_\rho(\R^{n}))}+4\norm{w}_{L^2(0,T;H^2(\R^{n}))}+{C\norm{v}_{L^2((0,T)\times\R^{n})}\over\rho^2} .\end{aligned}\]

Thus, applying \eqref{l2l} for a fixed value of $s$, we deduce that there exists $\rho_2'>0$ such that \eqref{car2} is fulfilled.\end{proof}

Now that the proof of Lemma \ref{l1} is completed, let us consider the proof of Lemma \ref{ll1}.

\textbf{Proof of Lemma \ref{ll1}.} We fix $\epsilon_1>0$ and we will prove that 
\bel{ll1b}\limsup_{\rho\to+\infty}\norm{q_\rho-q}_{L^\infty(0,T;L^{p_2}(\R^n)}\leq 2\epsilon_1.\ee
For this purpose, using the fact that $t\mapsto q_\rho(\cdot,t)\in \mathcal C([0,T];L^{p_2}(\R^n))$, there exists $\delta>0$ such that for all $t,t'\in[0,T]$ satisfying $|t-t'|<\delta$ we have 
\bel{ll1c}\norm{q(\cdot ,t)]-q(\cdot ,t')}_{L^{p_2}(\R^n)}\leq \epsilon_1.\ee
Using the fact that $[0,T]$ is compact, we can find $t_1,\ldots,t_N$ such that
$$[0,T]\subset \bigcup_{j=1}^N(t_j-\delta,t_j+\delta)$$
and, using the fact that
$$\lim_{\rho\to+\infty}\norm{q_\rho(\cdot,t)-q(\cdot ,t)}_{L^{p_2}(\R^n)}=0,\quad t\in[0,T],$$
we get
\bel{ll1f}\lim_{\rho\to+\infty}\underset{j=1,\ldots,N}{\max}\norm{q_\rho(\cdot,t_j)-q(\cdot ,t_j)}_{L^{p_2}(\R^n)}=0,\quad j=1,\ldots,N.\ee
Thus, for all $t\in[0,T]$ there exists $k\in\{1,\ldots N\}$ such that $|t-t_k|<\delta$ and, applying  \eqref{ll1c} and the Young inequality, we get
$$\begin{aligned}&\norm{q_\rho(\cdot,t)-q(\cdot ,t)}_{L^{p_2}(\R^n)}\\
&\leq \norm{q(\cdot ,t)-q(\cdot ,t_k)}_{L^{p_2}(\R^n)}+\norm{q_\rho(\cdot ,t)-q_\rho(\cdot ,t_k)}_{L^{p_2}(\R^n)}+\norm{q_\rho(\cdot,t_k)-q(\cdot ,t_k)}_{L^{p_2}(\R^n)}\\
&\leq 2\norm{q(\cdot ,t)-q(\cdot ,t_k)}_{L^{p_2}(\R^n)}+\underset{j=1,\ldots,N}{\max}\norm{q_\rho(\cdot,t_j)-q(\cdot ,t_j)}_{L^{p_2}(\R^n)}\\
&\leq2\epsilon_1+\underset{j=1,\ldots,N}{\max}\norm{q_\rho(\cdot,t_j)-q(\cdot ,t_j)}_{L^{p_2}(\R^n)}.\end{aligned}$$
Therefore, we have
$$\norm{q_\rho-q}_{L^\infty(0,T;L^{p_2}(\R^n))}\leq 2\epsilon_1+\underset{j=1,\ldots,N}{\max}\norm{q_\rho(\cdot,t_j)-q(\cdot ,t_j)}_{L^{p_2}(\R^n)}$$
and using \eqref{ll1f}, we obtain \eqref{ll1b} from which we deduce \eqref{ll1a}.\qed

In a similar way to Proposition \ref{l1}, combining estimate \eqref{5} with the arguments of Lemma \ref{l1}, we deduce the following estimate.

\begin{prop}\label{l8} There exists $\rho_4'>\rho_3$ such that for $\rho>\rho_4'$ and for any $v\in \mathcal C^1([0,T];\mathcal C^\infty_0(\Omega))$ satisfying $v_{|\Omega^0}=0$,  we have
\bel{l8a}(\rho^{-\frac{1}{2}}\norm{v}_{L^2(0,T; H^{1}(\R^{n}))}+\norm{v}_{L^2(0,T; L^2(\R^{n}))})\leq C\norm{P_{A,B,q,+}v}_{L^2(0,T;H^{-1}_\rho(\R^{n}))},\quad \rho>\rho_4'\ee
with $C>0$  depending on $\Omega$, $T$ and $M\geq \norm{A}_{L^\infty(Q)^n}+\norm{B}_{L^\infty(Q)^n}+\norm{q}_{L^\infty(0,T;L^{p}(\Omega))}$, when $q\in L^\infty(0,T;L^{p}(\Omega))$  with $p>2n/3$ and $M\geq \norm{A}_{L^\infty(Q)^n}+\norm{B}_{L^\infty(Q)^n}+\norm{q}_{L^\infty(0,T;L^{\frac{2n}{3}}(\Omega))}$ when $q\in \mathcal C([0,T];L^{\frac{2n}{3}}(\Omega))$.\end{prop}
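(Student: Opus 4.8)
The plan is to mimic almost verbatim the proof of Lemma~\ref{l1}, replacing the backward Carleman estimate \eqref{lll1b} for $P_{A,-,s}$ by the forward intermediate estimate \eqref{5} (the one obtained in the course of proving Proposition~\ref{pp1}) and interchanging the roles of $\Omega^0$ and $\Omega^T$. With $\phi_{+,s}$ as in \eqref{phi} and $P_{A,B,q,+,s}:=e^{-\phi_{+,s}}(L_{+,A}+q+\nabla_x\cdot B)e^{\phi_{+,s}}$, I decompose $P_{A,B,q,+,s}=P_{1,+}+P_{2,+}+P_{3,+,A,B,q}$ with $P_{1,+},P_{2,+}$ as in the proof of Proposition~\ref{pp1} and
\[
P_{3,+,A,B,q}=A\cdot\nabla_x+(\rho-s((x+x_0)\cdot\omega))A\cdot\omega+\nabla_x\cdot B+q .
\]
After extending $A,B,q$ by zero to $\R^n\times(0,T)$ and fixing a smooth bounded open set $\tilde\Omega$ with $\overline\Omega\subset\tilde\Omega$, I take $w\in\mathcal C^1([0,T];\mathcal C^\infty_0(\tilde\Omega))$ with $w_{|\Omega^0}=0$ and consider $\langle D_x,\rho\rangle^{-1}(P_{1,+}+P_{2,+})\langle D_x,\rho\rangle$. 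By the composition calculus of \cite[Theorem 18.1.8]{Ho3}, exactly as in \eqref{l2c}--\eqref{l2d}, this operator equals $P_{1,+}+P_{2,+}+R_\rho(x,D_x)$ with $\|R_\rho(x,D_x)w\|_{L^2((0,T)\times\R^n)}\leq Cs^2\|w\|_{L^2((0,T)\times\R^n)}$.

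Applying \eqref{5} with $A=0$ (so that $P_{A,+,s}w=P_{1,+}w+P_{2,+}w$) to $w$ on $(0,T)\times\tilde\Omega$, the two boundary integrals over $\Sigma_{\pm,\omega}$ vanish since $w$ has compact support in $\tilde\Omega$, and discarding the nonnegative term at $t=T$ one gets $\|P_{1,+}w+P_{2,+}w\|_{L^2}\geq C(s^{-1/2}\|\Delta_x w\|_{L^2}+s^{1/2}\rho\|w\|_{L^2}+s^{1/2}\|\nabla_x w\|_{L^2})$. Combining this with the elliptic bound $\|w\|_{L^2(0,T;H^2(\R^n))}\leq C\|\Delta_x w\|_{L^2((0,T)\times\R^n)}$ (valid because $\textrm{supp}\,w\subset\tilde\Omega$), the interpolation inequality $s^{1/2}\|w\|_{L^2(0,T;H^1(\R^n))}\leq s^{-1/2}\|w\|_{L^2(0,T;H^2(\R^n))}+s^{1/2}\rho\|w\|_{L^2((0,T)\times\R^n)}$, and the pseudodifferential identity above, I reach, exactly as in \eqref{tre}--\eqref{l2e},
\[
\|(P_{1,+}+P_{2,+})\langle D_x,\rho\rangle w\|_{L^2(0,T;H^{-1}_\rho(\R^n))}\geq C\big(s^{-1/2}\|w\|_{L^2(0,T;H^2(\R^n))}+s^{1/2}\|w\|_{L^2(0,T;H^1_\rho(\R^n))}\big)
\]
for $\rho/s^2$ large enough.

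Next I bound $\|P_{3,+,A,B,q}\langle D_x,\rho\rangle w\|_{L^2(0,T;H^{-1}_\rho(\R^n))}$ term by term exactly as in \eqref{l2f}--\eqref{l2k}: the contributions of $A\cdot\nabla_x$, $(\rho-s((x+x_0)\cdot\omega))A\cdot\omega$ and $\nabla_x\cdot B$ are controlled by $CM(\rho^{-1}\|w\|_{L^2(0,T;H^2(\R^n))}+\|w\|_{L^2(0,T;H^1_\rho(\R^n))})$, while the singular potential $q$ is treated by the mollification argument resting on Lemma~\ref{ll1}, which gives, for $\rho$ large depending on $s$, a bound by $C\epsilon(\rho)(s^{-1/2}\|w\|_{L^2(0,T;H^2(\R^n))}+s^{1/2}\rho\|w\|_{L^2((0,T)\times\R^n)})$ with $\epsilon(\rho)\to0$ as $\rho\to+\infty$. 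Fixing $s$ large depending only on $\Omega$, $T$ and $M=\|A\|_{L^\infty(Q)^n}+\|B\|_{L^\infty(Q)^n}$ so as to absorb the first-order terms, and then $\rho$ large, I obtain the analogue of \eqref{l2l}, namely $\|P_{A,B,q,+,s}\langle D_x,\rho\rangle w\|_{L^2(0,T;H^{-1}_\rho(\R^n))}\geq C(\|w\|_{L^2(0,T;H^2(\R^n))}+\|w\|_{L^2(0,T;H^1_\rho(\R^n))})$. Finally, for $v\in\mathcal C^1([0,T];\mathcal C^\infty_0(\Omega))$ with $v_{|\Omega^0}=0$, I pick cutoffs $\psi_0,\psi_1\in\mathcal C^\infty_0(\tilde\Omega)$ with $\psi_0\equiv1$ near $\overline\Omega$, $\psi_1\equiv1$ on $\Omega$, $\textrm{supp}\,\psi_1\subset\{\psi_0=1\}$, set $w=\psi_0\langle D_x,\rho\rangle^{-1}v$ (which lies in $\mathcal C^1([0,T];\mathcal C^\infty_0(\tilde\Omega))$ and vanishes at $t=0$ because $\langle D_x,\rho\rangle^{-1}$ acts only in $x$), use $(1-\psi_0)\langle D_x,\rho\rangle^{-1}\psi_1\in OpS^{-\infty}_\rho$ to absorb the tail, and conclude \eqref{l8a} exactly as at the end of the proof of Lemma~\ref{l1}, for $\rho$ larger than some $\rho_4'>\rho_3$.

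The argument is essentially bookkeeping once Proposition~\ref{pp1}, the composition calculus and Lemma~\ref{ll1} are available; the one step I would double-check is the systematic swap forward$\leftrightarrow$backward and $\Omega^0\leftrightarrow\Omega^T$ throughout. The observation that makes this painless is that \eqref{5} supplies interior control of $\|\Delta_x w\|_{L^2}$, $\|\nabla_x w\|_{L^2}$ and $\|w\|_{L^2}$ regardless of whether the endpoint $L^2$-term sits at $t=0$ or $t=T$, so the passage to the negative-order estimate \eqref{l8a} is insensitive to this change; the $t=T$ endpoint term in \eqref{5} is simply dropped.
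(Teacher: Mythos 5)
Your proposal is correct and follows essentially the route the paper intends: the paper proves Proposition \ref{l8} precisely by "combining estimate \eqref{5} with the arguments of Lemma \ref{l1}", i.e. substituting the forward intermediate estimate \eqref{5} (boundary terms vanishing for compactly supported $w$, the nonnegative $t=T$ term dropped) for \eqref{lll1b}, swapping $\Omega^0$ and $\Omega^T$, and then running the same pseudodifferential, interpolation and mollification machinery, including the final cutoff argument with $\psi_0,\psi_1$. Your bookkeeping of the conjugated first-order term $P_{3,+,A,B,q}$ and of the vanishing of $w$ at $t=0$ is accurate, so no gaps.
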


\subsection{Remainder term}

In this subsection we will complete the construction of exponentially growing solutions $u_1\in L^2(0,T;H^1(\Omega))$ of the equation \eqref{Gsol1} and exponentially decaying solutions $u_2\in L^2(0,T;H^1(\Omega))$ of the equation \eqref{Gsol2} taking the form \eqref{GO12}.
We state these results in the following way.
\begin{prop}\label{p4} There exists $\rho_3>\rho_2$ such that for $\rho>\rho_3$ we can find a solution $u_1\in L^2(0,T;H^1(\Omega))$ of \eqref{Gsol1}  taking the form \eqref{GO12} 
with $w_{1,\rho}\in H^1(0,T;H^{-1}(\Omega))\cap L^2(0,T;H^1(\Omega))$ satisfying 
\bel{CGO13}\lim_{\rho\to+\infty}\rho^{-1}(\norm{w_{1,\rho}}_{L^2(0,T;H^1(\Omega))}+\rho \norm{w_{1,\rho}}_{L^2(Q)})=0,\ee
with $C$ depending on $\Omega$, $T$ and $M\geq \norm{A_1}_{L^\infty(Q)^n}+\norm{B_1}_{L^\infty(Q)^n}+\norm{q_1}_{L^\infty(0,T;L^{p}(\Omega))}$, when $q_1\in L^\infty(0,T;L^{p}(\Omega))$  with $p>2n/3$ and $M\geq \norm{A_1}_{L^\infty(Q)^n}+\norm{B_1}_{L^\infty(Q)^n}+\norm{q_1}_{L^\infty(0,T;L^{\frac{2n}{3}}(\Omega))}$ when $q_1\in \mathcal C([0,T];L^{\frac{2n}{3}}(\Omega))$.\end{prop}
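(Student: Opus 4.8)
The plan is to produce $w_{1,\rho}$ by a duality (Hahn--Banach) argument based on the Carleman estimate in negative order Sobolev spaces, Proposition~\ref{l1}, applied to the transpose of the conjugated operator.

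\textbf{Reduction to a right inverse.} First I would plug the ansatz \eqref{GO12} into \eqref{Gsol1} and conjugate by $e^{\rho^2t+\rho x\cdot\omega}$. Since $e^{-(\rho^2t+\rho x\cdot\omega)}(\pd_t-\Delta_x+A_1\cdot\nabla_x+\nabla_x\cdot B_1+q_1)e^{\rho^2t+\rho x\cdot\omega}=P_{A_1,B_1,q_1,+}$ and $b_{1,\rho}$ vanishes on $\Omega^0$ by \eqref{cond4}, it is enough to find $w_{1,\rho}$ solving $P_{A_1,B_1,q_1,+}w_{1,\rho}=G_\rho:=-P_{A_1,B_1,q_1,+}b_{1,\rho}$ in $Q$ with $w_{1,\rho}=0$ on $\Omega^0$. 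A direct computation gives
\[P_{A_1,B_1,q_1,+}b_{1,\rho}=-\rho\big(2\omega\cdot\nabla_xb_{1,\rho}-(A_1\cdot\omega)b_{1,\rho}\big)+\pd_tb_{1,\rho}-\Delta_xb_{1,\rho}+A_1\cdot\nabla_xb_{1,\rho}+(\nabla_x\cdot B_1)b_{1,\rho}+q_1b_{1,\rho},\]
and the crucial step is to prove $\lim_{\rho\to+\infty}\norm{G_\rho}_{L^2(0,T;H^{-1}_\rho(\R^n))}=0$ (after multiplying $G_\rho$ by a fixed cutoff equal to $1$ near $\overline\Omega$, which does not change its pairing with functions supported in $\Omega$). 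The term of order $\rho$ has $H^{-1}_\rho$-norm controlled by $\norm{(2\omega\cdot\nabla_x-(A_1\cdot\omega))b_{1,\rho}}_{L^2}\to0$ from \eqref{cond5}; the terms $\pd_tb_{1,\rho}$, $\Delta_xb_{1,\rho}$, $A_1\cdot\nabla_xb_{1,\rho}$ are $O(\rho^{-1/3})$ in $H^{-1}_\rho$ by \eqref{cond31}--\eqref{cond33}; the term $q_1b_{1,\rho}$ is treated exactly as the term $q\left\langle D_x,\rho\right\rangle w$ in the proof of Proposition~\ref{l1} (Sobolev embedding plus interpolation in the $H^s_\rho$-scale), giving a bound $O(\rho^{-1/2})$. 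For the genuinely distributional term $(\nabla_x\cdot B_1)b_{1,\rho}$ I would split $B_1=B_{1,\rho}+(B_1-B_{1,\rho})$ with $B_{1,\rho}$ the mollification of Subsection~4.1: then $(\nabla_x\cdot B_{1,\rho})b_{1,\rho}$ has $L^2$-norm $O(\rho^{1/3})$, hence $H^{-1}_\rho$-norm $O(\rho^{-2/3})$, while $(\nabla_x\cdot(B_1-B_{1,\rho}))b_{1,\rho}=\nabla_x\cdot\big((B_1-B_{1,\rho})b_{1,\rho}\big)-(B_1-B_{1,\rho})\cdot\nabla_xb_{1,\rho}$ has $H^{-1}_\rho$-norm $\leq\norm{(B_1-B_{1,\rho})b_{1,\rho}}_{L^2}+\rho^{-1}\norm{(B_1-B_{1,\rho})\cdot\nabla_xb_{1,\rho}}_{L^2}\to0$, using $\norm{B_1-B_{1,\rho}}_{L^2(\R^{1+n})}\to0$ and $\norm{\nabla_xb_{1,\rho}}_{L^\infty}=O(\rho^{1/3})$.

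\textbf{Duality.} The formal transpose of $P_{A_1,B_1,q_1,+}$ is $P_{A_1,B_1-A_1,q_1,-}$, for which Proposition~\ref{l1} yields, for $\rho>\rho_2'$ and every $\varphi\in\mathcal C^1([0,T];\mathcal C^\infty_0(\Omega))$ with $\varphi=0$ on $\Omega^T$, the bound $\rho^{-1/2}\norm{\varphi}_{L^2(0,T;H^1(\R^n))}+\norm{\varphi}_{L^2(Q)}\leq C\norm{P_{A_1,B_1-A_1,q_1,-}\varphi}_{L^2(0,T;H^{-1}_\rho(\R^n))}$, hence also $\norm{\varphi}_{L^2(0,T;H^1_\rho(\R^n))}\leq C\rho\norm{P_{A_1,B_1-A_1,q_1,-}\varphi}_{L^2(0,T;H^{-1}_\rho(\R^n))}$. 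In particular $\varphi\mapsto P_{A_1,B_1-A_1,q_1,-}\varphi$ is injective, so $P_{A_1,B_1-A_1,q_1,-}\varphi\mapsto\int_QG_\rho\varphi\,dxdt$ is a well-defined linear form on a subspace of $L^2(0,T;H^{-1}_\rho(\R^n))$, and the inequality $\big|\int_QG_\rho\varphi\big|\leq\norm{G_\rho}_{L^2(0,T;H^{-1}_\rho)}\norm{\varphi}_{L^2(0,T;H^1_\rho)}$ shows it is bounded there by $C\rho\norm{G_\rho}_{L^2(0,T;H^{-1}_\rho)}$ times the $L^2(0,T;H^{-1}_\rho)$-norm. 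Extending by Hahn--Banach and using that the dual of $L^2(0,T;H^{-1}_\rho(\R^n))$ is $L^2(0,T;H^1_\rho(\R^n))$, I obtain $w\in L^2(0,T;H^1_\rho(\R^n))$ with $\int_Qw\,P_{A_1,B_1-A_1,q_1,-}\varphi=\int_QG_\rho\varphi$ for all such $\varphi$ and $\norm{w}_{L^2(0,T;H^1_\rho(\R^n))}\leq C\rho\norm{G_\rho}_{L^2(0,T;H^{-1}_\rho)}$. Integrating by parts and letting $\varphi$ vary — first among those vanishing near $t=0$, then freely — this identity is equivalent to $P_{A_1,B_1,q_1,+}w=G_\rho$ in $\mathcal D'(Q)$ together with $w=0$ on $\Omega^0$; one then sets $w_{1,\rho}:=w_{|Q}$.

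\textbf{Conclusion and main difficulty.} From $\norm{w_{1,\rho}}_{L^2(0,T;H^1(\Omega))}\leq\norm{w}_{L^2(0,T;H^1_\rho(\R^n))}$ and $\norm{w_{1,\rho}}_{L^2(Q)}\leq\rho^{-1}\norm{w}_{L^2(0,T;H^1_\rho(\R^n))}$ one gets $\rho^{-1}\norm{w_{1,\rho}}_{L^2(0,T;H^1(\Omega))}+\norm{w_{1,\rho}}_{L^2(Q)}\leq C\norm{G_\rho}_{L^2(0,T;H^{-1}_\rho)}\to0$, which is \eqref{CGO13}; that $w_{1,\rho}\in H^1(0,T;H^{-1}(\Omega))$ follows by solving for $\pd_tw_{1,\rho}$ in the equation, and $u_1\in L^2(0,T;H^1(\Omega))$ since $b_{1,\rho}\in L^\infty(0,T;W^{1,\infty}(\Omega))$ and the weight is smooth on $\overline Q$. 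One takes $\rho_3>\rho_2'$ large enough that, say, $\norm{G_\rho}_{L^2(0,T;H^{-1}_\rho)}<1$ for $\rho>\rho_3$. The main obstacle is exactly the estimate $\norm{G_\rho}_{L^2(0,T;H^{-1}_\rho)}\to0$: the leading term of order $\rho$ decays only thanks to the approximate transport equation \eqref{cond5}, and the distributional term $(\nabla_x\cdot B_1)b_{1,\rho}$ — which is merely $O(1)$ in $H^{-1}_\rho$ if estimated directly — requires the mollification splitting of $B_1$ described above.
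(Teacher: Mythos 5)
Your proposal is correct and follows essentially the same route as the paper's proof: reduction to $P_{A_1,B_1,q_1,+}w_{1,\rho}=F_\rho$ after a spatial cutoff, the decay of $\norm{F_\rho}_{L^2(0,T;H^{-1}_\rho(\R^n))}$ via the approximate transport property \eqref{cond5}, the bounds \eqref{cond31}--\eqref{cond33}, the mollification of $B_1$ and a H\"older--Sobolev treatment of $q_1$, followed by the Hahn--Banach duality argument based on the negative-order Carleman estimate of Proposition \ref{l1} applied to the transposed operator with test functions vanishing at $t=T$ (the equation in $\mathcal D'(Q)$ from compactly supported test functions, the condition $w_{1,\rho}|_{\Omega^0}=0$ from arbitrary data at $t=0$, and \eqref{CGO13} from the norm bound on the extended functional). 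The only cosmetic deviations are your slightly different splitting of the term $(\nabla_x\cdot B_1)b_{1,\rho}$ and your identification of the transpose as $P_{A_1,B_1-A_1,q_1,-}$ where the paper writes $P_{-A_1,B_1-A_1,q_1,-}$; neither affects the argument, since Proposition \ref{l1} holds for arbitrary bounded first-order coefficients.
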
 
\begin{prop}\label{p5} There exists $\rho_4>\rho_3$ such that for $\rho>\rho_4$ we can find a solution  $u_2\in L^2(0,T;H^1(\Omega))$ of \eqref{Gsol2} taking the form \eqref{GO12}
with $w_{2,\rho}\in H^1(0,T;H^{-1}(\Omega))\cap  L^2(0,T;H^1(\Omega))$ satisfying  
\bel{CGO14}\lim_{\rho\to+\infty}\rho^{-1}(\norm{w_{2,\rho}}_{L^2(0,T;H^1(\Omega))}+\rho \norm{w_{2,\rho}}_{L^2(Q)})=0,\ee
with $C$ depending on $\Omega$, $T$, $M\geq \norm{A_2}_{L^\infty(Q)^n}+\norm{B_2}_{L^\infty(Q)^n}+\norm{q_2}_{L^\infty(0,T;L^{p}(\Omega))}$, when $q_2\in L^\infty(0,T;L^{p}(\Omega))$ with $p>2n/3$ and $M\geq \norm{A_2}_{L^\infty(Q)^n}+\norm{B_2}_{L^\infty(Q)^n}+\norm{q_2}_{L^\infty(0,T;L^{\frac{2n}{3}}(\Omega))}$ when $q_2\in \mathcal C([0,T];L^{\frac{2n}{3}}(\Omega))$.\end{prop}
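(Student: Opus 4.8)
The plan is to build $u_2$ by a duality argument based on the Carleman estimate in negative order Sobolev spaces of Proposition \ref{l8}, in the same way as for $u_1$ in Proposition \ref{p4} but with the roles of $\Omega^0$ and $\Omega^T$ (and of the forward and backward heat operators) interchanged. Writing $\psi(x,t):=\rho^2t+\rho x\cdot\omega$, one checks that $u_2=e^{-\psi}(b_{2,\rho}+w_{2,\rho})$ solves \eqref{Gsol2} precisely when
\[
\mathcal P_2\, w_{2,\rho}=F_\rho:=-\mathcal P_2\, b_{2,\rho},\qquad \mathcal P_2:=e^{\psi}\big(L_{-,A_2}+\nabla_x\cdot(B_2-A_2)+q_2\big)e^{-\psi},
\]
together with $w_{2,\rho\,|\Omega^T}=0$ (recall $b_{2,\rho\,|\Omega^T}=0$ by \eqref{cond4}). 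Since $\psi$ is affine in $x$ and linear in $t$, the terms of order $\rho^2$ cancel and $\mathcal P_2=-\partial_t-\Delta_x+2\rho\,\omega\cdot\nabla_x-A_2\cdot\nabla_x+\rho(A_2\cdot\omega)+\nabla_x\cdot(B_2-A_2)+q_2$.

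First I would show that $F_\rho$ is small. Its term of order $\rho$ equals $\rho\big(2\omega\cdot\nabla_x+A_2\cdot\omega\big)b_{2,\rho}=\rho\big((A_2-A_{2,\rho})\cdot\omega\big)b_{2,\rho}$, since $b_{2,\rho}$ solves the transport equation with $A_{2,\rho}$ by \eqref{conde2}; by \eqref{cond6} its $L^2$ norm is $o(1)$, so its $L^2(0,T;H^{-1}_\rho(\R^n))$ norm is $\leq\rho^{-1}o(\rho)=o(1)$. I would then split the rest as $F_\rho^{(0)}+\nabla_x\cdot G_\rho$ with $G_\rho=-(B_2-A_2)b_{2,\rho}$: by \eqref{cond31}--\eqref{cond33} the contributions $\partial_tb_{2,\rho},\ \Delta_xb_{2,\rho},\ A_2\cdot\nabla_xb_{2,\rho},\ (B_2-A_2)\cdot\nabla_xb_{2,\rho}$ to $F_\rho^{(0)}$ are $O(\rho^{2/3})$ in $L^2$, hence $O(\rho^{-1/3})$ in $L^2(0,T;H^{-1}_\rho(\R^n))$, while $q_2b_{2,\rho}$ is handled exactly as the potential term in the proof of Proposition \ref{l1} (H\"older, Sobolev embedding and interpolation, using that $b_{2,\rho}$ is bounded in $L^\infty$ and compactly supported), producing a negative power of $\rho$; and $G_\rho$ is bounded in $L^\infty(Q)^n\subset L^2(Q)^n$. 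Thus $\|F_\rho^{(0)}\|_{L^2(0,T;H^{-1}_\rho(\R^n))}\to0$ and $\|G_\rho\|_{L^2(Q)^n}$ stays bounded.

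Next I would solve the remainder equation. A computation of the formal adjoint gives $\mathcal P_2^*=e^{-\psi}\big(L_{+,A_2}+\nabla_x\cdot B_2+q_2\big)e^{\psi}=P_{A_2,B_2,q_2,+}$, so Proposition \ref{l8} applies to $\mathcal P_2^*v$ for $v\in\mathcal C^1([0,T];\mathcal C^\infty_0(\Omega))$ with $v_{|\Omega^0}=0$, its constant depending on $M\geq\|A_2\|_{L^\infty(Q)^n}+\|B_2\|_{L^\infty(Q)^n}+\|q_2\|$ as required. On the subspace $\{\mathcal P_2^*v:\ v\in\mathcal C^1([0,T];\mathcal C^\infty_0(\Omega)),\ v_{|\Omega^0}=0\}$ of $L^2(0,T;H^{-1}_\rho(\R^n))$ the functional $\mathcal P_2^*v\mapsto\langle v,F_\rho\rangle$ is well defined ($\mathcal P_2^*v=0$ and $v_{|\Omega^0}=0$ force $v\equiv0$), and by \eqref{l8a}, together with $\langle v,\nabla_x\cdot G_\rho\rangle=-\langle\nabla_xv,G_\rho\rangle$ and the bound $\rho^{-1/2}\|v\|_{L^2(0,T;H^1(\R^n))}\leq C\|\mathcal P_2^*v\|_{L^2(0,T;H^{-1}_\rho(\R^n))}$, it has norm $\leq C\big(\rho\|F_\rho^{(0)}\|_{L^2(0,T;H^{-1}_\rho(\R^n))}+\rho^{1/2}\|G_\rho\|_{L^2(Q)^n}\big)$. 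Extending by Hahn--Banach and using the duality $(L^2(0,T;H^{-1}_\rho(\R^n)))^*=L^2(0,T;H^1_\rho(\R^n))$ produces $w_{2,\rho}$ satisfying $\langle\mathcal P_2^*v,w_{2,\rho}\rangle=\langle v,F_\rho\rangle$ for all such $v$, together with the corresponding bound on $\|w_{2,\rho}\|_{L^2(0,T;H^1_\rho(\R^n))}$; this is the weak form of $\mathcal P_2w_{2,\rho}=F_\rho$ with the natural condition $w_{2,\rho\,|\Omega^T}=0$, and the equation then yields $\partial_tw_{2,\rho}\in L^2(0,T;H^{-1}(\Omega))$. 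Setting $u_2:=e^{-\psi}(b_{2,\rho}+w_{2,\rho})$ gives a solution of \eqref{Gsol2} of the form \eqref{GO12}, and dividing the bound by $\rho$ yields $\rho^{-1}\|w_{2,\rho}\|_{L^2(0,T;H^1(\Omega))}+\|w_{2,\rho}\|_{L^2(Q)}\leq C\big(\|F_\rho^{(0)}\|_{L^2(0,T;H^{-1}_\rho(\R^n))}+\rho^{-1/2}\|G_\rho\|_{L^2(Q)^n}\big)\to0$, which is \eqref{CGO14}.

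The main obstacle is the functional-analytic bookkeeping of the last step rather than any single estimate: one must choose the test space ($v_{|\Omega^0}=0$) so that $\mathcal P_2^*=P_{A_2,B_2,q_2,+}$ and Proposition \ref{l8} applies verbatim, keep the signs in the weight $\psi$ straight, split $F_\rho$ so that its divergence part $\nabla_x\cdot G_\rho$ is absorbed through the $\rho^{-1/2}\|v\|_{L^2(0,T;H^1)}$ term of the Carleman estimate, and verify that the $w_{2,\rho}$ delivered by Hahn--Banach actually vanishes on $\Omega^T$ and lies in $H^1(0,T;H^{-1}(\Omega))\cap L^2(0,T;H^1(\Omega))$ (using the cutoff argument from the end of the proof of Proposition \ref{l1} to transfer between $\Omega$ and $\R^n$, and the equation itself for the time regularity). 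The contribution of the rough potential $q_2b_{2,\rho}$ to $F_\rho$, although conceptually routine once Proposition \ref{l1} is available, must be treated with the same care.
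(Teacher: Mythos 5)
Your proposal is correct and takes essentially the same route as the paper, which proves Proposition \ref{p4} in detail and treats Proposition \ref{p5} as its mirror image: you reduce to the conjugated equation for $w_{2,\rho}$ (with the correct identification of the adjoint as $P_{A_2,B_2,q_2,+}$), show the source term is $o(1)$ in $L^2(0,T;H^{-1}_\rho(\R^n))$ via \eqref{cond31}--\eqref{cond33}, \eqref{cond6} and the Sobolev/H\"older treatment of $q_2b_{2,\rho}$, and then build $w_{2,\rho}$ by the Hahn--Banach duality argument based on the Carleman estimate \eqref{l8a} for test functions vanishing on $\Omega^0$, exactly as the paper does for $w_{1,\rho}$ with Proposition \ref{l1}. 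The only (harmless) deviation is that you keep $\nabla_x\cdot[(B_2-A_2)b_{2,\rho}]$ in divergence form and absorb it through the $\rho^{-1/2}\norm{v}_{L^2(0,T;H^1)}$ term of \eqref{l8a}, gaining a factor $\rho^{-1/2}$, whereas the paper mollifies the vector field and estimates this contribution directly in $H^{-1}_\rho$; both yield the decay \eqref{CGO14}.
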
 

The proof of these two propositions  being similar, we will only consider the one of Proposition \ref{p4}. 

\ \\
 \textbf{Proof of Proposition \ref{p4}.} Note first that the condition $L_{A_1,B_1}u_1+q_1u_1=0$ is fulfilled if and only if $w_{1,\rho}$
solves
$$P_{A_1,B_1,q_1,+}w_{1,\rho}=-P_{A_1,B_1,q_1,+}b_{1,\rho}=\rho(2\omega\cdot\nabla_xb_{1,\rho}-A_1\cdot\omega b_{1,\rho})-(L_{A_1}+\nabla_x\cdot B_1+q_1)b_{1,\rho}.$$
Therefore, fixing $\phi_1\in\mathcal C^\infty_0(\R^n)$, such that $\phi_1=1$ on $\overline{\Omega}$, and
$$F_\rho(x,t)=\phi_1(x)[\rho(2\omega\cdot\nabla_xb_{1,\rho}-A_1\cdot\omega b_{1,\rho})(x,t)-L_{A_1,B_1,q_1}b_{1,\rho}(x,t)]$$
we can consider $w_{1,\rho}$ as a solution of 
\bel{CGO12}P_{A_1,q,+}w_{1,\rho}(x,t)=F_\rho(x,t),\quad (x,t)\in Q.\ee
In the expression of $F_\rho$, we assume that $A_1$, $B_1$ and $q_1$ are extended by zero to a function of $\R^n\times(0,T)$.
Let us first show that,  we have
\bel{p4c}\lim_{\rho\to+\infty}\norm{F_\rho}_{L^2(0,T;H^{-1}_\rho(\R^n))}=0.\ee
For this purpose, note first that, applying \eqref{cond31} and fixing $\tilde{Q}=\tilde{\Omega}\times(0,T)$ with $\tilde{\Omega}$ a bounded open set of $\R^n$ such that supp$(\phi)\subset \tilde{\Omega}$, we find
\bel{p4d}\begin{aligned}&\norm{F_\rho}_{L^2(0,T;H^{-1}_\rho(\R^n))}\\
&\leq  \norm{2\omega\cdot\nabla_xb_{1,\rho}-A_1\cdot\omega b_{1,\rho}}_{L^2(\tilde{Q})}+\rho^{-1}\norm{L_{A_1}b_{1,\rho}}_{L^2(\tilde{Q})}+\norm{(\nabla_x\cdot B_1)b_{1,\rho}}_{L^2(0,T;H^{-1}_\rho(\R^n))}+\norm{q_1b_{1,\rho}}_{L^2(0,T;H^{-1}_\rho(\R^n))}\\
&\leq  \norm{2\omega\cdot\nabla_xb_{1,\rho}-A_1\cdot\omega b_{1,\rho}}_{L^2(\tilde{Q})}+C\rho^{-\frac{1}{3}}+\norm{(\nabla_x\cdot B_1)b_{1,\rho}}_{L^2(0,T;H^{-1}_\rho(\R^n))}+\norm{q_1b_{1,\rho}}_{L^2(0,T;H^{-1}_\rho(\R^n))},\end{aligned}\ee
with $C>0$ independent of $ \rho$.
Let us first consider the third term on the right hand side of this inequality. We fix $B_{1,\rho}$  given by
$$B_{1,\rho}(x,t):=\int_{\R^{1+n}}\chi_\rho(x-y,t-s)B_1(y,s)dsdy$$ 
with $B_1$ extended by zero to a function defined on $\R^{1+n}$. Then,  for any $\psi_1\in L^2(0,T;\mathcal C^\infty_0(\R^n))$, we obtain
\bel{p4e}\begin{aligned}&\abs{\left\langle (\nabla_x\cdot B_1)b_{1,\rho},\psi_1 \right\rangle_{L^2(0,T;H^{-1}_\rho(\R^n)),L^2(0,T;H^{1}_\rho(\R^n))}}\\
&\leq\abs{\left\langle ( B_1\cdot \nabla_x b_{1,\rho},\psi_1 \right\rangle_{L^2(Q)}}+\abs{\left\langle  b_{1,\rho},(B_1-B_{1,\rho})\cdot\nabla_x \psi_1 \right\rangle_{L^2(\R^n\times(0,T))}}+\abs{\left\langle  b_{1,\rho},B_{1,\rho}\cdot\nabla_x \psi_1 \right\rangle_{L^2(\R^n\times(0,T))}}\\
&\leq\abs{\left\langle ( B_1\cdot \nabla_x b_{1,\rho},\psi_1 \right\rangle_{L^2(Q)}}+\abs{\left\langle  b_{1,\rho},(B_1-B_{1,\rho})\cdot\nabla_x \psi_1 \right\rangle_{L^2(\R^n\times(0,T))}}+\abs{\left\langle  \nabla_x\cdot( b_{1,\rho}B_{1,\rho}), \psi_1 \right\rangle_{L^2(\R^n\times(0,T))}}\end{aligned}\ee
For the first term on the right hand side of this inequality, applying \eqref{cond31}, we find
\bel{p4f}\abs{\left\langle ( B_1\cdot \nabla_x b_{1,\rho},\psi_1 \right\rangle_{L^2(Q)}}\leq C\norm{B_1}_{L^\infty(Q)}\rho^{{1\over3}}\norm{\psi}_{L^2(Q)}\leq C\rho^{-{2\over3}}\norm{\psi}_{L^2(0,T;H^1_\rho(\R^n))}\ee
with $C$ independent of $\rho$. For the second term on the right hand side of \eqref{p4e}, we obtain

\bel{p4g}\begin{aligned}\abs{\left\langle  b_{1,\rho},(B_1-B_{1,\rho})\cdot\nabla_x \psi_1 \right\rangle_{L^2(\R^n\times(0,T))}}&\leq \norm{b_{1,\rho}}_{L^\infty(\R^n\times(0,T))}\norm{(B_1-B_{1,\rho})}_{L^2(\R^{1+n})}\norm{\psi_1}_{L^2(0,T;H^1_\rho(\R^n))}\\
\ &\leq C\norm{(B_1-B_{1,\rho})}_{L^2(\R^{1+n})}\norm{\psi_1}_{L^2(0,T;H^1_\rho(\R^n))}\end{aligned}\ee
For the last term on the right hand side of \eqref{p4e}, we get
$$\begin{aligned}\abs{\left\langle  \nabla_x\cdot( b_{1,\rho}B_{1,\rho}), \psi_1 \right\rangle_{L^2(\R^n\times(0,T))}}&\leq \norm{b_{1,\rho}}_{L^\infty(0,T;W^{1,\infty}(\R^n)}\norm{B_\rho}_{L^2(0,T;H^1(\R^n))}\norm{\psi_1}_{L^2(\R^n\times(0,T))}\\
\ &\leq C\rho^{-\frac{1}{3}}\norm{\psi_1}_{L^2(0,T;H^1_\rho(\R^n))}.\end{aligned}$$
Combining this estimate with \eqref{p4d}-\eqref{p4g}, we obtain
$$\begin{aligned}&\abs{\left\langle (\nabla_x\cdot B_1)b_{1,\rho},\psi_1 \right\rangle_{L^2(0,T;H^{-1}_\rho(\R^n)),L^2(0,T;H^{1}_\rho(\R^n))}}\\
&\leq C[\rho^{-\frac{1}{3}}+\norm{(B_1-B_{1,\rho})}_{L^2(\R^{1+n})}]\norm{\psi_1}_{L^2(0,T;H^1_\rho(\R^n))}\end{aligned}$$
and, using the fact that
$$\lim_{\rho\to+\infty}\norm{(B_1-B_{1,\rho})}_{L^2(\R^{1+n})}=0,$$ 
we obtain
\bel{p4h} \lim_{\rho\to+\infty}\norm{(\nabla_x\cdot B_1)b_{1,\rho}}_{L^2(0,T;H^{-1}_\rho(\R^n))}=0.\ee
For the last term on the right hand side of \eqref{p4d}, fixing $\psi_1\in L^2(0,T;\mathcal C^\infty_0(\R^n))$, we find
$$\begin{aligned}&\abs{\left\langle q_1b_{1,\rho},\psi_1\right\rangle_{L^2(0,T;H^{-1}_\rho(\R^n)),L^2(0,T;H^{1}_\rho(\R^n))}}\\
&\leq\norm{b_{1,\rho}}_{L^\infty(Q)}\left(\int_Q|q_1||\psi_1|dxdt\right)\leq C\left(\int_Q|q_1||\psi_1|dxdt\right).\end{aligned}$$
For $n=2$, we find
$$\int_Q|q_1||\psi_1|dxdt\leq C\norm{q_1}_{L^\infty(0,T;L^{\frac{4}{3}}(\Omega))}\norm{\psi_1}_{L^2(0,T;L^4(\R^n))}.$$
Applying the Sobolev embedding theorem, we get
$$\begin{aligned}\norm{\psi_1}_{L^2(0,T;L^4(\R^n)}\leq C\norm{\psi_1}_{L^2(0,T;H^{\frac{1}{2}}(\R^n))}&\leq C\norm{\psi_1}_{L^2(0,T;H^{1}(\R^n))}^{\frac{1}{2}}\norm{\psi_1}_{L^2(0,T;L^2(\R^n))}^{\frac{1}{2}}\\
\ &\leq C\rho^{-\frac{1}{2}}\norm{\psi_1}_{L^2(0,T;H^{1}_\rho(\R^n))}.\end{aligned}$$
It follows, 
\bel{p4i}\int_Q|q_1||\psi_1|dxdt\leq C\rho^{-\frac{1}{2}}\norm{q_1}_{L^\infty(0,T;L^{\frac{2n}{3}}(\Omega))}\norm{\psi_1}_{L^2(0,T;H^{1}_\rho(\R^n))}.\ee
In the same way, for $n\geq3$, we have
$$\int_Q|q_1||\psi_1|dxdt\leq C\norm{q_1}_{L^2(Q)}\norm{\psi_1}_{L^2(0,T;L^2(\R^n)}\leq C\rho^{-1}\norm{q_1}_{L^\infty(0,T;L^{\frac{2n}{3}}(\Omega))}\norm{\psi_1}_{L^2(0,T;H^{1}_\rho(\R^n))}.$$
Combining these estimates with \eqref{p4h}-\eqref{p4i}, we obtain
\bel{p4j}\lim_{\rho\to+\infty}\norm{q_1b_{1,\rho}}_{L^2(0,T;H^{-1}_\rho(\R^n))}=0.\ee
Putting conditions \eqref{cond5}, \eqref{cond6}, \eqref{p4d}, \eqref{p4h} and \eqref{p4j} together, we deduce \eqref{p4c}.

We will now apply estimate \eqref{car2} to build a solution $w_{1,\rho}\in L^2(0,T;H^1(\Omega))\cap H^1(0,T;H^{-1}(\Omega))$ to \eqref{CGO12} satisfying $w_{1,\rho}(0,\cdot)=0$ and \eqref{CGO13}. We fix $\tilde{\Omega}$ a smooth bounded open set  of $\R^n$ such that $\overline{\Omega}\subset \tilde{\Omega}$.
Applying the Carleman estimate \eqref{car2},  we define the linear form $\mathcal K_\rho$ on $\{P_{-A_1,B_1-A_1,q_1,-}z:\ z\in\mathcal C^\infty([0,T];\mathcal C^\infty_0(\tilde{\Omega})),\ z_{|\tilde{\Omega}^T}=0\}$, considered as a subspace of $L^2(0,T;H^{-1}_\rho(\R^{n}))$ by
\[\mathcal K_\rho(P_{-A_1,B_1-A_1,q_1,-}z)=\left\langle F_\rho,z\right\rangle_{L^2(0,T;H^{-1}_\rho(\R^n)),L^2(0,T;H^{1}_\rho(\R^n))},\quad z\in\mathcal C^\infty([0,T];\mathcal C^\infty_0(\tilde{\Omega})),\ z_{|\tilde{\Omega}^T}=0.\]
Then, \eqref{car2} implies that, for all $z\in\mathcal C^\infty([0,T];\mathcal C^\infty_0(\tilde{\Omega}))$ satisfying $z_{|\tilde{\Omega}^T}=0$, we have
\[\begin{aligned}|\mathcal K_\rho(P_{-A_1,B_1-A_1,q_1,-}z)|&\leq \rho\norm{F_\rho}_{L^2(0,T;H^{-1}_\rho(\R^n))}(\rho^{-1}\norm{z}_{L^2(0,T;H^{1}_\rho(\R^n))})\\
\ &\leq C\rho\norm{F_\rho}_{L^2(0,T;H^{-1}_\rho(\R^n))}\norm{P_{-A_1,B_1-A_1,q_1,-}z}_{L^2(0,T;H^{-1}_\rho(\R^{n}))}\end{aligned}.\]
Thus, by the Hahn Banach theorem we can extend $\mathcal K_\rho$ to a continuous linear form on $L^2(0,T;H^{-1}_\rho(\R^{n}))$ still denoted by $\mathcal K_\rho$ and satisfying $\norm{\mathcal K_\rho}\leq C\rho\norm{F_\rho}_{L^2(0,T;H^{-1}(\R^n))}$. Therefore, there exists $w_{1,\rho}\in L^2(0,T;H^{1}_\rho(\R^{n}))$ such that 
\[\left\langle h,w_{1,\rho}\right\rangle_{L^2(0,T;H^{-1}_\rho(\R^{n})),L^2(0,T;H^{1}_\rho(\R^{n}))}=\mathcal K_\rho(h),\quad h\in L^2(0,T;H^{-1}_\rho(\R^{n})).\]
Choosing $h=P_{-A_1,B_1-A_1,q_1,-}z$ with $z\in \mathcal C^\infty_0(Q)$ proves that $w_{1,\rho}$ satisfies $P_{A_1,B_1,q_1,+}w_{1,\rho}=F_\rho$ in $Q$. In particular, we deduce that $w_{1,\rho}\in H^1(0,T;H^{-1}(\Omega))\cap L^2(0,T;H^1(\Omega))$. Moreover, fixing $h=P_{-A_1,B_1-A_1,q_1,-}z$ with $z\in\mathcal C^\infty([0,T];\mathcal C^\infty_0(\tilde{\Omega}))$, $z_{|\tilde{\Omega}^T}=0$ and allowing $z_{|\tilde{\Omega}^0}$ to be arbitrary proves that $w_{1,\rho}=0$ on $\Omega^0$. In addition, applying \eqref{p4c}, we get $$\limsup_{\rho\to+\infty}\rho^{-1}\norm{w_{1,\rho}}_{L^2(0,T;H^{1}_\rho(\R^{n}))}\leq \limsup_{\rho\to+\infty}\rho^{-1}\norm{\mathcal K_\rho}\leq C\limsup_{\rho\to+\infty}\norm{F_\rho}_{L^2(0,T;H^{-1}_\rho(\R^n))}=0.$$ 
Therefore,  $w_{1,\rho}$ fulfills \eqref{CGO12},  $w_{1,\rho}(\cdot,0)=0$ and \eqref{CGO13}. This completes the proof of the proposition.\qed

\section{Recovery  from the  DN map}
In this section we will prove Theorem \ref{t5}. For this purpose, applying Proposition \ref{p4} and \ref{p5}, we fix a solution $u_1\in H^1(0,T;H^{-1}(\Omega))\cap L^2(0,T;H^1(\Omega))$ of \eqref{Gsol1} of the form \eqref{GO12} and a solution $u_2\in H^1(0,T;H^{-1}(\Omega))\cap L^2(0,T;H^1(\Omega))$ of \eqref{Gsol2} given by \eqref{GO12}, with $w_{j,\rho}$, $j=1,2$, satisfying the decay property \eqref{CGO11}

\subsection{Recovery of the first order coefficient}

According to \eqref{t5a} and \eqref{p3a}, we have
$$\int_QA\cdot\nabla_xu_1u_2dxdt-\int_QB\cdot\nabla_x(u_1u_2)dxdt+\int_Qqu_1u_2dxdt=0,$$
with $A=A_1-A_2$, $B=B_1-B_2$ and $q=q_1-q_2$.
On the other hand, we find
\bel{t1f}\int_QA\cdot\nabla_xu_1u_2dxdt-\int_QB\cdot\nabla_x(u_1u_2)dxdt+\int_Qqu_1u_2dxdt=\rho\int_Q(A\cdot\omega)b_{1,\rho}b_{2,\rho}dxdt+\int_QZ_\rho(x,t)dxdt\ee
with $$Z_\rho=A\cdot\nabla_xb_{1,\rho}(b_{2,\rho}+w_{2,\rho})+B\cdot\nabla_x[(b_{1,\rho}+w_{1,\rho})(b_{2,\rho}+w_{2,\rho})]+A\cdot\nabla_xw_{1,\rho}(b_{2,\rho}+w_{2,\rho})+q(b_{1,\rho}+w_{1,\rho})(b_{2,\rho}+w_{2,\rho}).$$
In view of \eqref{CGO11} and \eqref{cond31}-\eqref{cond33}, we have
\bel{t1g}\lim_{\rho\to+\infty}\rho^{-1}\abs{\int_QZ_\rho(x,t)dxdt}=0.\ee
Moreover, we deduce that
$$\begin{aligned}\int_Q(A\cdot\omega)b_{1,\rho}b_{2,\rho}dxdt=&\int_\R\int_{\R^n}((A-A_\rho)\cdot\omega)b_{1,\rho}b_{2,\rho}dxdt-\int_0^{+\infty}\int_{\R^n}(A_\rho\cdot\omega)e^{-\rho^{1\over3}t}b_{2,\rho}dxdt\\
\ &-\int_{-\infty}^T\int_{\R^n}(A_\rho\cdot\omega)b_{1,\rho}e^{-\rho^{1\over3}(T-t)}dxdt\\
\ &+\int_\R\int_{\R^n}e^{-i(t\tau+x\cdot\xi)}A_\rho(x,t)\cdot\omega\exp\left(-{\int_0^{+\infty} A_{\rho}(x+s\omega,t)\cdot\omega ds\over2}\right)dxdt.\end{aligned}$$
Combining this with  \eqref{a1a} and applying Lebesgue dominate convergence theorem, we deduce that
$$\limsup_{\rho\to+\infty}\abs{\int_\R\int_{\R^n}e^{-i(t\tau+x\cdot\xi)}A_\rho(x,t)\cdot\omega\exp\left(-{\int_0^{+\infty} A_{\rho}(x+s\omega,t)\cdot\omega ds\over2}\right)dxdt}=\limsup_{\rho\to+\infty}\abs{\int_Q(A\cdot\omega)b_{1,\rho}b_{2,\rho}dxdt}.$$
In addition, applying \eqref{t1f}-\eqref{t1g}, we obtain
\bel{t1j}\limsup_{\rho\to+\infty}\abs{\int_\R\int_{\R^n}e^{-i(t\tau+x\cdot\xi)}(A_\rho(x,t)\cdot\omega)\exp\left(-{\int_0^{+\infty} A_{\rho}(x+s\omega,t)\cdot\omega ds\over2}\right)dxdt}=0\ee
On the other hand, decomposing $\R^{n}$ into the direct sum $\R^{n}=\R\omega\oplus\omega^{\bot}$ and applying the Fubini's theorem we get
\bel{t1k}\begin{aligned}&\int_\R\int_{\R^n}e^{-i(t\tau+x\cdot\xi)}(A_\rho(x,t)\cdot\omega)\exp\left(-{\int_0^{+\infty} A_{\rho}(x+s\omega,t)\cdot\omega ds\over2}\right)dxdt\\
&=\int_\R\int_{\omega^{\bot}}\left[\int_\R (A_{\rho}(y+s_2\omega,t)\cdot\omega)\exp\left(-{\int_{s_2}^{+\infty}A_{\rho}(y+s_1\omega,t)\cdot\omega ds_1\over 2}\right)ds_2\right]e^{-it\tau-i\xi\cdot y}dydt.\end{aligned}\ee
Moreover, for all $t\in(0,T)$ and all $y\in \omega^{\bot}$ we have
$$\begin{aligned}\int_\R A_{\rho}(y+s_2\omega,t)\cdot\omega\exp\left(-{\int_{s_2}^{+\infty}A_{\rho}(y+s_1\omega,t)\cdot\omega ds_1\over 2}\right)ds_2&=2\int_\R \pd_{s_2}\exp\left(-{\int_{s_2}^{+\infty}A_{\rho}(y+s_1\omega,t)\cdot\omega ds_1\over 2}\right)ds_2\\
\ &=2\left(1-\exp\left(-{\int_\R A_{\rho}(y+s_1\omega,t)\cdot\omega ds_1\over 2}\right)\right).\end{aligned}$$
Combining this with \eqref{t1k}, we find
\bel{t1l}\begin{array}{l}\int_\R\int_{\R^n}e^{-i(t\tau+x\cdot\xi)}A_\rho(x,t)\exp\left(-{\int_0^{+\infty} A_{\rho}(x+s\omega,t)\cdot\omega ds\over2}\right)dxdt\\
\ \\
=2\int_\R\int_{\omega^{\bot}}\left(1-\exp\left(-{\int_\R A_{\rho}(y+s_1\omega,t)\cdot\omega ds_1\over 2}\right)\right)e^{-i\xi\cdot y-it\tau}dydt.\end{array}\ee
Now let us introduce the Fourier transform $\mathcal F_{\R\times\omega^{\bot}}$ on $\R\times\omega^{\bot}$ defined by
\[\mathcal F_{\R\times\omega^{\bot}}f(\xi,\tau)=(2\pi)^{-{n\over2}}\int_\R\int_{\omega^{\bot}}f(y,t)e^{-it\tau-iy\cdot\xi}dydt,\quad f\in L^1(\omega^{\bot}\times\R), \ \ \tau\in\R,\ \ \xi\in\omega^{\bot}.\]
We fix
$$G_\rho:\omega^{\bot}\times\R\ni (y,t)\mapsto\left(1-\exp\left(-{\int_\R A_\rho(t,y+s_1\omega)\cdot\omega ds_1\over 2}\right)\right)$$
and we remark that for
$$R=\sup_{x\in\overline{\Omega}}|x|$$
we have supp$(G_\rho)\subset \{x\in\omega^\bot:\ |x|\leq R+1\}\times [-1,T+1]$. We fix also
$$G:\omega^{\bot}\times\R\ni (y,t)\mapsto\left(1-\exp\left(-{\int_\R A(y+s_1\omega,t)\cdot\omega ds_1\over 2}\right)\right).$$
Using this and applying   the mean value theorem and \eqref{a1b}, for a.e $(x',t)\in \omega^\bot\times\R$, we obtain
$$\begin{aligned}&|G_\rho(x',t)-G(x',t)|\\
&\leq \exp\left(\abs{\int_\R A(x'+s_1\omega,t)\cdot\omega ds_1}+\abs{\int_\R A_\rho(x'+s_1\omega,t)\cdot\omega ds}\right)\abs{\int_\R A_\rho(x'+s_1\omega,t)\cdot\omega ds_1-\int_\R A(x'+s_1\omega,t)\cdot\omega ds_1}\\
\ &\leq \exp\left[2(R+1)(\norm{ A}_{L^\infty(\R^{1+n})^n}+\norm{ A_\rho}_{L^\infty(\R^{1+n})^n})\right]\int_\R|A(x'+s_1\omega,t)-A_\rho(x'+s_1\omega,t)|ds_1\\
\ &\leq C\left(\int_\R|A(x'+s_1\omega,t)-A_\rho(x'+s_1\omega,t)|ds_1\right),\end{aligned}$$
with $C>0$ independent of $\rho$. Thus, integrating this expression with respect to $x'\in\omega^\bot$ and $t\in\R$ and applying the Fubini theorem, we obtain
$$\int_\R\int_{\omega^\bot}|G_\rho(x',t)-G(x',t)|dx'dt\leq C\int_\R\int_{\omega^\bot}\int_\R|A(y+s_1\omega,t)-A_\rho(y+s_1\omega,t)|ds_1dx'dt\leq C'\norm{A-A_\rho}_{L^1(\R^{1+n})}.$$
Then, applying \eqref{a1a} we get
$$\lim_{\rho\to+\infty}\norm{G-G_\rho}_{L^1(\R\times\omega^\bot)}=0.$$
Combining this with \eqref{t1j}-\eqref{t1l}, we find
$$\begin{aligned}2\int_\R\int_{\omega^\bot} G(x',t)e^{-it\tau-ix'\cdot\xi}dx'dt&=\lim_{\rho\to+\infty}2\int_\R\int_{\omega^\bot} G_\rho(x',t)e^{-it\tau-ix'\cdot\xi}dx'dt\\
\ &=\lim_{\rho\to+\infty}\int_\R\int_{\R^n}e^{-i(t\tau+x\cdot\xi)}A_\rho(x,t)\exp\left(-{\int_0^{+\infty} A_{\rho}(x+s\omega,t)\cdot\omega ds\over2}\right)dxdt\\
\ &=0.\end{aligned}$$
Allowing $\xi\in\omega^\bot$ and $\tau\in\R$ to be arbitrary, we deduce that $\mathcal F_{\R\times\omega^{\bot}}G=0$. Using the injectivity of $\mathcal F_{\R\times\omega^{\bot}}$, for a.e $(x',t)\in \omega^\bot\times\R$, we deduce that
$$\exp\left(-{\int_\R A(y+s_1\omega,t)\cdot\omega ds_1\over 2}\right)=1$$
and, using the fact that $A$ takes value in $\R^n$, we obtain
\bel{t5g}\int_\R A(x'+s_1\omega,t)\cdot\omega ds_1=0.\ee
We recall that, here $\omega$ can be arbitrary chosen. 

Now fixing $(\xi,\tau)\in\R^n\times\R$ with $\xi\neq0$, we deduce from \eqref{t5g}, that, for $\omega\in \xi^\bot\cap\mathbb S^{n-1}$, we have
$$\int_\R\int_{\omega^\bot}\int_\R A(x'+s_1\omega,t)\cdot\omega e^{-it\tau-ix'\cdot\xi}ds_1dx'dt=0.$$
Applying Fubini theorem and a change of variable, we get
$$\int_{\R^{1+n}}A(x,t)\cdot\omega e^{-it\tau-ix\cdot\xi}dxdt =\int_\R\int_{\omega^\bot}\int_\R A(x'+s_1\omega,t)\cdot\omega e^{-it\tau-ix'\cdot\xi}ds_1dx'dt=0.$$
 This proves that
\bel{ttt5b}\mathcal F(A)(\xi,\tau)\cdot\omega=0,\quad \tau\in\R,\ \xi\in\R^n\setminus\{0\},\ \omega\in\xi^\bot\cap\mathbb S^{n-1}.\ee
Let $j,k\in\{1,\ldots,n\}$ be such that $j\neq k$ and consider the set $\mathcal I_j:=\{\xi=(\xi_1,\ldots,\xi_n)\in\R^n:\ \xi_j\neq0\}$. Let $\xi\in \mathcal I_j$, $\tau\in\R$ and let 
$$\omega={\xi_ke_j-\xi_j e_k\over\sqrt{\xi_j^2+\xi_k^2}},$$
with $e_j=(0,\ldots,0,\underbrace{1}_{\textrm{position\ } j},0,\ldots0)$, $e_k=(0,\ldots,0,\underbrace{1}_{\textrm{position } k},0,\ldots0)$. Then, for $A=(a_1,\ldots,a_n)$ we have
$$\mathcal F(\partial_{x_k}a_j-\partial_{x_j}a_k)(\xi,\tau)=  i\sqrt{\xi_j^2+\xi_k^2}\mathcal F(A)(\xi,\tau)\cdot\omega.$$
Thus, condition \eqref{ttt5b} implies that
$$\mathcal F(\partial_{x_k}a_j-\partial_{x_j}a_k)(\xi,\tau)=0,\quad \xi\in \mathcal I_j,\ \tau\in\R.$$
In the same way, we prove that
$$\mathcal F(\partial_{x_k}a_j-\partial_{x_j}a_k)(\xi,\tau)=0,\quad \xi\in \mathcal I_k,\ \tau\in\R$$
and it is clear that
$$\mathcal F(\partial_{x_k}a_j-\partial_{x_j}a_k)(\xi,\tau)=i(\xi_k\mathcal F(a_j)(\xi,\tau)-\xi_j\mathcal F(a_k)(\xi,\tau))=0,\quad \xi\in \R^n\setminus(\mathcal I_k\cup\mathcal I_j),\ \tau\in\R.$$
Therefore, we have $\mathcal F(\partial_{x_k}a_j-\partial_{x_j}a_k)=0$ which implies $\partial_{x_k}a_j-\partial_{x_j}a_k=0$ and by the same way that $dA=0$. This proves \eqref{t5b}.

\subsection{Recovery of the zero order coefficients}
In this subsection we assume that \eqref{t5b}-\eqref{tttt5b} are fulfilled. Our goal is to prove that  \eqref{t5a}  implies \eqref{t5c}. In this subsection, we denote by $A$, $B$ and $q$ the functions $A_1-A_2$, $B_1-B_2$ and $q_1-q_2$ extended by zeo to $\R^{1+n}$. We start, with the following intermediate result.
\begin{lem} \label{ll4} Let $\mathcal A\in  L^\infty(\R^{1+n})^n$ be compactly supported and  assume that $d\mathcal A=0$ in the sense of distributions taking value in 2-forms. Then, for
\bel{l4aa}\phi(x,t):=-\int_0^1\frac{\mathcal A(sx,t)\cdot x}{2}ds,\quad (x,t)\in\R^n\times\R,\ee
 we have $\phi \in L^\infty(\R_t; W^{1,\infty}(\R_x^n))$ and $\nabla_x \phi=-\frac{\mathcal A}{2}$.
\end{lem}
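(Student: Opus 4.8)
The statement is the standard Poincar\'e-lemma-with-parameter computation, and the plan is to verify it by differentiating the explicit formula \eqref{l4aa} under the integral sign. First I would record the regularity claim: since $\mathcal A\in L^\infty(\R^{1+n})^n$ is compactly supported, the map $(x,t)\mapsto \int_0^1 \mathcal A(sx,t)\cdot x\,ds$ is bounded (the integrand is bounded by $\norm{\mathcal A}_{L^\infty}|x|$, and $x$ ranges over a bounded set once we restrict to the support in the $x$-variable, which is legitimate because $\phi$ will only be used on $\Omega$), and it is Lipschitz in $x$ for a.e.\ fixed $t$ because $\mathcal A$ is; so $\phi\in L^\infty(\R_t;W^{1,\infty}(\R^n_x))$ and $\nabla_x\phi$ exists a.e.\ and may be computed by differentiating inside the integral.

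The core computation is then: for each index $k$,
\[
\partial_{x_k}\phi(x,t)=-\frac12\int_0^1\Bigl(s\,(\partial_{x_k}\mathcal A)(sx,t)\cdot x + \mathcal A_k(sx,t)\Bigr)ds,
\]
where $\mathcal A=(\mathcal A_1,\dots,\mathcal A_n)$ and $(\partial_{x_k}\mathcal A)(sx,t)\cdot x=\sum_{j=1}^n \partial_{x_k}\mathcal A_j(sx,t)\,x_j$. The hypothesis $d\mathcal A=0$ means $\partial_{x_k}\mathcal A_j=\partial_{x_j}\mathcal A_k$ in the sense of distributions, so $\sum_j \partial_{x_k}\mathcal A_j(sx,t)x_j = \sum_j \partial_{x_j}\mathcal A_k(sx,t)x_j$, and the integrand becomes $-\frac12\bigl(s\sum_j \partial_{x_j}\mathcal A_k(sx,t)x_j + \mathcal A_k(sx,t)\bigr)$. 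Recognizing that $\frac{d}{ds}\bigl(s\,\mathcal A_k(sx,t)\bigr)=\mathcal A_k(sx,t)+s\sum_j x_j\,\partial_{x_j}\mathcal A_k(sx,t)$, we get $\partial_{x_k}\phi(x,t)=-\frac12\int_0^1 \frac{d}{ds}\bigl(s\,\mathcal A_k(sx,t)\bigr)ds=-\frac12\mathcal A_k(x,t)$, which is exactly $\nabla_x\phi=-\mathcal A/2$.

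The main obstacle is purely one of rigor rather than ideas: $d\mathcal A=0$ only holds distributionally and $\mathcal A$ is merely $L^\infty$, so none of the pointwise manipulations above (differentiating under the integral, the chain rule $\frac{d}{ds}\mathcal A_k(sx,t)$, the fundamental theorem of calculus in $s$) are literally legitimate as stated. The clean way around this is to mollify: replace $\mathcal A$ by $\mathcal A_\delta=\mathcal A*\eta_\delta$ with $\eta_\delta$ a spatial (or space-time) mollifier, note $d\mathcal A_\delta=0$ pointwise and $\mathcal A_\delta\to\mathcal A$ in $L^1_{loc}$ and weak-$*$ in $L^\infty$, carry out the smooth computation to get $\nabla_x\phi_\delta=-\mathcal A_\delta/2$ with $\phi_\delta$ defined by \eqref{l4aa} applied to $\mathcal A_\delta$, and then pass to the limit: $\phi_\delta\to\phi$ in $L^1_{loc}$ by dominated convergence using the uniform bound $|\mathcal A_\delta|\leq\norm{\mathcal A}_{L^\infty}$ on a fixed neighborhood of $\mathrm{supp}(\mathcal A)$, while $\nabla_x\phi_\delta=-\mathcal A_\delta/2\to-\mathcal A/2$ in $L^1_{loc}$, so the identity $\nabla_x\phi=-\mathcal A/2$ holds in $\mathcal D'$ and hence a.e., and the $W^{1,\infty}$ bound is inherited from the uniform bounds on $\phi_\delta$ and $\nabla_x\phi_\delta$. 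I would present the mollification step briefly and then do the one-line smooth computation above.
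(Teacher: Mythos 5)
The paper does not actually prove Lemma \ref{ll4}; it only refers to \cite[Lemma 4.2]{Ki5}, so there is no internal proof to compare with. Your argument — mollify $\mathcal A$, note $d\mathcal A_\delta=0$ pointwise, run the classical homotopy computation $\partial_{x_k}\phi_\delta=-\tfrac12\int_0^1\frac{d}{ds}\bigl(s\,\mathcal A_{\delta,k}(sx,t)\bigr)ds=-\tfrac12\mathcal A_{\delta,k}(x,t)$, and pass to the limit in $\mathcal D'$ — is the standard proof of this statement and is correct; the limit passage works as you say (for the $L^1_{loc}$ convergence of $\phi_\delta$ apply Fubini and dominated convergence in the $s$-variable, using the uniform bound $\norm{\mathcal A_\delta}_{L^\infty}\leq\norm{\mathcal A}_{L^\infty}$ and the change of variables $y=sx$ for each fixed $s>0$), and the $W^{1,\infty}$ bound then follows from the uniform Lipschitz constant $\tfrac12\norm{\mathcal A}_{L^\infty}$ of the $\phi_\delta(\cdot,t)$.

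Two points in your opening paragraph should be corrected, although neither affects the mollification argument that carries the proof. First, the claim that $\phi(\cdot,t)$ is Lipschitz ``because $\mathcal A$ is'' is unjustified: $\mathcal A$ is merely $L^\infty$, and the Lipschitz property of $\phi(\cdot,t)$ is exactly what the identity $\nabla_x\phi=-\mathcal A/2$, obtained at the end, delivers — it is a conclusion, not an input. Second, the lemma asserts $\phi\in L^\infty(\R_t;W^{1,\infty}(\R_x^n))$ on all of $\R^n\times\R$, so the boundedness should not be argued by restricting attention to $\Omega$; it holds globally because $\mathcal A$ is compactly supported, say in $\{|x|\leq R\}\times\R$: for $|x|>R$ the integrand in \eqref{l4aa} vanishes unless $s\leq R/|x|$, which gives $|\phi(x,t)|\leq\tfrac12 R\norm{\mathcal A}_{L^\infty}$ for every $(x,t)$. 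With these repairs your write-up is a complete and correct proof.
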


We refer to \cite[Lemma 4.2]{Ki5} for the proof of this result. From now on we fix $\phi\in L^\infty_{loc}(\R^{n+1})$ given by \eqref{l4aa}, with $\mathcal A=A$, and applying Lemma \ref{ll4} we deduce that $\nabla_x\phi=-\frac{A}{2}$ and  $\phi \in L^\infty(\R_t; W^{1,\infty}(\R_x^n))$.	
  Moreover, since $A\in W^{1,\infty}(0,T; L^{p_1}(\Omega))^n$,  by the Sobolev embedding theorem, we deduce that for any open bounded set $\tilde{\Omega}\subset \R^n$ we have
$$\phi\in W^{1,\infty}(0,T; W^{1,p_1}(\tilde{\Omega}))\subset W^{1,\infty}(0,T; L^{\infty}(\tilde{\Omega})).$$
 Thus, we have  $\phi\in L^\infty(0,T;W^{1,\infty}(\R^n))\cap W^{1,\infty}(0,T; L^{\infty}_{loc}(\R^n))$. Since $\R^n\setminus\Omega$ is connected and $A=0$ on $[\R^n\setminus\Omega]\times(0,T)$, there exists a function $h\in W^{1,\infty}(0,T)$ such that 
$$\phi(x,t)=h(t),\quad (x,t)\in (\R^n\setminus\Omega)\times(0,T).$$
Therefore, by replacing $\phi(x,t)$ with $\phi(x,t)-h(t)$, we may assume without lost of generality that $\phi=0$ on $(\R^n\setminus\Omega)\times(0,T)$. In particular, we have $\phi_{|\Sigma}=0$.
Therefore, we can apply the gauge invariance of the DN map  to get
$$\Lambda_{A_1,B_1,q_1}=\Lambda_{A_1+2\nabla_x\phi,B_1+\nabla_x\phi,q_1-\pd_t\phi-|\nabla_x\phi|^2-A_1\cdot\nabla_x\phi}=\Lambda_{A_2,B_1+\nabla_x\phi,q_1-\pd_t\phi+\Delta_x\phi-|\nabla_x\phi|^2-A_1\cdot\nabla_x\phi}.$$
Then, condition \eqref{t5a} implies that
\bel{DN1}\Lambda_{A_2,B_1+\nabla_x\phi,q_1-\pd_t\phi-|\nabla_x\phi|^2-A_1\cdot\nabla_x\phi}=\Lambda_{A_2,B_2,q_2}.\ee
 We will prove that this condition implies
\bel{tt5e} \nabla_x\cdot B_2+q_2=\nabla_x\cdot( B_1+\nabla_x\phi)+q_1-\pd_t\phi-|\nabla_x\phi|^2-A_1\cdot\nabla_x\phi.\ee
For this purpose we fix a solution $u_1\in L^2(0,T;H^1(\Omega))$ of \eqref{Gsol1}, with $(A_1, B_1, q_1)$ replaced by $( A_2,B_1+\nabla_x\phi,q_1-\pd_t\phi-|\nabla_x\phi|^2-A_1\cdot\nabla_x\phi)$, of the form \eqref{GO12} and a solution $u_2\in L^2(0,T;H^1(\Omega))$ of \eqref{Gsol2} given by \eqref{GO12}, with $w_{j,\rho}$, $j=1,2$, satisfying the decay property \eqref{CGO13}-\eqref{CGO14}. 
  In light of \eqref{p3a}, we have
\bel{tt5f}\int_Q(B_1+\nabla_x\phi-B_2)\cdot\nabla_x(u_1u_2)dxdt+\int_Q(q_1-\pd_t\phi-|\nabla_x\phi|^2-A_1\cdot\nabla_x\phi-q_2)u_1u_2dxdt=0.\ee
For the first term on left hand side of \eqref{tt5f}, applying \eqref{ttt5a}-\eqref{tttt5b} and the Green formula, we get 
$$\begin{aligned}&\int_Q(B_1+\nabla_x\phi-B_2)\cdot\nabla_x(u_1u_2)dxdt\\
&=\int_Q(B_1-\frac{A}{2}-B_2)\cdot\nabla_x(u_1u_2)dxdt\\
&=-\int_Q\nabla_x\cdot(B_1-\frac{A}{2}-B_2)u_1u_2dxdt+\left\langle [(B_1-B_2)\cdot\nu-\frac{(A_1-A_2)\cdot\nu}{2}]u_1,u_2\right\rangle_{L^2(0,T;H^{-\frac{1}{2}}(\partial\Omega)),L^2(0,T;H^{\frac{1}{2}}(\partial\Omega))}\\
\ &=-\int_Q\nabla_x\cdot(B_1-\frac{A}{2}-B_2)b_{1,\rho}b_{2,\rho}dxdt-\int_QZ_\rho dxdt\\
\ &=\int_Q(B_1+\nabla_x\phi-B_2)\cdot \nabla_x(b_{1,\rho}b_{2,\rho})dxdt-\int_QZ_\rho dxdt,\end{aligned}$$
with $Z_\rho=\nabla_x\cdot(B-\frac{A}{2})(b_{1,\rho}w_{2,\rho}+b_{2,\rho}w_{1,\rho}+w_{1,\rho}w_{2,\rho})$.   In view of \eqref{CGO13}, it is clear that
$$\lim_{\rho\to+\infty}\int_QZ_\rho dxdt=0.$$
Moreover, one can easily check that 
$$\begin{aligned}&\int_Q(B_1+\nabla_x\phi-B_2)\cdot \nabla_x(b_{1,\rho}b_{2,\rho})dxdt\\
&=-i\left[\int_Q\left(1-e^{-\rho^{1\over3}t}\right) \left(1-e^{-\rho^{1\over3}(T-t)}\right)e^{-i(t\tau+x\cdot\xi)}(B_1+\nabla_x\phi-B_2)(x,t)dxdt\right]\cdot \xi.\end{aligned}$$
Sending $\rho\to+\infty$ and applying the Lebesgue dominate convergence  theorem, we find
$$\begin{aligned}\lim_{\rho\to+\infty}\int_Q(B+\nabla_x\phi)\cdot \nabla_x(b_{1,\rho}b_{2,\rho})dxdt&=(2\pi)^{\frac{n+1}{2}}[\mathcal F(B+\nabla_x\phi)(\xi,\tau)]\cdot(-i\xi)\\
\ &=(2\pi)^{\frac{n+1}{2}}\mathcal F[\nabla_x\cdot(B+\nabla_x\phi)](\xi,\tau)\end{aligned}$$
Therefore, we have
$$\lim_{\rho\to+\infty}\int_Q(B+\nabla_x\phi)\cdot\nabla_x(u_1u_2)dxdt=(2\pi)^{\frac{n+1}{2}}\mathcal F[\nabla_x\cdot(B+\nabla_x\phi)](\xi,\tau).$$
In the same way, we can prove that
$$\lim_{\rho\to+\infty}\int_Q(q_1-\pd_t\phi-|\nabla_x\phi|^2-A_1\cdot\nabla_x\phi-q_2)u_1u_2dxdt=(2\pi)^{\frac{n+1}{2}}\mathcal F[(q-\pd_t\phi-|\nabla_x\phi|^2-A_1\cdot\nabla_x\phi)](\xi,\tau).$$
Combining this with \eqref{tt5f},  we obtain
$$\mathcal F[\nabla_x\cdot(B+\nabla_x\phi)+q-\pd_t\phi-|\nabla_x\phi|^2-A_1\cdot\nabla_x\phi)](\xi,\tau)=0,\quad (\xi,\tau)\in\R^{1+n}.$$
This proves \eqref{tt5e} and the proof of \eqref{t5c} is completed.

\subsection{Proof of Corollary \ref{c1}}
This subsection is devoted to the proof of Corollary \ref{c1}. For this purpose, we assume that $\Lambda_{A_1,B,q}=\Lambda_{A_2,B,q}$. Then Theorem \ref{t5} implies that there exists $\phi\in W^{1,\infty}(Q))$ such that
$$\left\{\begin{array}{ll}A_2=A_1+2\nabla_x\phi,\quad &\textrm{in}\ Q,\\  \nabla_x\cdot B+q=\nabla_x\cdot( B+\nabla_x\phi)+q-\pd_t\phi-|\nabla_x\phi|^2-A_1\cdot\nabla_x\phi,\quad &\textrm{in}\ Q,\\ \phi=0,\quad &\textrm{on}\ \Sigma.\end{array}\right.$$
Thus, fixing $A_3=A_1+\nabla_x\phi\in L^\infty(Q)$, we deduce that $\phi$ satisfies
$$\left\{\begin{array}{ll} \pd_t\phi-\Delta_x\phi+A_3\cdot\nabla_x\phi=0,\quad &\textrm{in}\ Q, \\ \phi=0,\quad &\textrm{on}\ \Sigma.\end{array}\right.$$
Note that since $\phi\in L^\infty(0,T;W^{1,\infty}(\Omega))$ and $\Delta \phi= \frac{\nabla_x\cdot(A_2-A_1)}{2}\in L^\infty(Q)$ we can define $\pd_\nu \phi$ as an element of $L^\infty(0,T;H^{-\frac{1}{2}}(\pd\Omega))$. Moreover, the conditions $\phi_{|\Sigma}=0$ and $(A_2-A_1)\cdot\nu_{|\Sigma}=0$ imply that  $\phi_{|\Sigma}=\pd_\nu \phi_{|\Sigma}=0$. Thus, fixing $\mathcal O$ a set with not empty interior such that $\tilde{\Omega}=\mathcal O\cup\Omega$ is an open bounded connected set of $\R^n$ with Lipschitz boundary, we can see that $\phi$ extended by zero to $\tilde{\Omega}\times (0,T)$ solves
$$\left\{\begin{array}{ll} \pd_t\phi-\Delta_x\phi+A_3\cdot\nabla_x\phi=0,\quad &\textrm{in}\ \tilde{\Omega}\times(0,T), \\ \phi=0,\quad &\textrm{on}\ \mathcal O\times(0,T),\end{array}\right.$$
with $A_3$ extended by zero to $\tilde{\Omega}\times (0,T)$. Then the unique continuation properties for parabolic equations (e.g.  \cite[Theorem 1.1]{SS}) implies that $\phi=0$. Note that such results of unique continuation are stated for solutions of parabolic equations lying $H^1(0,T;L^2(\Omega))\cap L^2(0,T;H^2(\Omega))$, but since they follow from  Carleman estimates  like \cite[Theorem 1.2]{SS}, they can be extended to solutions lying in $H^1(Q)$ and we can apply this result to $\phi$. This proves that $A_1=A_2$ and  the proof of Corollary \ref{c1} is completed.

\subsection{Proof of Corollary \ref{c11}}
In this section we will prove   Corollary \ref{c11}. For this purpose, we first recall that $\tilde{\Lambda}_{A_j}=\Lambda_{A_j,\frac{A_j}{2},\frac{\nabla_x\cdot(A_j)}{2}}$, $j=1,2$. Therefore, Theorem \ref{t5} implies that there exists $\phi\in W^{1,\infty}(Q)$ such that
$$\left\{\begin{array}{ll}A_2=A_1+2\nabla_x\phi,\quad &\textrm{in}\ Q,\\  \nabla_x\cdot(A_2)=\nabla_x\cdot(A_1)-\pd_t\phi+\Delta_x\phi-|\nabla_x\phi|^2-A_1\cdot\nabla_x\phi,\quad &\textrm{in}\ Q,\\ \phi=0,\quad &\textrm{on}\ \Sigma.\end{array}\right.$$
Then, fixing $A_3=-A_1-\nabla_x\phi\in L^\infty(Q)$ and applying the fact that $(A_2-A_1)\cdot\nu_{|\Sigma}=0$, we deduce that $\phi$ satisfies
$$\left\{\begin{array}{ll} -\pd_t\phi-\Delta_x\phi+A_3\cdot\nabla_x\phi=0,\quad &\textrm{in}\ Q, \\ \phi=\pd_\nu \phi=0,\quad &\textrm{on}\ \Sigma.\end{array}\right.$$
Therefore, applying again the unique continuation properties for parabolic equations we deduce that $\phi=0$ and  the proof of Corollary \ref{c11} is completed.

\subsection{Proof of Corollary \ref{c3}}

In this subsection we will show Corollary \ref{c3}.  Let us first consider the following intermediate result.
\begin{lem}\label{llll1} Let $\Omega$ be a bounded open set of $\R^n$ with Lipschitz boundary. Then, for every $F\in L^2(Q)$, the problem
\bel{llll1a}\left\{\begin{array}{ll}-\partial_tv-\Delta_x v=F,\quad &\textrm{in}\ Q,\\  v(\cdot,T)=0,\quad &\textrm{in}\ \Omega,\\ v=0,\quad &\textrm{on}\ \Sigma\end{array}\right.\ee
admits a unique solution $v\in H^1(0,T;L^2(\Omega))\cap L^2(0,T;H^1(\Omega))$.\end{lem}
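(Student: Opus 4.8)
The plan is to reduce \eqref{llll1a} to a \emph{forward} parabolic initial boundary value problem by reversing time, to invoke the weak well-posedness theory already used in Section~2, and then to upgrade the time regularity of the solution by an energy estimate. Set $w(x,t):=v(x,T-t)$ and $\tilde F(x,t):=F(x,T-t)\in L^2(Q)$. A direct computation shows that $v$ solves \eqref{llll1a} in $H^1(0,T;L^2(\Omega))\cap L^2(0,T;H^1(\Omega))$ if and only if $w$ lies in the same space and solves
\[\left\{\begin{array}{ll}\partial_tw-\Delta_x w=\tilde F,\quad &\textrm{in}\ Q,\\ w(\cdot,0)=0,\quad &\textrm{in}\ \Omega,\\ w=0,\quad &\textrm{on}\ \Sigma.\end{array}\right.\]
Applying \cite[Theorem 4.1, Chapter 3]{LM1} with $H=L^2(\Omega)$, $V=H^1_0(\Omega)$ and the coercive sesquilinear form $a(h,g)=\int_\Omega\nabla_x h\cdot\overline{\nabla_x g}\,dx$ (exactly as in the proof of Proposition \ref{p2}), and using $\tilde F\in L^2(Q)\subset L^2(0,T;H^{-1}(\Omega))$, I obtain a unique weak solution $w\in H^1(0,T;H^{-1}(\Omega))\cap L^2(0,T;H^1_0(\Omega))$. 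It remains to show that the stronger integrability of the source forces $\partial_t w\in L^2(Q)$.

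To this end I would use the Galerkin method in the orthonormal basis $\{\varphi_k\}_{k\geq1}$ of $L^2(\Omega)$ consisting of the eigenfunctions of the Dirichlet Laplacian on $\Omega$ (available on any bounded Lipschitz domain), and denote by $w_m(t)=\sum_{k=1}^m c_k^m(t)\varphi_k$ the solution of the corresponding finite-dimensional system with $w_m(0)=0$. Multiplying the $k$-th projected equation by $(c_k^m)'(t)$, summing over $k=1,\dots,m$, integrating in time, and using $\nabla_x w_m(\cdot,0)=0$, yields
\[\int_0^t\norm{\partial_s w_m(\cdot,s)}_{L^2(\Omega)}^2\,ds+\norm{\nabla_x w_m(\cdot,t)}_{L^2(\Omega)}^2\leq\int_0^T\norm{\tilde F(\cdot,s)}_{L^2(\Omega)}^2\,ds,\qquad t\in[0,T].\]
Together with the elementary energy estimate obtained by testing against $c_k^m(t)$, this bounds $(w_m)$ uniformly in $H^1(0,T;L^2(\Omega))\cap L^2(0,T;H^1_0(\Omega))$. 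Extracting a weakly convergent subsequence and identifying its limit, through uniqueness of the weak solution, with the function $w$ constructed above, I conclude $w\in H^1(0,T;L^2(\Omega))\cap L^2(0,T;H^1_0(\Omega))$; in particular $\Delta_x w=\partial_t w-\tilde F\in L^2(Q)$. Uniqueness for \eqref{llll1a} reduces, after the same time reversal, to the observation that the basic energy identity with $\tilde F=0$ gives $w\equiv0$.

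Undoing the substitution, $v(x,t):=w(x,T-t)$ belongs to $H^1(0,T;L^2(\Omega))\cap L^2(0,T;H^1(\Omega))$, solves $-\partial_tv-\Delta_x v=F$ in $Q$ with $v(\cdot,T)=w(\cdot,0)=0$ and $v_{|\Sigma}=0$, and is the unique such function, which proves the lemma. I expect the only genuinely delicate point to be the derivation of the $H^1(0,T;L^2(\Omega))$ bound on a mere Lipschitz domain, where $H^2$ elliptic regularity is not available: this is precisely why the argument is carried out at the level of the Galerkin approximation with the test function $(c_k^m)'(t)$, whose use only invokes the Dirichlet form $\int_\Omega|\nabla_x w_m|^2\,dx$ and is therefore insensitive to the regularity of $\partial\Omega$.
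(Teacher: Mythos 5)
Your proof is correct, but the key regularity step is carried out differently from the paper. The paper works directly with the backward problem: after obtaining the weak solution from \cite[Theorem 4.1, Chapter 3]{LM1}, it expands $v$ in the Dirichlet eigenbasis, solves the resulting ODEs explicitly by Duhamel's formula, and uses Young's convolution inequality to get $\norm{v_n}_{L^2(0,T)}\leq \lambda_n^{-1}\norm{F_n}_{L^2(0,T)}$, which yields $v\in L^2(0,T;D(-\Delta))$ and then $\partial_t v=-\Delta v-F\in L^2(Q)$. You instead reverse time and run the classical improved-regularity argument: a Galerkin scheme in the same eigenbasis, tested against $\partial_t w_m$, giving the uniform bound on $\int_0^t\norm{\partial_s w_m}_{L^2(\Omega)}^2ds+\norm{\nabla_x w_m(\cdot,t)}_{L^2(\Omega)}^2$ and hence $w\in H^1(0,T;L^2(\Omega))\cap L^\infty(0,T;H^1_0(\Omega))$ after passing to the limit and invoking uniqueness of the weak solution. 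Both routes deliberately avoid $H^2$ elliptic regularity, which is exactly the issue on a Lipschitz domain; your energy method additionally gives the $L^\infty(0,T;H^1_0(\Omega))$ bound and needs no explicit solution formula, while the paper's spectral computation is shorter once the eigenbasis is fixed and directly quantifies $\Delta_x v\in L^2(Q)$ through the bound $\sum_n\lambda_n^2\norm{v_n}_{L^2(0,T)}^2\leq\norm{F}_{L^2(Q)}^2$. The remaining steps you sketch (existence of the Galerkin approximations, identification of the weak limit, uniqueness by the basic energy identity, and undoing the time reversal) are standard and correctly handled, so the argument is complete.
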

\begin{proof} This result is classical but we prove it for sake of completeness. Applying \cite[Theorem 4.1, Chapter 3]{LM1} we know that \eqref{llll1a} admits a unique solution  $v\in L^2(0,T;H^1(\Omega))\cap H^1(0,T;H^{-1}(\Omega))$. So the proof of the lemma will be completed if we show that $v\in H^1(0,T;L^2(\Omega))$. Let $(\lambda_n)_{n\geq1}$ be the non-decreasing sequence of eigenvalues for the operator $H=-\Delta$ with Dirichlet boundary condition and $(\phi_n)_{n\geq1}$ an associated orthonormal basis of  eigenfunctions. We will prove that actually $v\in L^2(0,T;D(H))$ which will complete the proof of the lemma. We fix $v_n(t)= \left\langle v(\cdot,t),\phi_n\right\rangle_{L^2(\Omega)}$, $F_n(t)= \left\langle F(\cdot,t),\phi_n\right\rangle_{L^2(\Omega)}$ and we remark  that $v_n$ solves 
$$\left\{\begin{array}{l}-\partial_tv_n+\lambda_n v=F_n,\\  v_n(T)=0.\end{array}\right.$$
Therefore, we have
$$v_n(t)=\int_0^{T-t}e^{-\lambda_n(T-t-s)}F_n(s)ds=(e^{-\lambda_nt}\mathds{1}_{(0,+\infty)})*(F_n\mathds{1}_{(0,T)})(T-t),$$
with $*$ the convolution product and, for any interval $I$, $\mathds{1}_I$ the characteristic function of $I$. An application of Young inequality yields
$$\norm{v_n}_{L^2(0,T)}\leq \left(\int_0^\infty e^{-\lambda_nt}dt\right)\norm{F_n}_{L^2(0,T)}\leq \frac{\norm{F_n}_{L^2(0,T)}}{\lambda_n}.$$
Thus, we have
$$\begin{aligned}\int_0^T\left(\sum_{n=1}^\infty \lambda_n^2|v_n(t)|^2\right)dt&=\sum_{n=1}^\infty \left(\int_0^T\lambda_n^2|v_n(t)|^2\right)dt\\
\ &\leq \sum_{n=1}^\infty \left(\int_0^T|F_n(t)|^2\right)dt=\int_0^T\left(\sum_{n=1}^\infty |F_n(t)|^2\right)dt=\norm{F}_{L^2(Q)}^2.\end{aligned}$$
This proves that $v=\underset{n\in\mathbb N}{\sum} v_n\phi_n\in L^2(0,T;D(H))$ and using the fact that $\partial_tv=Hv-F$ we deduce that $v\in H^1(0,T;L^2(\Omega))$.
This  completes the proof of the lemma.\end{proof}

Let us observe that for $\Omega$ a $C^{1,1}$ bounded domain, by the elliptic regularity, the result of Lemma \ref{llll1} would correspond to existence of a strong solution $v\in L^2(0,T;H^2(\Omega))\cap H^1(0,T;L^2(\Omega))$ of \eqref{llll1a}.  However, we do not want to assume such regularity for $\partial \Omega$. 

From now on, we assume that  the conditions of Corollary \ref{c3} are fulfilled and, for $A,B\in L^\infty(Q)^n$ satisfying $\nabla_x\cdot A,\nabla_x\cdot B\in L^\infty(Q)$  and $q\in L^\infty(0,T;L^{p_1}(\Omega))$, we consider the following spaces
$$S_{+,A,B,q}:=\{u\in L^2(0,T;H^1(\Omega)):\ \partial_tu-\Delta u+A\cdot\nabla_xu+\nabla_x\cdot Bu+qu=0,\  u_{|\Omega^0}=0\},$$
$$S_{-,A,B,q}:=\{u\in L^2(0,T;H^1(\Omega)):\ -\partial_tu-\Delta u-A\cdot\nabla_xu+(q+\nabla_x\cdot (B-A))u=0,\ u_{|\Omega^T}=0\},$$
$$S_{+,A,B,q,\gamma_1}:=\{u\in S_{+,A,B,q}:\  \textrm{supp}(u_{|\Sigma})\subset [0,T]\times \gamma_1\},$$
$$S_{-,A,B,q,\gamma_2}:=\{u\in S_{-,A,B,q}:\  \textrm{supp}(u_{|\Sigma})\subset [0,T]\times \gamma_2\}.$$
Fixing $Q_1:=(\Omega\setminus\overline{\Omega_*})\times(0,T)$, we can consider the following density result.

\begin{lem}\label{lll4} Assume that $\nabla_x\cdot (B), \nabla_x\cdot (A)\in L^\infty(0,T; L^{p_1}(\Omega))$. Then the space $S_{+,A,B,q,\gamma_1}$ $($resp. $S_{-,A,B,q,\gamma_2}$$)$ is dense in the space $S_{+,A,B,q}$ $($resp. $S_{-,A,B,q}$$)$ with respect to the norm $L^2(Q_1)$.\end{lem}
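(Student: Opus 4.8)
The statement is a Runge-type approximation result, and the standard route is a Hahn–Banach / duality argument combined with the unique continuation property for parabolic equations on the exterior region $Q_1=(\Omega\setminus\overline{\Omega_*})\times(0,T)$. I will prove the density of $S_{+,A,B,q,\gamma_1}$ in $S_{+,A,B,q}$ with respect to the $L^2(Q_1)$ norm; the statement for $S_{-,A,B,q,\gamma_2}$ follows by the same argument applied to the backward parabolic equation. First I observe that $S_{+,A,B,q,\gamma_1}$ restricted to $Q_1$ is a subspace of $L^2(Q_1)$; to show its closure is all of $\{u_{|Q_1}:\ u\in S_{+,A,B,q}\}$ it suffices, by Hahn–Banach, to take $f\in L^2(Q_1)$ such that $\int_{Q_1} u f\,dx\,dt=0$ for every $u\in S_{+,A,B,q,\gamma_1}$, and to prove that then $\int_{Q_1} u f\,dx\,dt=0$ for every $u\in S_{+,A,B,q}$. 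I extend $f$ by zero to all of $Q$ and still call it $f$, so that $f$ is supported in $\overline{Q_1}$.

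Next I introduce the adjoint state. Given such an $f\in L^2(Q)$, I solve the (backward) adjoint boundary value problem
\begin{equation*}
\left\{\begin{array}{ll}-\partial_t v-\Delta_x v-A\cdot\nabla_x v+(q+\nabla_x\cdot(B-A))v=f,\quad &\textrm{in}\ Q,\\ v(\cdot,T)=0,\quad &\textrm{in}\ \Omega,\\ v=0,\quad &\textrm{on}\ \Sigma,\end{array}\right.
\end{equation*}
which admits a unique solution $v\in H^1(0,T;H^{-1}(\Omega))\cap L^2(0,T;H^1(\Omega))$ by the same well-posedness argument as in Proposition \ref{p2} (and, when $f\in L^2(Q)$, one even gets the improved regularity of Lemma \ref{llll1} after absorbing the lower order terms). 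For any $u\in S_{+,A,B,q}$ with $u_{|\Omega^0}=0$, an integration by parts together with the equation solved by $u$ gives
\begin{equation*}
\int_Q u f\,dx\,dt = \int_\Sigma u\,\partial_\nu v\,d\sigma(x)\,dt,
\end{equation*}
where the boundary term is interpreted in the appropriate duality (this is exactly the integration by parts underlying Proposition \ref{p3}, with no interior term surviving because $u$ solves the homogeneous equation and $v=0$ on $\Sigma$, $v(\cdot,T)=0$, $u(\cdot,0)=0$). Hence the vanishing of $\int_{Q_1}uf$ for all $u\in S_{+,A,B,q,\gamma_1}$ translates into $\int_\Sigma u\,\partial_\nu v\,d\sigma(x)\,dt=0$ for all such $u$; letting the boundary data $u_{|\Sigma}$ range over a dense subset of traces supported in $\gamma_1\times[0,T]$ forces $\partial_\nu v=0$ on $\gamma_1\times(0,T)$. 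Combined with $v=0$ on $\Sigma$, we get $v=\partial_\nu v=0$ on $\gamma_1\times(0,T)$.

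Now comes the key point. On $Q_1$ we have $f=0$ by construction, so $v$ solves the homogeneous parabolic equation $-\partial_t v-\Delta_x v-A\cdot\nabla_x v+(q+\nabla_x\cdot(B-A))v=0$ in $Q_1$, together with the vanishing Cauchy data $v=\partial_\nu v=0$ on $\gamma_1\times(0,T)\subset\partial\Omega\times(0,T)\subset\partial Q_1$. Since $\Omega\setminus\overline{\Omega_*}$ is open with $\partial\Omega\subset\partial\Omega_*$, one can argue exactly as in the proof of Corollary \ref{c1}: extend $Q_1$ slightly across $\gamma_1$ by a set with nonempty interior, extend $v$ and the coefficients by zero, and invoke the unique continuation property for parabolic equations (\cite[Theorem 1.1 and Theorem 1.2]{SS}) to conclude $v\equiv 0$ on $Q_1$, hence on the connected component of $Q$ meeting $\gamma_1$; here one uses the coefficient regularity hypothesis $\nabla_x\cdot A,\nabla_x\cdot B\in L^\infty(0,T;L^{p_1}(\Omega))$, $q\in L^\infty(0,T;L^{p_1}(\Omega))$, $A,B\in L^\infty(Q)^n$, which is what makes the lower order terms admissible in the Carleman-estimate–based unique continuation. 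In particular $\partial_\nu v=0$ on \emph{all} of $\Sigma$, and feeding this back into the identity $\int_Q uf\,dx\,dt=\int_\Sigma u\,\partial_\nu v\,d\sigma(x)\,dt=0$ now for arbitrary $u\in S_{+,A,B,q}$ yields $\int_{Q_1}uf\,dx\,dt=0$ for all $u\in S_{+,A,B,q}$, which is the desired conclusion. The same scheme, with the forward problem $\partial_t v-\Delta_x v+A\cdot\nabla_x v+\nabla_x\cdot(B)v+qv=f$, $v(\cdot,0)=0$, handles $S_{-,A,B,q,\gamma_2}$ inside $S_{-,A,B,q}$.

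The main obstacle is the unique continuation step: one must be careful that $v$ has only $H^1$ regularity in $Q_1$ rather than the $H^{2,1}$ regularity in which parabolic unique continuation is classically stated, so I would invoke it in the form used already in the proof of Corollary \ref{c1}, namely that the Carleman-estimate proof of \cite[Theorem 1.2]{SS} applies verbatim to $H^1$ solutions; and one must ensure the geometric configuration (extending $Q_1$ across $\gamma_1$, connectedness) genuinely propagates the vanishing to the part of $\Sigma$ relevant for recovering the DN data, which is where the hypotheses $\Omega_*$ connected and $\partial\Omega\subset\partial\Omega_*$ enter.
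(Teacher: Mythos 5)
Your overall architecture is the same as the paper's (Hahn--Banach duality, the backward adjoint problem with source given by the annihilating functional, the extra regularity of Lemma \ref{llll1} to make sense of $\partial_\nu v$, pairing against $S_{+,A,B,q,\gamma_1}$ to get $\partial_\nu v=0$ on $\gamma_1\times(0,T)$, then unique continuation and a final Green identity). But your key step contains a genuine error: you assert ``on $Q_1$ we have $f=0$ by construction''. This is backwards. The functional $f$ lives on $Q_1=(\Omega\setminus\overline{\Omega_*})\times(0,T)$ and is extended by zero to the rest of $Q$; hence $f$ vanishes on $\Omega_*\times(0,T)$, not on $Q_1$, and $v$ solves the \emph{homogeneous} equation only in $\Omega_*\times(0,T)$. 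Moreover your geometric claim $\gamma_1\times(0,T)\subset\partial\Omega\times(0,T)\subset\partial Q_1$ is false in this setting: since $\partial\Omega\subset\partial\Omega_*$, the collar $\Omega_*$ separates $Q_1$ from $\Sigma$, so the Cauchy data you obtained on $\gamma_1$ are not data on $\partial Q_1$ at all. Consequently the unique continuation step as you wrote it cannot be run, and its conclusion ``$v\equiv 0$ on $Q_1$'' is not only unjustified but generally false (it would force $f\equiv 0$ on $Q_1$, i.e. trivialize the problem).

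The repair is exactly what the paper does: attach a small exterior patch $\Omega_1$ across $\gamma_1$ (so that $\Omega_2=\Omega_*\cup\Omega_1$ is open and connected), extend $v$ and the coefficients by zero there, and apply parabolic unique continuation \emph{in} $\Omega_2\times(0,T)$, where the equation for $v$ is homogeneous because $f$ is supported in $\overline{Q_1}$. This gives $v=0$ on $\Omega_*\times(0,T)$, hence vanishing Cauchy data on $\partial\Omega_*\times(0,T)$; in particular $\partial_\nu v=0$ on all of $\Sigma$ (which is what your final Green-identity step over $Q$ needs), and equivalently zero Cauchy data on $\partial(\Omega\setminus\overline{\Omega_*})\times(0,T)$ (which is what the paper uses to integrate by parts over $Q_1$ against the element $u_0$ violating density). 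With the regions corrected in this way, the rest of your argument (including the cautionary remarks about $H^1$ solutions in the Carleman-based unique continuation and the integrability hypotheses on $\nabla_x\cdot A$, $\nabla_x\cdot B$, $q$) goes through and coincides in substance with the paper's proof.
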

\begin{proof} Since the proof of these two results are similar, we prove only the density of $S_{+,A,B,q,\gamma_1}$ in $S_{+,A,B,q}$. For this purpose, we assume the contrary.  Then, an application of Hahn Banach theorem implies that there exist $h\in L^2(Q_1)$ and $u_0\in S_{+,A,B,q}$ such that
\bel{lll4a}\int_{Q_1}hudxdt=0,\quad u\in S_{+,A,B,q,\gamma_1},\ee
\bel{lll4b}\int_{Q_1}hu_0dxdt=1.\ee
Now let us extend $h$ by zero to $h\in L^2(Q)$. According to \cite[Theorem 4.1, Chapter 3]{LM1} there exists a unique solution $w\in L^2(0,T;H^1(\Omega))\cap H^1(0,T;H^{-1}(\Omega))$ to the IBVP
$$\left\{\begin{array}{ll}-\partial_tw-\Delta_x w-A\cdot\nabla_xw+(q+\nabla_x\cdot (B-A))w=h,\quad &\textrm{in}\ Q,\\  w(\cdot,T)=0,\quad &\textrm{in}\ \Omega,\\ w=0,\quad &\textrm{on}\ \Sigma.\end{array}\right.$$
Moreover, fixing $F=A\cdot\nabla_xw-(q+\nabla_x\cdot (B-A))w+h\in L^2(Q)$, we deduce that $w$ solves
$$\left\{\begin{array}{ll}-\partial_tw-\Delta_x w=F,\quad &\textrm{in}\ Q,\\  w(\cdot,T)=0,\quad &\textrm{in}\ \Omega,\\ w=0,\quad &\textrm{on}\ \Sigma\end{array}\right.$$
and from Lemma \ref{llll1}, we deduce that  $w\in H^1(Q)$. In particular, we have $\Delta w\in L^2(0,T;L^2(\Omega))$ which implies that $\pd_\nu w\in L^2(0,T;H^{-\frac{1}{2}}(\pd\Omega))$.
In view of \eqref{lll4a}, choosing $u\in S_{+,A,B,q,\gamma_1}$, we get
$$\begin{aligned} &\left\langle \partial_\nu w, u\right\rangle_{L^2(0,T;H^{-\frac{1}{2}}(\pd\Omega)),L^2(0,T;H^{\frac{1}{2}}(\pd\Omega)) }\\
&=\int_Q \Delta w udxdt+\int_Q \nabla_x w\cdot \nabla_x udxdt+\int_\Sigma (A\cdot\nu)uwd\sigma(x)dt \\
&=\left\langle \pd_tu,w\right\rangle_{L^2(0,T;H^{-1}(\Omega)),L^2(0,T;H^{1}_0(\Omega))}+\int_Q \nabla_x w\cdot \nabla_x udxdt -\int_Q [-\pd_tw-\Delta w] udxdt+\int_Q \nabla_x\cdot(uwA)dxdt \\
&=\left\langle \partial_tu-\Delta_x u+A\cdot\nabla_xu+(\nabla_x\cdot B+q)u,w\right\rangle_{L^2(0,T;H^{-1}(\Omega)),L^2(0,T;H^{1}_0(\Omega))}\\
&\ \ \ - \int_Qu[-\partial_tw-\Delta_x w-A\cdot\nabla_xw+(q+\nabla_x(B-A))w]dxdt\\
\ &=-\int_{Q_1}hudxdt=0.\end{aligned}$$
Allowing $u\in S_{+,A,B,q,\gamma_1}$ to be arbitrary, we deduce that $\partial_\nu w_{|\gamma_1\times(0,T)}=0$. Thus, fixing $\Omega_1$ a set with nonempty interior such that $\Omega_1\cap\pd\Omega\subset\gamma_1$ and $\Omega_2=\Omega_*\cup\Omega_1$ is a connected open set of $\R^n$, we have
$$\left\{\begin{array}{ll}-\partial_tw-\Delta_x w-A\cdot\nabla_xw+(q-\textrm{div}_xA)w=0,\quad &\textrm{in}\ \Omega_2\times(0,T),\\  w=0,\quad &\textrm{on}\ \Omega_1\times (0,T) .\end{array}\right.$$
Then the unique continuation properties for parabolic equations implies that $w_{|\Omega_2\times(0,T)}=0$ which implies that $w_{|\Omega_*\times(0,T)}=0$. Note that here we consider an application of unique continuation to solutions of parabolic equations lying in $H^1(Q)$ and with a zero order coefficient $(q+\nabla_x\cdot (B-A))\in L^\infty(0,T;L^{p_1}(\Omega_2))$. For this purpose one needs to extend by density  Carleman estimates like \cite[Theorem 1.2]{SS} to such solutions and use Sobolev embedding theorem in  order to absorb the multiplication by $(q+\nabla_x\cdot (B-A))$ which corresponds to a bounded operator from $L^2(0,T;H^1(\Omega_2))$ to $L^2(\Omega_2\times(0,T))$. In particular, we have
$$w_{|\partial\Omega_* \times (0,T)}=\partial_\nu w_{|\partial\Omega_* \times (0,T)}=0 $$
and it follows that 
$$w_{|\partial(\Omega\setminus\Omega_*)\times (0,T)}=\partial_\nu w_{|\partial(\Omega\setminus\Omega_*)\times (0,T) }=0. $$
Therefore, we have
$$\int_{Q_1}\Delta wu_0dxdt+\int_{Q_1}\nabla_x w\cdot\nabla_xu_0dxdt=0,$$
$$\int_{Q_1}\nabla_x w\cdot\nabla_xu_0dxdt=-\left\langle \Delta u_0, w\right\rangle_{L^2(0,T;H^{-1}(\Omega\setminus\Omega_*),L^2(0,T;H^{1}_0(\Omega\setminus\Omega_*)},$$
$$-\int_{Q_1}\pd_t wu_0dxdt=\left\langle \pd_t u_0, w\right\rangle_{L^2(0,T;H^{-1}(\Omega\setminus\Omega_*),L^2(0,T;H^{1}_0(\Omega\setminus\Omega_*)}.$$
Thus, we find
$$\begin{aligned} & \int_{Q_1}u_0[-\partial_tw-\Delta_x w-A\cdot\nabla_xw+(q+\nabla_x\cdot (B-A))w]dxdt\\
&= \int_{Q_1}u_0[-\partial_tw-\Delta_x w-A\cdot\nabla_xw+(q+\nabla_x\cdot (B-A))w]dxdt\\
&\ \ \ -\int_{Q_1}[\partial_tu_0-\Delta_x u_0+A\cdot\nabla_xu_0+(q+\nabla_x\cdot (B))u_0]wdxdt\\
\ &=0.\end{aligned}$$
 According to this last formula, we have
$$\int_{Q_1}u_0hdxdt=\int_{Q_1}u_0[-\partial_tw-\Delta_x w-A\cdot\nabla_xw+(q+\nabla_x\cdot (B-A))w]dxdt=0$$
which contradicts \eqref{lll4b}. This proves the required density result.\end{proof}

Armed with this lemma we are now in position to complete the proof of Corollary \ref{c3}.\\
\ \\
\textbf{Proof of Corollary \ref{c3}.} 
Using arguments similar to those used for the derivation of \eqref{p3a}, we can prove that, for any $u_1\in S_{+,A_1,B_1,q_1,\gamma_1}$ and $u_2\in S_{-,A_2,B_2,q_2,\gamma_2}$, we have
$$\begin{aligned} &\left\langle(\Lambda_{A_1,B_1,q_1,\gamma_1,\gamma_2}-\Lambda_{A_2,B_2,q_2,\gamma_1,\gamma_2})g_+, g_-  \right\rangle_{\mathcal H_-^*,\mathcal H_-}\\
&=\int_Q(A_1-A_2)\cdot\nabla_xu_1u_2dxdt-\int_Q(B_1-B_2)\cdot\nabla_x(u_1u_2)dxdt+\int_Q(q_1-q_2)u_1u_2dxdt.\end{aligned}$$
with $g_+=u_1$ and $g_-=u_2$ on $\Sigma$. Then, \eqref{c3b} implies that, for any $u_1\in S_{+,A_1,B_1,q_1,\gamma_1}$, $u_2\in S_{-,A_2,B_2,q_2,\gamma_2}$, we get
\bel{c3c}\int_Q(A_1-A_2)\cdot\nabla_xu_1u_2dxdt-\int_Q(B_1-B_2)\cdot\nabla_x(u_1u_2)dxdt+\int_Q(q_1-q_2)u_1u_2dxdt=0.\ee
In view of \eqref{c3a}, we can rewrite \eqref{c3c} as
$$\int_{Q_1}(A_1-A_2)\cdot\nabla_xu_1u_2dxdt-\int_{Q_1}(B_1-B_2)\cdot\nabla_x(u_1u_2)dxdt+\int_{Q_1}(q_1-q_2)u_1u_2dxdt=0.$$
Then, using \eqref{c3c} and integrating by parts in $x\in\Omega\setminus\Omega_*$, for any $u_1\in S_{+,A_1,B_1,q_1,\gamma_1}$, $u_2\in S_{-,A_2,B_2,q_2,\gamma_2}$, we find
\bel{c3d}\int_{Q_1}(A_1-A_2)\cdot\nabla_xu_1u_2dxdt+\int_{Q_1}\nabla_x\cdot(B_1-B_2)u_1u_2dxdt+\int_{Q_1}(q_1-q_2)u_1u_2dxdt=0.\ee
Applying the density result of Lemma \ref{lll4}, we deduce that \eqref{c3d} holds true for  any $u_1\in S_{+,A_1,B_1,q_1,\gamma_1}$, $u_2\in S_{-,A_2,B_2,q_2}$. Then, using \eqref{c3c} and integrating by parts in $x\in\Omega\setminus\Omega_*$, for  any $u_1\in S_{+,A_1,B_1,q_1,\gamma_1}$, $u_2\in S_{-,A_2,B_2,q_2}$, we obtain
\bel{c3e}-\int_{Q_1}\nabla_x\cdot\left[(A_1-A_2)u_2\right]u_1dxdt+\int_{Q_1}\nabla_x\cdot(B_1-B_2)u_1u_2dxdt+\int_{Q_1}(q_1-q_2)u_1u_2dxdt=0.\ee
Applying again Lemma \ref{lll4}, we deduce that \eqref{c3e} holds for any $u_1\in S_{+,A_1,B_1,q_1}$, $u_2\in S_{-,A_2,B_2,q_2}$. Integrating again by parts, we deduce that \eqref{c3c} holds for any $u_1\in S_{+,A_1,B_1,q_1}$, $u_2\in S_{-,A_2,B_2,q_2}$. Finally, allowing  $u_1\in S_{+,A_1,B_1,q_1}$, $u_2\in S_{-,A_2,B_2,q_2}$ to correspond to the exponentially growing and decaying GO solutions used in Theorem \ref{t5}, we can complete the proof of the corollary. \qed

\section{Application to the recovery of nonlinear terms}
In this section $\Omega$ is of class $\mathcal C^{2+\alpha}$ and we denote by $\Sigma_p$ the parts of $\partial Q$ given by $\Sigma_p=\Sigma\cup \Omega^0$. Consider  the quasi-linear IBVP \eqref{1.1}.
Following \cite{LSU}, we start by fixing the condition for the well posedness of this problem. We consider, functions $F\in C^1(\overline{Q}\times\mathbb{R} \times\R^n)$ satisfying  the following conditions:

There exist three non-negative constants $c_0$,  $c_1$ and $c_2$ so that
\bel{nl1}
uF(x,t,u,v)\geq -c_0|v|^2-c_1|u|^2-c_2,\;\; (x,t,u,v)\in \overline{Q}\times \mathbb{R}\times\R^n.
\ee
\bel{nl2}
F(x,0,u,v)=0,\;\; (x,u,v)\in \partial\Omega\times \mathbb{R}\times\R^n.
\ee
Moreover, we assume that for $|u|\leq M_1$ and $(x,t)\in\overline{Q}$ there exists a constant $c_3(M_1)>0$, depending only on $T$, $\Omega$ and $M_1$, such that
\bel{nl3} |F(x,t,u,v)|\leq c_3(M_1)(1+|v|)^2.\ee
Here $M_1\mapsto c_3(M_1)$ is assumed to be monotonically increasing.

Now, for $G\in C^{2+\alpha ,1+\alpha /2}(\overline{Q})$ consider the compatibility condition
\bel{nnl}\partial_tG(x,0)=\Delta G(x,0),\quad x\in\partial\Omega.\ee
We consider the set $\mathcal X=\{G_{|\overline{\Sigma}_p};\; \mbox{for some}\; G\in C^{2+\alpha ,1+\alpha /2}(\overline{Q})\; \mbox{such that \eqref{nnl} is fulfilled}\}$ with the norm
$$\norm{G}_\mathcal X:=\|G_{|\Sigma}\|_{ C^{2+\alpha ,1+\alpha /2}(\overline{\Sigma})}+\|G_{|\Omega\times\{0\}}\|_{ C^{2+\alpha }(\overline{\Omega})}.$$
 According to \cite[Theorem 6.1, pp. 452]{LSU}, for any $G\in \mathcal X$ 
and for any $F\in C^1(\overline{Q}\times\mathbb{R} \times\R^n)$ satisfying \eqref{nl1}-\eqref{nl3}, problem \eqref{1.1} admits a unique solution $u_{F,G}\in C^{2+\alpha ,1+\alpha /2}(\overline{Q})$. Moreover, according to  \cite[Theorem 2.2, pp. 429]{LSU}, \cite[Theorem 4.1, pp. 443]{LSU} and \cite[Theorem 5.4, pp. 448]{LSU}, for any $r>0$ and for any  $G\in \mathcal X$ satisfying 
$$\norm{G}_{\mathcal X}\leq r,$$
there exists a constant $M_r$, depending on $\Omega$, $T$, $c_0$, $c_1$, $c_2$, $c_3$ and $r$ such that 
\bel{nl4}\|u_{F,G}\|_{C^{\alpha ,\alpha /2}(\overline{Q})}+\|\nabla_xu_{F,G}\|_{C^{\alpha ,\alpha /2}(\overline{Q})}\leq M_r.\ee
We associate to \eqref{1.1} the DN map
$$\mathcal N_F:\mathcal X\ni G\mapsto {\partial_\nu u_{F,G}}_{|\Sigma}\in L^2(\Sigma).$$
Since \eqref{1.1} is not linear, clearly $\mathcal N_F$ is also nonlinear. Therefore, in a similar way to \cite{CK2,I2,I3,I4}, we will start by linearizing this operator by considering the Fr\'echet derivative of $\mathcal N_F$. 
\subsection{Linearization procedure}

We fix $F\in \mathcal C^1(\overline{Q}\times\R^n\times \mathbb{R})$ satisfying \eqref{nl1}-\eqref{nl3} such that $\partial_uF\in \mathcal C^2(\overline{Q}\times\R^n\times \mathbb{R};\R)$ and  $\partial_vF\in  \mathcal C^2(\overline{Q}\times\R^n\times \mathbb{R};\R^n)$. Then, for $H\in\mathcal X$, we consider the IBVP
\bel{lin}
\left\{
\begin{array}{ll}
(\partial_tw-\Delta w+A_{F,G}(x,t)\cdot \nabla_x w+q_{F,G}(x,t)w=0\quad &\mbox{in}\; Q,
\\
w=H &\mbox{on}\; \Sigma _p,
\end{array}
\right.
\ee
with
$$A_{F,G}(x,t):=\partial _vF(x,t,u_{F,G }(x,t),\nabla_xu_{F,G }(x,t)), \quad (x,t)\in Q, $$
$$q_{F,G}(x,t):=\partial _uF(x,t,u_{F,G }(x,t),\nabla_xu_{F,G }(x,t)), \quad (x,t)\in Q. $$
 In light of \cite[Theorem 5.4, pp. 322]{LSU} and \eqref{nl2}, the IBVP \eqref{lin} has a unique solution $w=w_{F,G,H}\in  C^{2+\alpha ,1+\alpha /2}(\overline{Q})$ satisfying
\[
\|w_{F,G,H}\|_{C^{2+\alpha ,1+\alpha /2}(\overline{Q})}\le C\|H\|_{\mathcal X}
\]
for some constant $C$ depending only on $Q$, $F$ and $G$.  From now on, for $X=\Omega$ or $X=\pd\Omega$ and $r,s>0$ we consider the Sobolev spaces
$$H^{r,s}(X\times (0,T))=H^s(0,T;L^2(X))\cap L^2(0,T;H^r(X)).$$
Using solutions of \eqref{lin}, we will consider the linearization of $\mathcal N_F$ in the following way.

\begin{prop}\label{pS1}
$F\in C^1(\overline{Q}\times\R^n\times \mathbb{R})$ satisfying \eqref{nl1}-\eqref{nl3}  such that $\partial_uF\in \mathcal C^1(\overline{Q}\times\R^n\times \mathbb{R};\R)$ and  $\partial_vF\in  \mathcal C^1(\overline{Q}\times\R^n\times \mathbb{R};\R^n)$. Then, $N_F$ is  Fr\'echet continuously differentiable and 
\[
N'_F(G ) H=\partial _\nu w_{F,G,H},\;\; G,H\in \mathcal X.
\]
\end{prop}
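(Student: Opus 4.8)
\textbf{Proof proposal for Proposition \ref{pS1}.}

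The plan is to prove Fréchet differentiability of $\mathcal N_F$ at a point $G\in\mathcal X$ by showing that the candidate derivative is precisely the linear map $H\mapsto \partial_\nu w_{F,G,H}$, where $w_{F,G,H}$ solves the linearized problem \eqref{lin}. First I would set up the difference: for $H\in\mathcal X$ fix $\widetilde G=G+H$ and write $v:=u_{F,\widetilde G}-u_{F,G}-w_{F,G,H}$. One computes directly that $v$ vanishes on $\Sigma_p$ (since $u_{F,\widetilde G}=\widetilde G=G+H=u_{F,G}+w_{F,G,H}$ there) and that $v$ solves a linear parabolic IBVP of the form $\partial_t v-\Delta v+A_{F,G}\cdot\nabla_x v+q_{F,G}v=R_H$ in $Q$, where the source $R_H$ is the Taylor remainder of $F$ along the segment joining $(u_{F,G},\nabla_x u_{F,G})$ to $(u_{F,\widetilde G},\nabla_x u_{F,\widetilde G})$; explicitly, using $F\in C^1$ together with $\partial_uF,\partial_vF\in C^1$, Taylor's theorem gives
\[R_H=-\int_0^1\bigl[(\partial_uF(\cdots_\theta)-\partial_uF(\cdots_0))(u_{F,\widetilde G}-u_{F,G})+(\partial_vF(\cdots_\theta)-\partial_vF(\cdots_0))\cdot\nabla_x(u_{F,\widetilde G}-u_{F,G})\bigr]d\theta,\]
with $\cdots_\theta$ denoting the point $(x,t,u_{F,G}+\theta(u_{F,\widetilde G}-u_{F,G}),\nabla_x u_{F,G}+\theta\nabla_x(u_{F,\widetilde G}-u_{F,G}))$.

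The key quantitative input is a continuity/stability estimate for the fully nonlinear problem: by \cite[Theorem 6.1, pp. 452]{LSU} combined with the a priori bounds \eqref{nl4} and the Schauder-type estimates \cite[Theorem 5.4, pp. 322]{LSU}, one gets $\|u_{F,\widetilde G}-u_{F,G}\|_{C^{2+\alpha,1+\alpha/2}(\overline Q)}\leq C\|H\|_{\mathcal X}$ for $\|H\|_{\mathcal X}$ bounded, with $C$ depending on $Q$, $F$, $G$. (This itself follows by writing the difference as a solution of a linear parabolic equation with $C^{\alpha,\alpha/2}$ coefficients and applying linear Schauder estimates, using \eqref{nl4} to control those coefficients.) Feeding this into the expression for $R_H$ and using uniform continuity of $\partial_uF,\partial_vF$ on the compact set where the arguments live, one obtains $\|R_H\|_{C^{\alpha,\alpha/2}(\overline Q)}=o(\|H\|_{\mathcal X})$ as $\|H\|_{\mathcal X}\to 0$ — more precisely $\|R_H\|_{C^{\alpha,\alpha/2}}\le \omega(C\|H\|_{\mathcal X})\cdot C\|H\|_{\mathcal X}$ for a modulus of continuity $\omega$. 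Then the linear Schauder estimate for the IBVP solved by $v$ yields $\|v\|_{C^{2+\alpha,1+\alpha/2}(\overline Q)}\le C\|R_H\|_{C^{\alpha,\alpha/2}(\overline Q)}=o(\|H\|_{\mathcal X})$, and since $H\mapsto\partial_\nu(\cdot)_{|\Sigma}$ is bounded from $C^{2+\alpha,1+\alpha/2}(\overline Q)$ to $L^2(\Sigma)$ (indeed to $C^{1+\alpha,(1+\alpha)/2}(\overline\Sigma)$), this gives $\|\mathcal N_F(\widetilde G)-\mathcal N_F(G)-\partial_\nu w_{F,G,H}\|_{L^2(\Sigma)}=o(\|H\|_{\mathcal X})$, i.e. $\mathcal N'_F(G)H=\partial_\nu w_{F,G,H}$.

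It remains to check that $\mathcal N'_F$ is \emph{continuous} in $G$, i.e. $G\mapsto \mathcal N'_F(G)$ is continuous from $\mathcal X$ into $\mathcal B(\mathcal X;L^2(\Sigma))$. For $G_1,G_2\in\mathcal X$ and $H\in\mathcal X$ with $\|H\|_{\mathcal X}\le 1$, the difference $w_{F,G_1,H}-w_{F,G_2,H}$ solves a linear parabolic problem with zero boundary data on $\Sigma_p$ and source involving $(A_{F,G_1}-A_{F,G_2})\cdot\nabla_x w_{F,G_2,H}+(q_{F,G_1}-q_{F,G_2})w_{F,G_2,H}$; since $G\mapsto u_{F,G}$ is continuous into $C^{2+\alpha,1+\alpha/2}(\overline Q)$ (same stability estimate as above) and $\partial_uF,\partial_vF$ are $C^1$, the coefficients $A_{F,G},q_{F,G}$ depend continuously on $G$ in $C^{\alpha,\alpha/2}(\overline Q)$, so the source is small in $C^{\alpha,\alpha/2}$ uniformly over $\|H\|_{\mathcal X}\le 1$, and one more application of the linear Schauder estimate and the trace bound closes the argument. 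The main obstacle — and the place where one must be careful — is the uniformity: all constants in the Schauder/stability estimates must be controlled uniformly as $H$ (resp. $G_1-G_2$) ranges over a bounded set, which requires combining \eqref{nl4} with the fact that the arguments $(u,\nabla_x u)$ of $F$ stay in a fixed compact set, so that the moduli of continuity of $\partial_uF,\partial_vF$ apply; the nonlinearity $F$ enters only through its Taylor remainder, and conditions \eqref{nl1}--\eqref{nl3} are used solely to guarantee well-posedness and the a priori bound \eqref{nl4}.
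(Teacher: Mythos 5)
Your setup is the same as the paper's (the decomposition $v=u_{F,G+H}-u_{F,G}-w_{F,G,H}$, the linearized equation with coefficients $A_{F,G},q_{F,G}$, and the integral Taylor remainder $R_H$ are all correct), but the key quantitative step fails as stated. You claim $\norm{R_H}_{\mathcal C^{\alpha,\frac{\alpha}{2}}(\overline{Q})}\leq \omega(C\norm{H}_{\mathcal X})\,C\norm{H}_{\mathcal X}$ "using uniform continuity of $\partial_uF,\partial_vF$". Uniform continuity only controls the sup norm of $f_\theta:=\partial_uF(\cdots_\theta)-\partial_uF(\cdots_0)$ (and of the analogous term in $\partial_vF$); under the hypotheses of Proposition \ref{pS1} ($\partial_uF,\partial_vF$ merely $\mathcal C^1$) its parabolic H\"older seminorm is only \emph{bounded} — by the Lipschitz constant of $\partial_uF$ on the relevant compact set times the $\mathcal C^{\alpha,\frac{\alpha}{2}}$ bounds of the arguments coming from \eqref{nl4} — and does not tend to zero with $\norm{H}_{\mathcal X}$ (a continuous second derivative composed with H\"older arguments need not even be H\"older, let alone H\"older-small). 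Hence in the product estimate for $f_\theta\,(u_{F,G+H}-u_{F,G})$ the term $[f_\theta]_{\alpha,\frac{\alpha}{2}}\norm{u_{F,G+H}-u_{F,G}}_{L^\infty}$ is only $O(\norm{H}_{\mathcal X})$, so $\norm{R_H}_{\mathcal C^{\alpha,\frac{\alpha}{2}}}=O(\norm{H}_{\mathcal X})$ rather than $o(\norm{H}_{\mathcal X})$, and the Schauder estimate then gives $\norm{v}_{\mathcal C^{2+\alpha,1+\frac{\alpha}{2}}}=O(\norm{H}_{\mathcal X})$, which does not prove differentiability. The same over-claim reappears in your continuity step, where you assert that $G\mapsto A_{F,G},q_{F,G}$ is continuous into $\mathcal C^{\alpha,\frac{\alpha}{2}}(\overline{Q})$; for a $\mathcal C^1$ nonlinearity this is not automatic.

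This is exactly why the paper does not work in the parabolic H\"older scale for the error equation: it measures the remainder only in $L^\infty(Q)\hookrightarrow L^2(Q)$, where \eqref{nl4} together with the $L^\infty$ stability bound for $y=u_{F,G+H}-u_{F,G}$ and $\nabla_x y$ (obtained from the linear equation satisfied by $y$ and the Ladyzhenskaja--Solonnikov--Ural'tzeva gradient estimate) gives the quadratic bound $\norm{K_H}_{L^\infty(Q)}\leq C\norm{H}_{\mathcal X}^2$, and then solves the error equation by $L^2$ maximal regularity, obtaining $\norm{z}_{H^{2,1}(Q)}\leq C\norm{H}_{\mathcal X}^2$ and using the bounded trace $H^{2,1}(Q)\to L^2(\Sigma)$; the continuity of the derivative is handled the same way. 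Your argument can be repaired either by switching to this $L^2$ framework, or by keeping your H\"older route but interpolating: you do have $\norm{R_H}_{L^\infty}=O(\norm{H}_{\mathcal X}^2)$ and $\norm{R_H}_{\mathcal C^{\alpha,\frac{\alpha}{2}}}=O(\norm{H}_{\mathcal X})$, hence $\norm{R_H}_{\mathcal C^{\beta,\frac{\beta}{2}}}=o(\norm{H}_{\mathcal X})$ for any $\beta<\alpha$, and Schauder estimates with exponent $\beta$ (the corner compatibility needed there holds because \eqref{nl2} forces $\partial_uF(x,0,u,v)=\partial_vF(x,0,u,v)=0$ for $x\in\partial\Omega$, a point you should make explicit) then close the argument; but as written the step is a genuine gap.
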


\begin{proof}
Since  $u\in H^{2,1}(Q)\rightarrow \partial _\nu u\in  L^2(\Sigma)$ is a bounded linear operator, we only need to show that $$\mathcal M_F:G\in \mathcal X\rightarrow u_{F,G}\in H^{2,1}(Q)$$ is  differentiable with $\mathcal M'_F(G )(H)=w_{F,G,H}$, $G,H\in \mathcal X$. For this purpose, we fix $G,H\in \mathcal X$ with $\norm{H}_\mathcal X\leq1$ and we consider 
\[
z=u_{F,G+H}-u_{F,G} -w_{F,G,H}\in C^{2+\alpha ,1+\alpha /2}(\overline{Q})
\]
and set
\begin{align*}
&A(x,t)=\partial _vF(x,t,u_{F,G }(x,t),\nabla_xu_{F,G }(x,t)),\\
&q(x,t)=\partial _uF(x,t,u_{F,G }(x,t),\nabla_xu_{F,G }(x,t)),
\\
&A_1(x,t)=\int_0^1(1-\tau)\partial _v^2F(x,t,u_{F,G }(x,t),\nabla_xu_{F,G }(x,t)+\tau(\nabla_xu_{F,G +H}-\nabla_xu_{F,G })(x,t) )d\tau ,\\
&A_2(x,t)=\int_0^1(1-\tau)\partial _v\partial_uF(x,t,u_{F,G }(x,t),\nabla_xu_{F,G }(x,t)+\tau(\nabla_xu_{F,G +H}-\nabla_xu_{F,G })(x,t) )d\tau,\\
&q_1(x,t)=\int_0^1(1-\tau)\partial _u^2F(x,t,u_{F,G }(x,t)+\tau(u_{F,G +H}-u_{F,G })(x,t),\nabla_xu_{F,G+H }(x,t) )d\tau.\end{align*}
Applying Taylor's formula, we get
\begin{align*}
&F(x,t,u_{F,G }(x,t),\nabla_xu_{F,G+H }(x,t))-F(x,t,u_{F,G }(x,t),\nabla_xu_{F,G }(x,t))
\\ 
&=A(x,t)\cdot(\nabla_xu_{F,G+H }(x,t)-\nabla_xu_{F,G }(x,t)) \\
&\ \ \ +A_1(x,t)(\nabla_xu_{F,G+H }(x,t)-\nabla_xu_{F,G }(x,t),\nabla_xu_{F,G+H }(x,t)-\nabla_xu_{F,G }(x,t)).
\end{align*}
\begin{align*}
&F(x,t,u_{F,G+H }(x,t),\nabla_xu_{F,G+H }(x,t))-F(x,t,u_{F,G }(x,t),\nabla_xu_{F,G+H }(x,t))
\\ 
&=q(x,t)(u_{F,G+H }(x,t)-u_{F,G }(x,t))+q_1(x,t)(u_{F,G+H }(x,t)-u_{F,G }(x,t))^2 \\
&\ \ \ +A_2(x,t)(u_{F,G+H }(x,t)-u_{F,G }(x,t))(\nabla_xu_{F,G+H }(x,t)-\nabla_xu_{F,G }(x,t)).
\end{align*}
Thus, fixing
\begin{align*}
&K_H(x,t)
\\ 
&=q_1(x,t)(u_{F,G+H }(x,t)-u_{F,G }(x,t))^2+A_2(x,t)(u_{F,G+H }(x,t)-u_{F,G }(x,t))(\nabla_xu_{F,G+H }(x,t)-\nabla_xu_{F,G }(x,t))\\
&\ \ \ +A_1(x,t)(\nabla_xu_{F,G+H }(x,t)-\nabla_xu_{F,G }(x,t),\nabla_xu_{F,G+H }(x,t)-\nabla_xu_{F,G }(x,t))
\end{align*}
we deduce that $z$ is the solution of the IBVP
\begin{equation*}
\left\{
\begin{array}{ll}
\partial_tz-\Delta_x z+A\cdot\nabla_x z+qz=K_H\quad &\mbox{in}\; Q,
\\
z=0 &\mbox{on}\; \Sigma _p.
\end{array}
\right.
\end{equation*}
Combining this with \eqref{nl4}, \cite[Theorem 4.1, Chapter 3]{LM1}, \cite[Theorem 3.2, Chapter 4]{LM2} and the fact that $\norm{H}_\mathcal X\leq1$, we deduce that this last problem admits a unique solution $z\in H^{2,1}(Q)$ satisfying
\bel{ps1a}\norm{z}_{H^{2,1}(Q)}\leq C\norm{K_H}_{L^2(Q)}\leq C\norm{K_H}_{L^\infty(Q)}\ee
with $C$ depending on $\Omega$, $T$, $c_0$, $c_1$, $c_2$, $c_3$ and $\norm{G}_\mathcal X$. Moreover, applying again \eqref{nl4}, we obtain
$$\norm{K_H}_{L^\infty(Q)}\leq C\left(\norm{\nabla_xu_{F,G+H }-\nabla_xu_{F,G }}_{L^\infty(Q)}+\norm{u_{F,G+H }-u_{F,G }}_{L^\infty(Q)}\right)^2,$$
with $C$ depending on $\Omega$, $T$, $c_0$, $c_1$, $c_2$, $c_3$ and $\norm{G}_\mathcal X$.
Combining this with \eqref{ps1a}, we get
\bel{ps1b}\norm{z}_{H^{2,1}(Q)}\leq C\left(\norm{\nabla_xu_{F,G+H }-\nabla_xu_{F,G }}_{L^\infty(Q)}+\norm{u_{F,G+H }-u_{F,G }}_{L^\infty(Q)}\right)^2\ee
with $C$ depending on $\Omega$, $T$, $c_0$, $c_1$, $c_2$, $c_3$ and $\norm{G}_\mathcal X$.
On the other hand, fixing  $y=u_{F,G+H }-u_{F,G }$, one can check that $y$ solves 
\begin{equation*}
\left\{
\begin{array}{ll}
\partial_ty-\Delta_x y+\tilde{A}(x,t)\cdot \nabla_x y+\tilde{q}(x,t) y=0\quad &\mbox{in}\; Q,
\\
y=H &\mbox{on}\; \Sigma _p ,
\end{array}
\right.
\end{equation*}
with
\[
\tilde{A}(x,t)=\int_0^1\partial _v F(x,t,u_{F,G+H}(x,t),\nabla_xu_{F,G}(x,t)+\tau (\nabla_xu_{F,G+H}(x,t)-\nabla_xu_{F,G}(x,t)))d\tau,
\]
\[
\tilde{q}(x,t)=\int_0^1\partial _u F(x,t,u_{F,G}(x,t)+\tau (u_{F,G+H}(x,t)-u_{F,G}(x,t)),\nabla_xu_{F,G}(x,t))d\tau.
\]
Applying again \eqref{nl4}, we deduce that
$$\norm{\tilde{A}}_{C^{\alpha ,\alpha /2}(\overline{Q})}+\norm{\tilde{q}}_{C^{\alpha ,\alpha /2}(\overline{Q})}\leq C$$
with $C$ depending on $\Omega$, $T$, $c_0$, $c_1$, $c_2$, $c_3$ and $\norm{G}_\mathcal X$. Therefore, applying \cite[Theorem 5.3, pp. 320-321]{LSU}, we obtain
\bel{ps1c} \norm{\nabla_xy}_{L^\infty(Q)}+\norm{y}_{L^\infty(Q)}\leq C\norm{H}_\mathcal X,\ee
with $C$ depending on $\Omega$, $T$, $c_0$, $c_1$, $c_2$, $c_3$ and $\norm{G}_\mathcal X$.
Combining \eqref{ps1a}-\eqref{ps1c}, we find
$$\norm{u_{F,G+H}-u_{F,G} -w_{F,G,H}}_{H^{2,1}(Q)}\leq  C\norm{H}^2_\mathcal X.$$
From this last estimate one can easily check that $\mathcal M_F$ is differentiable at $G$ and $M'_F(G)(H)=w_{F,G,H}$, $H\in \mathcal X$. To complete the proof of the proposition,
we only need to check the continuity of $\mathcal X\ni G\rightarrow \mathcal M_F'(G)\in \mathscr{B}(\mathcal X, H^{2,1}(Q))$. For this purpose, we fix $G,K,H\in \mathcal X$, we consider $S:=w_{F,G+K,H}-w_{F,G,H}$, with $\norm{K}_\mathcal X\leq 1$, and we observe that $S$ solves
\begin{equation*}
\left\{
\begin{array}{ll}

\partial _tS -\Delta_x S +\tilde{A}_1(x,t)\cdot\nabla_x S+\tilde{q}_1(x,t) S =R_K\quad &\mbox{in}\; Q,
\\
S =0 &\mbox{on}\; \Sigma _p ,
\end{array}
\right.
\end{equation*}
where
$$\tilde{A}_1(x,t):=\partial _vF(x,t,u_{F,G+K}(x,t), \nabla_x u_{F,G+K}(x,t)),\quad (x,t)\in Q,$$
$$\tilde{q}_1(x,t):=\partial _uF(x,t,u_{F,G+K}(x,t), \nabla_x u_{F,G+K}(x,t)),\quad (x,t)\in Q,$$
$$R_K:= A_3 (\nabla_xu_{F,G}-\nabla_xu_{F,G+k},\nabla_xw_{F,G,H})+q_3 (u_{F,G}-u_{F,G+K})w_{F,G,H},$$
with
\[
A_3 (x,t)=\int_0^1\partial _v^2F(x,t,u_{F,G+K}(x,t),\nabla_x u_{F,G+K}(x,t)+\tau (\nabla_xu_{F,G+K}(x,t)-\nabla_xu_{F,G}(x,t))) d\tau,
\]

\[
q_3 (x,t)=\int_0^1\partial _u^2F(x,t,u_{F,G+K}(x,t)+\tau (u_{F,G+K}(x,t)-u_{F,G}(x,t)),\nabla_x u_{F,G+K}(x,t)) d\tau .
\]
Repeating the above arguments, we find 
$$\norm{S}_{H^{2,1}(Q)}\leq C\norm{R_K}_{L^2(Q)}\leq  C\norm{K}_\mathcal X,$$
with $C$ depending on $\Omega$, $T$, $c_0$, $c_1$, $c_2$, $c_3$ and $\norm{G}_\mathcal X+\norm{H}_\mathcal X$. This proves the continuity of $ G\mapsto \mathcal M_F'(G)\in \mathscr{B}(\mathcal X, H^{2,1}(Q))$ and it completes the proof of the proposition.
\end{proof}

We will apply this property of the DN map $\mathcal N_F$ in order to complete the proof of Theorem \ref{tnl1}, \ref{tnl3} and Corollary \ref{cnl2}, \ref{cnl}.
\subsection{Proof of Theorem \ref{tnl1} and Corollary \ref{cnl}}

This subsection is devoted to the proof of Theorem \ref{tnl1}, \ref{tnl3} and Corollary \ref{cnl2}, \ref{cnl}. We start by considering the following intermediate result.

\begin{lem}\label{nll1} Let $G\in \{K_{|\Sigma_p}:\ K\in\mathcal C^\infty(\overline{Q}),\ \partial_tK=0,\ \nabla_xK\ \textrm{is constant}\}$ and assume that
\bel{nll1a}\partial_x^\ell F(x,0,u,v)=0,\quad    x\in\partial\Omega,\ u\in\R,\ v\in\R^n,\ \ell\in\mathbb N^n,\ |\ell|\leq2.\ee
Then the problem \eqref{1.1} admits a unique solution $u_{F,G}\in \mathcal C^{2+\alpha,1+\frac{\alpha}{2}}(\overline{Q})$ satisfying $ \partial_t u_{F,G}\in \mathcal C^{2+\alpha,1+\frac{\alpha}{2}}(\overline{Q})$.

\end{lem}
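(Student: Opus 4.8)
The plan is to upgrade the general well-posedness result \cite[Theorem 6.1, pp. 452]{LSU} for \eqref{1.1} by differentiating the equation in time and showing that $v:=\partial_t u_{F,G}$ solves a linear parabolic problem with regular enough data and compatible boundary/initial conditions, so that parabolic Schauder theory applies. First I would recall that, since $G\in\{K_{|\Sigma_p}:K\in\mathcal C^\infty(\overline{Q}),\ \partial_tK=0,\ \nabla_xK\ \textrm{constant}\}$, we have $G\in\mathcal X_0$-type data (in particular $G\in\mathcal X$, because $K$ affine in $x$ and time-independent forces $\partial_tK-\Delta K=0$ on $\partial\Omega\times\{0\}$, so the compatibility condition \eqref{nnl} holds), and the hypotheses \eqref{nl1}--\eqref{nl3} give a unique $u_{F,G}\in\mathcal C^{2+\alpha,1+\frac{\alpha}{2}}(\overline{Q})$ with the a priori bound \eqref{nl4}. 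Formally differentiating $\partial_tu-\Delta u+F(x,t,u,\nabla_xu)=0$ with respect to $t$ yields
\begin{equation*}
\partial_t v-\Delta_x v+\partial_vF(x,t,u,\nabla_xu)\cdot\nabla_x v+\partial_uF(x,t,u,\nabla_xu)\,v=-\partial_tF(x,t,u,\nabla_xu),\quad\textrm{in}\ Q,
\end{equation*}
which is a linear parabolic equation for $v$ with Hölder-continuous coefficients (by \eqref{nl4} and the assumed regularity $F\in C^{2+\alpha,1+\frac{\alpha}{2}}(\overline{Q};\mathcal C^3(\R\times\R^n))$) and Hölder-continuous right-hand side.

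The key steps, in order, would be: (i) justify the differentiation rigorously by a difference-quotient argument — set $v_h(x,t):=h^{-1}(u_{F,G}(x,t+h)-u_{F,G}(x,t))$, observe it solves a linear parabolic problem obtained from the equation by the same Taylor-expansion device used in the proof of Proposition \ref{pS1}, derive uniform estimates in $h$ from \eqref{nl4} and Schauder bounds, and pass to the limit $h\to0$; (ii) determine the initial data of $v$ at $t=0$: from the equation, $v(x,0)=\partial_tu_{F,G}(x,0)=\Delta_x u_{F,G}(x,0)-F(x,0,u_{F,G}(x,0),\nabla_xu_{F,G}(x,0))$, and since $u_{F,G}(\cdot,0)=G_{|\Omega^0}$ is the restriction of a function affine in $x$ (hence $\Delta_xG=0$) and $F(x,0,\cdot,\cdot)\equiv0$ on $\partial\Omega$ by \eqref{nl2}, one gets that $v(\cdot,0)$ is a smooth function on $\overline{\Omega}$; (iii) check the boundary data: $v_{|\Sigma}=\partial_tG=0$, which is $\mathcal C^{2+\alpha,1+\frac{\alpha}{2}}(\overline{\Sigma})$ trivially; (iv) verify the parabolic compatibility conditions at $\partial\Omega\times\{0\}$ up to the order needed for $\mathcal C^{2+\alpha,1+\frac{\alpha}{2}}$ regularity — this is where hypothesis \eqref{nll1a} (vanishing of $\partial_x^\ell F(x,0,u,v)$ for $|\ell|\le2$ on $\partial\Omega$) is used, together with $\Delta_xG=0$, to ensure that $v(\cdot,0)$ and its relevant spatial derivatives vanish on $\partial\Omega$ so that the zeroth- and first-order compatibility conditions for the linear problem for $v$ hold; (v) apply the linear parabolic Schauder theory (e.g. \cite[Theorem 5.3, pp. 320--321]{LSU} or \cite[Theorem 5.4, pp. 322]{LSU}) to conclude $v=\partial_tu_{F,G}\in\mathcal C^{2+\alpha,1+\frac{\alpha}{2}}(\overline{Q})$, which is the assertion.

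The main obstacle I expect is step (iv): carefully bookkeeping the compatibility conditions at the corner $\partial\Omega\times\{0\}$ for the $v$-equation. One must express $v(\cdot,0)$, $\Delta_x v(\cdot,0)$, and the combination $\partial_t v-\Delta_x v+\partial_vF\cdot\nabla_x v+\partial_uF\,v$ evaluated at $t=0$ on $\partial\Omega$ purely in terms of $u_{F,G}(\cdot,0)=G_{|\Omega^0}$ and spatial derivatives of $F(\cdot,0,\cdot,\cdot)$, and then invoke \eqref{nll1a} and the affineness of $G$ to see they vanish. The subtlety is that $\nabla_xu_{F,G}(\cdot,0)=\nabla_xG$ is a fixed constant vector but $u_{F,G}(\cdot,0)$ itself varies, so one needs the full strength of $\partial_x^\ell F(x,0,u,v)=0$ on $\partial\Omega$ for all $|\ell|\le2$ and all $(u,v)$, not just at the particular trace values; once this is in place, the compatibility of the linearized problem is automatic and the Schauder estimate closes the argument. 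Everything else is a routine adaptation of the difference-quotient/Schauder machinery already invoked in Section 6.
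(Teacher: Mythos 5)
Your plan coincides with the paper's proof: the paper likewise sets $z=\partial_t u_{F,G}$, observes that it solves the linear parabolic IBVP with coefficients $A=\partial_vF(x,t,u_{F,G},\nabla_x u_{F,G})$, $q=\partial_uF(x,t,u_{F,G},\nabla_x u_{F,G})$, source $R=-\partial_tF(x,t,u_{F,G},\nabla_x u_{F,G})$, zero lateral data $\partial_tK=0$ and initial value $z_0=-F(x,0,K(x,0),\nabla_xK(x,0))$ (using that $K$ is affine so $\Delta_xK=0$), verifies the compatibility condition at $\partial\Omega\times\{0\}$ through \eqref{nll1a} exactly as in your step (iv), and concludes with the linear Schauder result \cite[Theorem 5.3, pp. 320-321]{LSU}. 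Your extra difference-quotient justification of the time differentiation is additional care the paper does not spell out, but it does not alter the route.
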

\begin{proof} Let $u_{F,G}\in \mathcal C^{2+\alpha,1+\frac{\alpha}{2}}(\overline{Q})$ be the solution of \eqref{1.1} and fix $K\in\mathcal C^\infty(\overline{Q})$ satisfying $\partial_tK=0$, $\nabla_xK$ is constant such that $K_{|\Sigma_p}=G$. We start by fixing $z=\partial_tu_{F,G}$, $$A(x,t):= \partial_vF(x,t,u_{F,G}(x,t),\nabla_xu_{F,G}(x,t)),\quad q(x,t):=\partial_uF(x,t,u_{F,G}(x,t),\nabla_xu_{F,G}(x,t)),$$
$$R:(x,t)\mapsto-\partial_t F(x,t,u_{F,G}(x,t),\nabla_xu_{F,G}(x,t))\in \mathcal C^{\alpha,\frac{\alpha}{2}}(\overline{Q})$$
and $z_0\in \mathcal C^{2+\alpha}(\overline{\Omega})$, $g\in \mathcal C^{2+\alpha,1+\frac{\alpha}{2}}(\overline{\Sigma})$ such that
$$z_0(x):=\Delta_x K(x,0)-F(x,0,K(x,0),\nabla_xK(x,0))=-F(x,0,K(x,0),\nabla_xK(x,0)) ,\quad x\in\Omega,$$
$$g(x,t)=\partial_t K(x,t):=0,\quad (x,t)\in\Sigma.$$
Applying \eqref{nll1a}, one can check that $Z:=(g,z_0)\in\mathcal X$ satisfies
\bel{compa}\partial_tg(x,0)-\Delta_x z_0(x)+A(x,0)\cdot\nabla_xz_0(x)+q(x,0)z_0(x)-R(x,0)=\partial_tg(x,0)-\Delta_x z_0(x)=0,\quad x\in\partial\Omega.\ee
Moreover, $z$ solves the IBVP
\begin{equation}\label{nll1c}\left\{\begin{array}{ll}\partial_tz-\Delta_x z+A(x,t)\cdot\nabla_xz+q(x,t)z=R(x,t),\quad &\textrm{in}\ Q,\\  z=Z,\quad &\textrm{on}\ \Sigma_p.\end{array}\right.\end{equation}
Using the fact that $A,q, R\in \mathcal C^{\alpha,\frac{\alpha}{2}}(\overline{Q})$, $Z\in \mathcal X$ and the fact that \eqref{compa} is fulfilled, we deduce from \cite[Theorem 5.3, pp. 320-321]{LSU} that $\partial_tu_{F,G}\in \mathcal C^{2+\alpha,1+\frac{\alpha}{2}}(\overline{Q})$. \end{proof}

Armed with this lemma we will complete the proof of Theorem \ref{tnl3} and \ref{tnl1}.\\
\ \\
\textbf{Proof of Theorem \ref{tnl3}.} For $j=1,2$,  $a\in\R$, $v\in\R^n$, $(x,t)\in Q$, we fix
$$A_{j,a,v}(x,t):= \partial_vF_j(x,t,u_{F_j,h_{a,v}}(x,t),\nabla_xu_{F_j,h_{a,v}}(x,t)),\quad q_{j,a,v}(x,t):= \partial_uF_j(x,t,u_{F_j,h_{a,v}}(x,t),\nabla_xu_{F_j,h_{a,v}}(x,t))$$
and $B_{1,a,v}=0$.
According to \eqref{tnl3a} and Proposition \ref{pS1}, we have
$$\Lambda_{A_{1,v},B_{1,v}, q_{1,v}}H_{|\Sigma}=\Lambda_{A_{2,v},B_{1,v}, q_{2,v}}H_{|\Sigma},\quad a\in\R,\ v\in\R^n,\ H\in\mathcal X_0.$$
Combining this with the density of $\{H_{|\Sigma}:\ H\in \mathcal X_0\}$ in $\mathcal H_+$, we obtain
\bel{tnl3f} \Lambda_{A_{1,a,v},B_{1,a,v}, q_{1,a,v}}=\Lambda_{A_{2,a,v},B_{1,a,v}, q_{2,a,v}},\quad a\in\R,\ v\in\R^n\ee
and, in view of \eqref{tnl1aa} and Lemma \ref{nll1}, we have 
$$A_{j,a,v}\in \mathcal C^1(\overline{Q}),\quad q_{j,a,v}\in L^\infty(Q),\quad j=1,2,\  a\in\R,\ v\in\R^n.$$
Note that, due to \eqref{tnl1aa}, the fact that $\partial_t h_{a,v}=0$ and the fact that $\nabla_xh_{a,v}=v$, here we are actually in position to apply Lemma \ref{nll1}.
Moreover, according to \cite[Lemma 8.2]{I4}, \eqref{tnl3f} implies
\bel{tnl3c}A_{1,a,v}(x,t)=A_{2,a,v}(x,t),\quad (x,t)\in\Sigma,\ a\in\R, v\in\R^n.\ee
Therefore, Theorem \ref{t5}  implies that, for $A_{a,v}=A_{1,a,v}-A_{2,a,v}$ extended by zero to $\R^n\times(0,T)$ and for
$$\phi_{a,v}(x,t):=-\int_0^1\frac{A_{a,v}(sx,t)\cdot x}{2}ds,\quad (x,t)\in\R^n\times\R,\ (a,v)\in\R\times\R^n,$$
we have
\bel{tnl3d}A_{2,a,v}(x,t)=A_{1,a,v}(x,t)+2\nabla_x\phi_{a,v}(x,t),\quad (x,t)\in \R^n\times(0,T).\ee
In view of \eqref{tnl3c}, the fact  that $F_j\in C^{2+\alpha,1+\frac{\alpha}{2}}(\overline{Q};\mathcal C^3(\mathbb{R} \times\R^n))$ and the definition of $A_{j,a,v}$, $j=1,2$, one can easily
check that $(x,t,a,v)\mapsto A_{a,v}(x,t)\in \mathcal C^1(\overline{Q};\mathcal C(\R\times\R^n))$. Therefore, we have $$\phi:(x,t,a,v)\mapsto \phi_{a,v}(x,t)\in \mathcal C^1([0,T];\mathcal C(\overline{\Omega}\times\R\times\R^n))\cap\mathcal C^2(\overline{\Omega};\mathcal C([0,T]\times\R\times\R^n)). $$ In a similar way to the end of the proof of Theorem \ref{t5}, by eventually subtracting to $\phi$ a function $y$ depending only on $(t,a,v)\in(0,T)\times\R\times\R^n$, we may assume that
\bel{tnl3e}\phi(x,t,a,v)=0,\quad (x,t)\in \Sigma,\ (a,v)\in\R\times\R^n.\ee
Therefore, we can apply the gauge invariance of the DN map $\Lambda_{A_{1,a,v},B_{1,a,v}, q_{1,a,v}}$ to get
$$\Lambda_{A_{1,a,v},B_{1,a,v}, q_{1,a,v}}=\Lambda_{A_{2,a,v},B_{1,a,v}+\nabla_x\phi(\cdot,a,v),q_{1,a,v}+[-\pd_t\phi(a,v,+\Delta_x\phi-|\nabla_x\phi|^2-A_{1,a,v}\cdot\nabla_x\phi](\cdot,a,v)}.$$
Combining this with \eqref{tnl3f}, we get
$$\Lambda_{A_{2,a,v},B_{1,a,v}, q_{2,a,v}}=\Lambda_{A_{2,a,v},B_{1,a,v}+\nabla_x\phi(\cdot,a,v),q_{1,a,v}+[-\pd_t\phi(a,v,+\Delta_x\phi-|\nabla_x\phi|^2-A_{1,a,v}\cdot\nabla_x\phi](\cdot,a,v)}.$$
Using \eqref{tnl3c} and repeating the arguments used at the end of the proof of Theorem \ref{t5} we deduce that, for all $(a,v)\in\R\times\R^n$, we have
$$\left\{\begin{array}{ll}A_{2,a,v}(x,t)=A_{1,a,v}(x,t)+2\nabla_x\phi(x,t,a,v),\  &(x,t)\in Q,\\  q_{2,a,v}(x,t)=q_{1,a,v}(x,t)+[-\pd_t\phi(a,v,+\Delta_x\phi-|\nabla_x\phi|^2-A_{1,a,v}\cdot\nabla_x\phi](x,t,a,v),\quad &(x,t)\in Q,\\ \phi(x,t,a,v)=0,\quad &(x,t)\in \Sigma.\end{array}\right.$$
Sending $t\to0$ in the first two above equality, for all $(a,v)\in\R\times\R^n$, we obtain
$$\left\{\begin{array}{ll}\partial_v(F_2-F_1)(x,0,x\cdot v+a,v)=2\partial_x\phi(x,0,a,v),\  &x\in\Omega,\\  \partial_u(F_2-F_1)(x,0,x\cdot v+a,v)=-(\pd_t\phi-|\nabla_x\phi|^2-\partial_vF_1(x,0,x\cdot v+a,v)\partial_x\phi)(x,0,a,v),\  &x\in\Omega,\\ \phi(x,t,a,v)=0,\ &\ (x,t)\in \Sigma.\end{array}\right.$$
Finally, fixing $a=u-x\cdot v$ in the two first above equalities, we obtain \eqref{tnl3b}. This completes the proof of the theorem.\qed \\
\ \\
\ \\
\textbf{Proof of Theorem \ref{tnl1}.} For $j=1,2$,  $v\in\R^n$, $(x,t)\in Q$, we fix
$$A_{j,v}(x,t):= \partial_vF_j(x,t,u_{F_j,k_v}(x,t),\nabla_xu_{F_j,k_v}(x,t)),\quad q_{j,v}(x,t):= \partial_uF_j(x,t,u_{F_j,k_v}(x,t),\nabla_xu_{F_j,k_v}(x,t))$$
and $B_{j,v}=0$.
In a similar way to Theorem \ref{tnl3}, applying \eqref{tnl1b} and Proposition \ref{pS1}, we obtain
\bel{tnll1} \Lambda_{A_{1,v},B_{1,v}, q_{1,v}}=\Lambda_{A_{2,v},B_{1,v}, q_{2,v}},\quad v\in\R^n\ee
and, in view of \eqref{tnl1aa} and Lemma \ref{nll1}, we have 
$$A_{j,v}\in W^{1,\infty}(Q),\quad q_{j,v}\in L^\infty(Q),\quad j=1,2,\ v\in\R^n.$$
Note that, due to \eqref{tnl1aa}, the fact that $\partial_t k_v=0$ and the fact that $\nabla_xk_v=v$, here we are actually in position to apply Lemma \ref{nll1}.
Moreover, according to \cite[Lemma 8.2]{I4}, \eqref{tnll1} implies
$$A_{1,v}(x,t)=A_{2,v}(x,t),\quad (x,t)\in\Sigma,\ v\in\R^n.$$
In addition, from \eqref{tnl1a}-\eqref{tnl2a}, we deduce $(x,t,u,v)\mapsto \partial_uF_j(x,t,u,v)$, $j=1,2$, is a  function independent of $u$ and $v$ with
$$\partial_uF_1(x,t,0,0)=\partial_uF_1(x,t,u,v)=\partial_uF_2(x,t,u,v)=\partial_uF_2(x,t,0,0),\quad (x,t)\in Q,\ u\in\R,\ v\in\R^n.$$
It follows
\bel{tnll4}\begin{aligned}q_{1,v}(x,t)&= \partial_uF_1(x,t,u_{F_1,k_v}(x,t),\nabla_xu_{F_1,k_v}(x,t))\\
\ &=\partial_uF_1(x,t,0,0)\\
\ &=\partial_uF_2(x,t,0,0)=\partial_uF_2(x,t,u_{F_1,k_v}(x,t),\nabla_xu_{F_1,k_v}(x,t))=q_{2,v}(x,t),\quad (x,t)\in Q,\ v\in\R^n.\end{aligned}\ee
Thus, applying Corollary \ref{c1}, we deduce that
$$A_{1,v}(x,t)=A_{2,v}(x,t),\quad (x,t)\in Q,\ v\in\R^n.$$
Sending $t\to0$  in this formula, we obtain 
\bel{tnll5}F_1(x,0,x\cdot v,v)=F_2(x,0,x\cdot v,v),\quad x\in\Omega,\ v\in\R^n.\ee
On the other hand, according to \eqref{tnl1a}-\eqref{tnl2a} we have
$$F_j(x,t,u,v)=F_j(x,t,0,v)+\partial_uF_j(x,t,0,v)u=F_j(x,t,0,v)+\partial_uF_1(x,t,0,v)u$$
and \eqref{tnll5} clearly implies \eqref{tnl1c}. Assuming that \eqref{tnl1e} is fulfilled, we can easily deduce \eqref{tnl1f} from \eqref{tnl1c}. This completes the proof of the theorem.\qed

Now let us consider Corollary \ref{cnl2} and \ref{cnl} which follow from  Theorem \ref{tnl3} and \ref{tnl1}.\\

\textbf{Proof of Corollary \ref{cnl2}.} Let condition \eqref{tnl3a} be fulfilled. Then Theorem \ref{tnl3} implies that there exists $\phi:Q\times\R\times\R^n\ni (x,t,u,v)\mapsto\phi(x,t,u,v)\in\mathcal C^1([0,T];\mathcal C(\overline{\Omega}\times\R\times\R^n))\cap\mathcal C^2(\overline{\Omega};\mathcal C([0,T]\times\R\times\R^n))$ such that, for all $(u,v)\in \R\times\R^n$, conditions \eqref{tnl3b} are fulfilled. Note that, for all $x\in\Omega$, $(u,v)\in \R\times\R^n$, we have
$$2\Delta_x\phi(x,0,u-x\cdot v,v)=\nabla_x\cdot[2\partial_x\phi(x,0,u-x\cdot v,v)]+2\partial_u\partial_x\phi(x,0,u-x\cdot v,v)v.$$
Then, \eqref{tnl3b} implies
$$2\Delta_x\phi(x,0,u-x\cdot v,v)=\sum_{j=1}^n\left[\partial_{x_j}\partial_{v_j}(F_2-F_1)(x,0,u,v)+\partial_{u}\partial_{v_j}(F_2-F_1)(x,0,u,v)v_j\right].$$
Applying \eqref{cnl2a}, we get
$$\Delta_x\phi(x,0,u-x\cdot v,v)=0,\quad x\in\Omega,\  (u,v)\in \R\times\R^n$$
and, replacing $u$ by $u+x\cdot v$ and applying \eqref{tnl3b}, we find 
$$\left\{\begin{aligned}\Delta_x\phi(x,0,u,v)&=0,\quad x\in\Omega,\  (u,v)\in \R\times\R^n\\ \phi(x,0,u,v)&=0,\quad x\in\partial \Omega,\  (u,v)\in \R\times\R^n.\end{aligned}\right.$$
From the uniqueness of this boundary value problem, we obtain
$$\phi(x,0,u,v)=0,\quad x\in\Omega,\  (u,v)\in \R\times\R^n,$$
which, combined with \eqref{tnl3b}, implies \eqref{tnl1c}. In addition, assuming that \eqref{tnl1e} is fulfilled, we can easily deduce \eqref{tnl1f} from \eqref{tnl1c}.\qed\\
\ \\
\textbf{Proof of Corollary \ref{cnl}.} In a similar way to Theorem \ref{tnl1}, for $j=1,2$, $v\in\R^n$, $(x,t)\in Q$, we fix
$$A_{j,v}(x,t):= \partial_vF_j(x,t,u_{F_j,k_v}(x,t),\nabla_xu_{F_j,k_v}(x,t)),\quad q_{j,v}(x,t):= \partial_uF_j(x,t,u_{F_j,k_v}(x,t),\nabla_xu_{F_j,k_v}(x,t)).$$
Applying \eqref{tnl1a} we deduce \eqref{tnll4} and from \eqref{cnl1a} we obtain that
$$A_{1,v}(x,t)=0=A_{2,v}(x,t),\quad (x,t)\in \Omega_*\times(0,T),\ v\in\R^n.$$
Combining this with Corollary \ref{c3}, we deduce that  there exists $\phi_v\in L^\infty(0,T;W^{2,\infty}(\Omega))\cap W^{1,\infty}(0,T;L^\infty(\Omega))$ such that
$$\left\{\begin{array}{ll}A_{2,v}=A_{1,v}+2\nabla_x\phi_{v},\quad &\textrm{in}\ Q,\\  q_{2,v}=q_{1,v}-\pd_t\phi_v+\Delta_x\phi_v-|\nabla_x\phi_v|^2-A_{1,v}\cdot\nabla_x\phi_v,\quad &\textrm{in}\ Q,\\ \phi_v=\pd_\nu\phi_v=0,\quad &\textrm{on}\ \Sigma.\end{array}\right.$$
Then, using \eqref{tnll4}, we deduce that $\phi_v$ satisfies
$$\left\{\begin{array}{ll} \pd_t\phi_v-\Delta_x\phi_v+A_{3,v}\cdot\nabla_x\phi_v=0,\quad &\textrm{in}\ Q, \\ \phi_v=\pd_\nu\phi_v=0,\quad &\textrm{on}\ \Sigma,\end{array}\right.$$
with $A_{3,v}=A_{1,v}+\nabla_x\phi_{v}\in L^\infty(Q)$.
 Thus, from the unique continuation properties for parabolic equations, we deduce that  $\phi_v=0$ and
$$A_{1,v}(x,t)=A_{2,v}(x,t),\quad (x,t)\in Q,\ v\in\R^n.$$
Then in a similar way to the end of the proof of Theorem \ref{tnl1} we deduce \eqref{tnl1c}.\qed

\section*{Acknowledgements}
PC was supported by MTM2018, Severo Ochoa SEV-2017-0718 and BERC 2018-2021. YK would like to thank Luc Robbiano    for helpfull remarks about unique continuation properties of solutions of parabolic equations that allow to improve several results of this paper. The work of YK is supported by  the French National
Research Agency ANR (project MultiOnde) grant ANR-17-CE40-0029.

\end{document}